\DeclareFontFamily{OT1}{rsfs}{}
\DeclareFontShape{OT1}{rsfs}{n}{it}{<-> rsfs10}{}
\DeclareMathAlphabet{\mathscr}{OT1}{rsfs}{n}{it}
\DeclareMathOperator{\Ext}{Ext}
\DeclareMathOperator{\Hom}{Hom}
\DeclareMathOperator{\Rep}{Rep}
\DeclareMathOperator{\GL}{GL}
\DeclareMathOperator{\SL}{SL}
\DeclareMathOperator{\G}{G}
\DeclareMathOperator{\St}{St}
\DeclareMathOperator{\cind}{c-ind}
\DeclareMathOperator{\Ind}{Ind}
\DeclareMathOperator{\csupp}{csupp}
\DeclareMathOperator{\Irr}{Irr}
\DeclareMathOperator{\cosoc}{cosoc}
\newcommand{\csuppline}{{\csupp_{\mathbb{Z}}}}
\newcommand{\A}{{\mathcal{A}}}
\newtheorem{theorem}{Theorem}[section]
\newtheorem{proposition}{Proposition}[section]
\newtheorem{corollary}{Corollary}[section]
\newtheorem{example}{Example}[section]
\theoremstyle{definition}
\newtheorem{remark}{Remark}[section]
\theoremstyle{definition}
\newtheorem{lemma}{Lemma}[section]
\newtheorem{definition}{Definition}[section]
\theoremstyle{definition}
\theoremstyle{definition}
\numberwithin{equation}{section}
\begin{document}

\title[Ext embedding theorem for $p$-adic GL]{An inductive Ext non-vanishing theorem for the $p$-adic general linear group}

\author{Kei Yuen Chan and Mohammed Saad Qadri}
\date{}

\maketitle

\begin{abstract}
We study some homological properties of the parabolic induction functor for the $p$-adic general linear group. We obtain an embedding theorem of Ext-groups in the context of parabolic induction. As an application, we establish and prove a variation of the non-tempered Gan-Gross-Prasad conjecture in homological branching laws for $p$-adic general linear groups.
\end{abstract}

\section{Introduction}

Let $F$ be a non-archimedean local field. The category of smooth representations of the $p$-adic general linear group is not a semi-simple category and so one is naturally interested in the study of homological aspects of the representation theory of $\GL_n(F)$. The two main goals of this article are:
\begin{enumerate}
    \item To study the relationship between $\Ext$ modules and the parabolic induction functor (see Theorem \ref{theorem on injective mapping on ext modules}). 
    \item To study some homological aspects of the restriction problem for the $p$-adic general linear group as initiated in \cite{pr18} (see Theorem \ref{theorem on non tempered ext branching}). 
\end{enumerate} As we shall see a study of the former shall shed some light on the latter question. In what follows, we refer to the body of the paper for undefined notations and terminology.

\subsection{Extensions and Parabolic Induction}

The parabolic induction functor plays a central role in the representation theory of reductive $p$-adic groups. It is an important tool in the construction of representations of $\GL_n(F)$. Some questions such as the study of irreducibility under parabolic induction and the study of composition factors of parabolically induced modules have attracted much attention. On the other hand, the behavior of extensions under parabolic induction seems to be less well studied. In this article, we study some homological properties of parabolic induction functor. We first give some motivation for our study.

As the following example \cite{ze80} illustrates, it is known that some nontrivial extensions may be trivialized under parabolic inductions and so it is interesting to understand the behavior of extensions under parabolic induction. 

\begin{example}
Consider the following short exact sequence of representations of $\GL_2(F)$ which is non-split: \[ 0 \rightarrow \mathbbm{1}_2 \rightarrow \nu^{-1/2}\times \nu^{1/2} \rightarrow \St_2 \rightarrow 0. \] However, the short exact sequence, \[ 0 \rightarrow \mathbbm{1}_2 \times \nu^{-1/2} \rightarrow \nu^{-1/2}\times \nu^{1/2} \times \nu^{-1/2} \rightarrow \St_2 \times \nu^{-1/2} \rightarrow 0, \] of representations of $\GL_3(F)$ splits, that is, $\nu^{-1/2}\times \nu^{1/2} \times \nu^{-1/2}$ is decomposable.
\end{example}

The above example also illustrates that one must impose some conditions on cuspidal support in order to hope that extensions are preserved under the parabolic induction functor.

Another motivation for the study of the behavior of extensions under parabolic induction comes from the work \cite{ch22} of the first author on the proof of the local non-tempererd Gan-Gross-Prasad conjecture for $\GL_n(F)$. The following result plays a crucial role in the proof of the non-tempered GGP conjecture for the $p$-adic general linear group by the first author.

Let $\omega \in \Rep(\GL_k(F))$ be a Speh representation built out of a discrete series. Let $\mathcal C_{\omega}$ denote the full subcategory of $\Rep(\GL_m(F))$ of finite-length representations $\sigma$ and such that for any composition factor $\sigma^{\prime}$ of $\sigma$ and for any $\rho \in \csupp(\sigma^{\prime})$, either: \begin{enumerate}
    \item $\rho \in \csupp(\omega)$.
    \item $\rho \not \in (\csupp_\mathbb{Z}(\omega) - \csupp(\omega))$.
\end{enumerate}

\begin{theorem}\cite[Theorem 8.1]{ch22}\cite{ch24product} \label{theorem on fully faithful}
Let $\pi\in \mathcal C_{\omega}$ be a smooth representation of $\GL_m(F)$ with length equal to 2. Then $\pi$ is indecomposable if and only if $\omega\times \pi$ is indecomposable.
\end{theorem}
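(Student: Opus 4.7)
The direction $(\Leftarrow)$ is immediate from the additivity of parabolic induction: if $\pi = \pi_1 \oplus \pi_2$ is a nontrivial decomposition, then $\omega \times \pi = (\omega \times \pi_1) \oplus (\omega \times \pi_2)$ is a nontrivial decomposition as well, since parabolic induction of a nonzero smooth representation is nonzero. So only the direction $(\Rightarrow)$ needs work.

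Assume then that $\pi$ is indecomposable of length two. Then $\pi$ fits into a non-split short exact sequence
$$ 0 \to \sigma_1 \to \pi \to \sigma_2 \to 0, $$
with $\sigma_1,\sigma_2$ irreducible, and both lying in $\mathcal C_\omega$ since the category is closed under subquotients. The Yoneda class $[\pi] \in \Ext^1(\sigma_2,\sigma_1)$ is nonzero. The plan is to invoke the Ext embedding theorem (Theorem~\ref{theorem on injective mapping on ext modules}) in degree one to obtain an injection
$$ \Ext^1(\sigma_2,\sigma_1) \hookrightarrow \Ext^1(\omega \times \sigma_2, \omega \times \sigma_1), $$
and to note, via exactness and functoriality of $\omega \times (-)$, that this injection sends $[\pi]$ to the class of
$$ 0 \to \omega \times \sigma_1 \to \omega \times \pi \to \omega \times \sigma_2 \to 0. $$
Consequently this short exact sequence does not split.

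To upgrade non-splitting to indecomposability, I would apply the degree zero case of Theorem~\ref{theorem on injective mapping on ext modules} to the pairs $(\sigma_i,\sigma_j)$ to get injections
$$ \Hom(\sigma_i,\sigma_j) \hookrightarrow \Hom(\omega \times \sigma_i, \omega \times \sigma_j), $$
and then analyze an arbitrary idempotent $e \in \End(\omega \times \pi)$. The argument has two steps: first, show that $e$ respects the filtration $\omega \times \sigma_1 \subseteq \omega \times \pi$, using that the Hom embeddings together with the cuspidal support hypothesis defining $\mathcal C_\omega$ rule out nontrivial maps between certain composition factors of $\omega \times \sigma_1$ and $\omega \times \sigma_2$; second, use the Hom injections to identify the induced pair $(e_1,e_2) \in \End(\omega \times \sigma_1) \times \End(\omega \times \sigma_2)$ with a pair of endomorphisms in the image of $\End(\sigma_1) \times \End(\sigma_2)$, so that $e$ descends to an idempotent endomorphism of $\pi$. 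Since $\pi$ is indecomposable, this idempotent is $0$ or $\mathrm{id}$, and the non-splitting already established forces the same for $e$.

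The main obstacle is the first step of this idempotent analysis, namely verifying that $e$ necessarily preserves the filtration. This is straightforward when $\sigma_1 \not\cong \sigma_2$ and the irreducible constituents of $\omega \times \sigma_1$ and $\omega \times \sigma_2$ are disjoint, but it is more delicate when $\sigma_1 \cong \sigma_2$: then $\End(\pi)$ is two-dimensional with a nilpotent part, and \emph{a priori} an idempotent of $\omega \times \pi$ could mix the two filtration pieces. Controlling this mixing is exactly where the cuspidal support condition in the definition of $\mathcal C_\omega$ — in particular clause (2) excluding $\rho \in \csupp_{\mathbb Z}(\omega) \setminus \csupp(\omega)$ — should enter, ensuring that no new isomorphisms between constituents of $\omega \times \sigma_1$ and of $\omega \times \sigma_2$ are created by parabolic induction.
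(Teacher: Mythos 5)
There is a genuine gap, and it sits exactly where you flag it. Your reduction produces only the non-splitting of
\[ 0 \rightarrow \omega \times \sigma_1 \rightarrow \omega \times \pi \rightarrow \omega \times \sigma_2 \rightarrow 0, \]
but $\omega \times \sigma_1$ and $\omega \times \sigma_2$ need not be irreducible, so $\omega\times\pi$ can have length greater than $2$, and a module can perfectly well be decomposable while sitting in a non-split extension of reducible pieces (take $A=A_1\oplus A_2$ and $M=E\oplus A_2$ with $E$ a non-split extension of $B$ by $A_1$). So the $\Ext^1$-injectivity step, even granted, does not give indecomposability, and your "upgrade" via idempotents is the whole content of the theorem. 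That step cannot be carried out with the tools you allow yourself: knowing that $\Hom(\sigma_i,\sigma_j)\hookrightarrow \Hom(\omega\times\sigma_i,\omega\times\sigma_j)$ is injective says nothing about whether an idempotent $e\in \mathrm{End}(\omega\times\pi)$ preserves the filtration or whether the induced endomorphisms of $\omega\times\sigma_i$ lie in the image of $\mathrm{End}(\sigma_i)$; for that you need \emph{surjectivity} (fullness) of the maps $\Hom(\tau_1,\tau_2)\to\Hom(\omega\times\tau_1,\omega\times\tau_2)$. Your claim that the cuspidal support condition "rules out nontrivial maps between composition factors of $\omega\times\sigma_1$ and $\omega\times\sigma_2$" is unsubstantiated and is false in the most delicate case $\sigma_1\cong\sigma_2$, where the two pieces have identical constituents.

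The result is in fact an immediate consequence of the full-faithfulness theorem that the paper cites (Theorem \ref{theorem on ff functor for finite length}, i.e.\ \cite[Theorem 1.2]{ch24product}, and \cite[Theorem 9.1]{ch22}): applying the isomorphism $\Hom(\tau_1,\tau_2)\cong\Hom(\omega\times\tau_1,\omega\times\tau_2)$ with $\tau_1=\tau_2=\pi$ gives an algebra isomorphism $\mathrm{End}(\pi)\cong \mathrm{End}(\omega\times\pi)$, so idempotents correspond bijectively and indecomposability transfers in both directions at once. Note also that your route is backwards relative to the logical structure of the paper: Theorem \ref{theorem on injective mapping on ext modules} is proved here \emph{using} that full-faithfulness result (via Theorem \ref{theorem on fully faithful functor for infinte length}), so invoking the Ext embedding theorem to prove the present statement is a long detour which, once you supply the fullness needed to finish your idempotent analysis, collapses to the standard one-line argument anyway.
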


The above result for first $\Ext$ modules suffices for the purpose of understanding the $\Hom$ branching law. Since we are interested in the study of $\Ext$ branching laws, one has to study the behavior of the higher $\Ext$ modules under parabolic induction. We obtain the following result on the behavior of $\Ext$ modules under parabolic induction, which is the first main result of this article.

\begin{theorem} \label{theorem on injective mapping on ext modules}
Let $i$ be a non-negative integer. Let $\tau_1$ and $\tau_2$ belong to the category $\mathcal C_\omega$. Then there is an injective map $\phi$,
 \[ \phi: \mathrm{Ext}_{\GL_m(F)}^i(\tau_1, \tau_2)\hookrightarrow \mathrm{Ext}_{\GL_{m+k}(F)}^i(\omega \times \tau_1, \omega \times \tau_2). \] 
\end{theorem}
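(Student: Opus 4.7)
The plan is to transport the right-hand $\Ext$-group to the Levi $M = \GL_k(F) \times \GL_m(F)$ via Bernstein's second adjointness, analyze the resulting Jacquet module through the Bernstein-Zelevinsky geometric lemma, and exploit the cuspidal-support hypothesis to isolate the contribution of the leading term, where $\phi$ is identified with a manifestly injective Künneth map.

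Let $i_P$ denote the (normalized) parabolic induction from the standard Levi $M$ and $r_{\bar P}$ the exact Jacquet functor of the opposite parabolic. Exactness of $i_P$ ensures that $\phi$ is well-defined, and exactness of $r_{\bar P}$ lets second adjointness pass to the derived category, yielding
\[ \Ext^i_{\GL_{m+k}(F)}(\omega \times \tau_1, \omega \times \tau_2) \;\cong\; \Ext^i_M\bigl(\omega \boxtimes \tau_1,\, r_{\bar P}(\omega \times \tau_2)\bigr). \]
Under this identification, $\phi$ becomes the composite of the Künneth map $\alpha \mapsto \mathrm{id}_\omega \boxtimes \alpha$, from $\Ext^i(\tau_1, \tau_2)$ into $\Ext^i_M(\omega \boxtimes \tau_1, \omega \boxtimes \tau_2)$, with pushforward along the adjunction unit $\eta : \omega \boxtimes \tau_2 \hookrightarrow r_{\bar P}(\omega \times \tau_2)$ (which is injective by the triangle identities).

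Next I would invoke the geometric lemma to write $r_{\bar P} i_P(\omega \boxtimes \tau_2)$ as a finite iterated extension with subquotients indexed by $W_M \backslash W / W_M$: the identity double coset contributes the leading subobject $\omega \boxtimes \tau_2$ (realised precisely by $\eta$), while each non-identity coset contributes a subquotient $N$ built from proper Jacquet modules of $\omega$ and $\tau_2$ glued along a non-trivial shuffle of the two $\GL$-factors. The heart of the argument is to prove that $\Ext^*_M(\omega \boxtimes \tau_1, N) = 0$ in all degrees for every non-identity subquotient $N$. This should reduce to a Bernstein-component computation: a non-trivial shuffle transfers some cuspidal datum of $\omega$ into the $\GL_m(F)$-component and some cuspidal datum of $\tau_2$ into the $\GL_k(F)$-component, and the hypothesis $\tau_1, \tau_2 \in \mathcal C_\omega$ (which forbids cuspidal constituents in $\csuppline(\omega) \setminus \csupp(\omega)$) should force either the $\GL_k(F)$-component or the $\GL_m(F)$-component of $N$ into a Bernstein block distinct from the corresponding component of $\omega \boxtimes \tau_1$, annihilating every $\Ext$-group across those blocks.

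Granted this vanishing, iterating the long exact sequences along the filtration collapses the target to $\Ext^i_M(\omega \boxtimes \tau_1, \omega \boxtimes \tau_2)$, which by the Künneth formula for smooth representations of a product of $p$-adic groups decomposes as
\[ \bigoplus_{j+l=i} \Ext^j_{\GL_k(F)}(\omega, \omega) \otimes \Ext^l_{\GL_m(F)}(\tau_1, \tau_2). \]
Since $\omega$ is irreducible, $\Ext^0(\omega, \omega) = \mathbb C$, so the map $\alpha \mapsto \mathrm{id}_\omega \boxtimes \alpha$ realises $\Ext^i(\tau_1, \tau_2)$ as the $(j,l) = (0,i)$ summand and is therefore injective; composing with the isomorphism induced by $\eta$ yields the required injectivity of $\phi$. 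The principal obstacle I anticipate is the cuspidal-support bookkeeping for the non-identity shuffles of the geometric lemma: while the Bernstein decomposition reduces vanishing to a comparison of inertial equivalence classes, a careful combinatorial argument is needed to certify that $\mathcal C_\omega$ rules out every non-identity double coset, and this is precisely the point where the hypothesis enters essentially. The case $i = 1$ recovers the Ext-interpretation of Theorem \ref{theorem on fully faithful}, so one might also try to bootstrap from there by dimension shifting, using that $\omega \times -$ preserves projectives (since its right adjoint $r_{\bar P}$ is exact); however, I expect that tracking the cokernels of projective resolutions along parabolic induction will eventually demand the same geometric-lemma analysis.
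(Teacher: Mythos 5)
Your reduction via second adjointness and the geometric lemma is fine up to the point where everything hinges on the vanishing claim $\Ext^*_M(\omega \boxtimes \tau_1, N)=0$ for every non-identity shuffle term $N$, and that claim is false under the stated hypotheses. You argue that membership in $\mathcal C_\omega$ ``forbids cuspidal constituents in $\csuppline(\omega)\setminus\csupp(\omega)$'' and hence forces the shuffled pieces into different Bernstein blocks; but the definition of $\mathcal C_\omega$ explicitly \emph{allows} (indeed is designed for) the case $\csupp(\tau_j)\subseteq\csupp(\omega)$, which is exactly the case of interest for the Arthur-type applications. In that case a non-trivial shuffle moves cuspidal data of $\tau_2$ into the $\GL_k$-slot and cuspidal data of $\omega$ into the $\GL_m$-slot \emph{within the same inertial classes}, so no block separation occurs. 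A minimal example: $\omega=\tau_1=\tau_2=\mathbbm{1}_1$ (all in $\mathcal C_\omega$); the non-identity term of $r_{\bar P}(\mathbbm{1}\times\mathbbm{1})$ is again $\mathbbm{1}\boxtimes\mathbbm{1}$ and has plenty of $\Ext$ against $\omega\boxtimes\tau_1$. The same happens for $\omega=\mathbbm{1}_5$, $\tau_1=\mathbbm{1}_5$, $\tau_2=\St_5$, which the paper's non-surjectivity remark shows is squarely inside the intended scope. Once the vanishing fails, the filtration from the geometric lemma only gives a spectral sequence (or iterated long exact sequences), and the map induced by the unit $\eta:\omega\boxtimes\tau_2\hookrightarrow r_{\bar P}(\omega\times\tau_2)$ on $\Ext^i$ need not be injective: classes in the bottom piece can be killed by connecting maps coming from the other subquotients. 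So the heart of the argument is missing precisely in the essential case, and controlling those differentials is the actual difficulty of the theorem. (A side error: $\omega\times-$ does not preserve projectives in general; its right adjoint is $\Hom_{\GL_k}(\omega, r_{\bar P}(-))$-type and is not exact, so the proposed dimension-shifting bootstrap also does not get off the ground.)

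For comparison, the paper avoids the geometric lemma entirely: it transfers the problem to modules over the type-theoretic affine Hecke algebras, completes at the relevant central character (using the Nori--Prasad and Chan--Savin identifications $\Ext^i_{\mathcal H}(\pi_1,\pi_2)\cong\Ext^i_{\widehat{\mathcal C}(\mathcal J)}(\widehat\pi_1,\widehat\pi_2)$ under finite-dimensionality), extends the full faithfulness of the product functor $\omega\times-$ of \cite{ch24product} from finite-length objects to the completed category, and then proves the $\Ext$-embedding by an explicit cocycle construction on a product of projective resolutions of $\omega$ and $\tau_1$, where the full-faithfulness theorem is what rules out the candidate class being a coboundary. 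Note also the paper's Remark 1.6: full faithfulness of an exact embedding does not by itself yield injectivity on higher $\Ext$, so even granting Theorem \ref{theorem on fully faithful} one cannot shortcut the resolution-level argument; any repair of your approach would have to supply an input of comparable strength at the level of the shuffle filtration.
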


As a corollary of the above result, we see that if $\tau_1$ and $\tau_2$ belong to the category $\mathcal C_\omega$ then for $i \in \mathbb{Z}_{>0}$, $\mathrm{Ext}^i(\tau_1, \tau_2)\neq 0$ implies  $\mathrm{Ext}^i(\omega \times \tau_1, \omega \times \tau_2)\neq 0$, which justifies the term ``inductive" in the title. As noted earlier, it is natural to impose some conditions on the cuspidal support in order to hope that extensions are preserved under the parabolic induction functor. As producting with Speh representations is essential in the construction of Arthur type representations and unitary representations, this results should be useful in the study of Ext-groups of those representations. Indeed, we shall resolve a conjecture in \cite{qa25} about Arthur type representations (see Theorem \ref{theorem on non tempered ext branching} below).

\begin{remark}
One cannot expect that the map $\phi$ in the above theorem to be surjective in general. For instance, from the projective resolution for the trivial representation provided by the Bruhat-Tits building associated to $\GL_n(F)$ we know that, \[ \Ext^6_{\GL_5(F)}(\mathbbm{1}_5,\St_5) = 0. \] On the other hand, one can check that, \[ \Ext^6_{\GL_{10}(F)}(\mathbbm{1}_5 \times \mathbbm{1}_5,\mathbbm{1}_5\times \St_5) \neq 0. \]
\end{remark}

\begin{remark}
Let $\mathcal A$ be the abelian category containing precisely three simple non-zero vector spaces over $\mathbb C$, say $X, Y$ and $Z$. Suppose that the only non-semisimple objects in $\mathcal A$ are $P$ and $Q$, which admits the following short exact sequences respectively:
\[   0 \rightarrow Y \rightarrow P \rightarrow X \rightarrow 0 
\]
and
\[  0 \rightarrow Z \rightarrow Q \rightarrow Y \rightarrow 0 .
\]
Moreover, 
\[ \mathrm{Hom}_{\mathcal A}(X, X), \mathrm{Hom}_{\mathcal A}(Y, Y), \mathrm{Hom}_{\mathcal A}(Z, Z), \mathrm{Hom}_{\mathcal A}(P, P), \mathrm{Hom}_{\mathcal A}(Q, Q) 
\]
contains precisely only the scalar multiple maps. Note that $P$ and $Q$ are projective objects in $\mathcal A$. Let $\mathcal B$ be a category containing $\mathcal A$ as its full subcategory, and containing precisely one more object $R$ satisfying the following properties:
\begin{enumerate}
\item of length $3$;
\item indecomposable;
\item there is a monic from $Q$ to $R$;
\item there is an epic from $R$ to $P$.
\end{enumerate}
 We observe that $\mathrm{Ext}_{\mathcal A}^2(X, Z) \cong \mathbb C$ by applying the following projective resolution on $X$ in $\mathcal A$:
\[    0 \rightarrow   Z\rightarrow  Q \rightarrow  P  \rightarrow X \rightarrow 0
\]
On the other hand, $\mathrm{Ext}_{\mathcal B}^2(X, Z)=0$ by applying the following projective resolution in $\mathcal B$:
\[      0\rightarrow Q  \rightarrow R \rightarrow X \rightarrow 0 .
\]
This results in $\mathrm{Ext}_{\mathcal B}^2(X, Z)=0$. Thus, the natural embedding from $\mathcal A$ to $\mathcal B$ is a fully-faithful exact functor, but the induced map from $\mathrm{Ext}^2_{\mathcal A}(X,Z)$ to $\mathrm{Ext}^2_{\mathcal B}(X,Z)$ is not faithful. This example illustrates why Theorem \ref{theorem on injective mapping on ext modules}
cannot be directly inferred from Theorem \ref{theorem on fully faithful} or \cite[Theorem 9.1]{ch22}.

\end{remark}

\begin{remark}
We point out that the cuspidal support conditions appearing in the definition of the category $\mathcal C_{\omega}$ in Theorem \ref{theorem on injective mapping on ext modules} are motivated by the analysis of when the product of a cuspidal representation with $\omega$ is irreducible.     
\end{remark}

\begin{remark}
Since the category of smooth representations of $\GL_n(F)$ is not a semi-simple category, it is an interesting question to understand and describe the indecomposable representations (of length $\geq 2$). One may expect that Theorem \ref{theorem on injective mapping on ext modules} may prove useful in this regard.
\end{remark}

\begin{remark}
The category occurring in the statement of the fully faithful functor result in \cite[Theorem 1.2]{ch24product} is more general than the subcategory $\mathcal C_{\omega}$ defined above. Using the results of \cite{ch24product} it may be possible to generalize Theorem \ref{theorem on injective mapping on ext modules} to the case where $\omega \in \Rep(\GL_k(F))$ is any irreducible representation.
\end{remark}

\subsection{Homological Branching Law}
Another purpose of this article is to investigate some homological aspects of the restriction problem for the $p$-adic general linear group. We first review some of the progress in the study of the branching laws for the $p$-adic general linear group. Let $\pi$ and $\pi^{\prime}$ denote irreducible representations of $\GL_n(F)$ and $\GL_{n-1}(F)$ respectively.
\begin{enumerate}
\item The multiplicity one theorem for the space $\Hom_{\GL_{n-1}(F)}(\pi,\pi^{\prime})$ is established in \cite{agrs10} (see also \cite{ch23}).
\item The $\Hom$ branching law for generic representations is obtained in \cite{jpss83,pr93}.
\item The notion of $\Ext$ analogues of branching laws is proposed in the work \cite{pr18}. 
\item The $\Ext$ branching law for generic representations conjectured in \cite{pr18} is established in the work \cite{cs21} of Chan-Savin (see also \cite{cr23},\cite{wz25}).
\item In a recent paper \cite{ggp20}, Wee Teck Gan, Benedict Gross, and Dipendra Prasad introduce the non-tempered GGP conjectures for Arthur type representations for certain pairs of $p$-adic classical groups.
\item The non-tempered GGP conjecture for the $p$-adic general linear group is fully resolved by the first author in \cite{ch22}. 
\item The $\Hom$ branching law for arbitrary irreducible representations is determined in \cite{ch23generalbranchinglaw} (see also \cite{pa25}).
\end{enumerate}

The next main result of this article is motivated by the non-tempered Gan-Gross-Prasad conjecture for the $p$-adic general linear group. It is noted in \cite[Remark 5.8]{ggp20}, that is an intriguing question to identify the precise conditions under which $\Ext^i_{\GL_{n-1}(F)}(\pi,\pi^{\prime})$ is non-zero for some $i\geq 0$, when $\pi$ and $\pi^{\prime}$ are representations of Arthur type. This problem is not completely understood, with partial progress obtained in \cite{ch22, qa25}. In this paper, we build upon these previous results to provide sufficient conditions for $\Ext$ branching between Arthur-type representations, which is the second main result of the paper. 

Our $\Ext$ non-vanishing result is based on the notion of strong $\Ext$ relevance for a pair of Arthur type representations introduced in \cite{qa25}. We defer the precise definition of this notion to Section \ref{section -  definition of strong ext relevance} and only point out that the definition of strong $\Ext$ relevance is motivated by the original definition of GGP relevance in \cite{ggp20} and a duality theorem of Nori and Prasad \cite{np20} involving the Aubert-Zelevinsky involution. The second main result of this paper is the following result \cite[Conjecture 1.3]{qa25}.

\begin{theorem} \label{theorem on non tempered ext branching}
Let $\pi$ and $\pi^{\prime}$ be Arthur type representations of $\GL_n(F)$ and $\GL_{n-1}(F)$ respectively. If $\pi$ and $\pi^{\prime}$ are strong $\Ext$ relevant then, 
\[ \Ext^i_{\GL_{n-1}(F)}(\pi,\pi^{\prime})\neq 0 \] for some integer $i\geq 0$.
\end{theorem}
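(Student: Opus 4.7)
The plan is to proceed by induction on a complexity measure of the pair $(\pi,\pi')$, for instance the total number of Speh factors in the factorizations of $\pi$ and $\pi'$ (equivalently, the size of the $\mathbb{Z}$-cuspidal support), using the combinatorial structure coming from strong $\Ext$ relevance together with Theorem \ref{theorem on injective mapping on ext modules} as the main engine.

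The first task is to unpack the strong $\Ext$ relevance condition (Section \ref{section -  definition of strong ext relevance}). Following the parallel with GGP relevance \cite{ggp20} and the Nori-Prasad duality \cite{np20}, it provides a matching between (possibly Aubert-Zelevinsky dualized) Speh building blocks of $\pi$ and $\pi'$. One uses this matching to extract a Speh representation $\omega \in \Rep(\GL_k(F))$ together with decompositions $\pi \cong \omega \times \pi_1$ and $\pi' \cong \omega \times \pi_1'$ in which $(\pi_1,\pi_1')$ is again Arthur type, is still strong $\Ext$ relevant at smaller rank, and belongs to $\mathcal C_\omega$ so that Theorem \ref{theorem on injective mapping on ext modules} applies. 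When the matching is only visible after Aubert-Zelevinsky duality, the Nori-Prasad duality \cite{np20}, which exchanges $\Ext^i_{\GL_{n-1}(F)}(\pi,\pi')$ with $\Ext^{d-i}_{\GL_{n-1}(F)}(\widehat{\pi},\widehat{\pi}')$ up to a dimensional shift $d$, allows the peeling to be carried out on the dual side without losing information about $\Ext$ non-vanishing.

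The inductive step then requires upgrading the conclusion $\Ext^j_{\GL_{n-1-k}(F)}(\pi_1,\pi_1') \ne 0$ for some $j$ to the non-vanishing of $\Ext^i_{\GL_{n-1}(F)}(\omega \times \pi_1,\omega \times \pi_1')$ for some $i$. Since the branching problem compares representations of $\GL_n(F)$ and $\GL_{n-1}(F)$, Theorem \ref{theorem on injective mapping on ext modules} must be combined with a Bernstein-Zelevinsky / geometric-lemma analysis of the restriction from $\GL_n(F)$: the mirabolic filtration on $(\omega \times \pi_1)|_{\GL_{n-1}(F)}$ has subquotients built from Bernstein-Zelevinsky derivatives of $\omega$, which are themselves (up to twist) Speh representations. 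This reduces the branching $\Ext$ computation to $\Ext$ groups inside a single $\GL_{n-1}(F)$, at which point Theorem \ref{theorem on injective mapping on ext modules} injects the inductive non-vanishing forward.

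The induction terminates at a minimal strong $\Ext$ relevant pair; after the reductions this is effectively a generic pair (possibly via Aubert-Zelevinsky), so the $\Ext$ non-vanishing follows from the Chan-Savin generic $\Ext$ branching theorem \cite{cs21} (and from \cite{jpss83,pr93} when $i=0$). The main obstacle is the extraction step: showing that strong $\Ext$ relevance always supplies a common Speh $\omega$ satisfying both the cuspidal support constraints defining $\mathcal C_\omega$ and the inheritance of strong $\Ext$ relevance by the reduced pair $(\pi_1,\pi_1')$, and then controlling the associated Bernstein-Zelevinsky spectral sequence so that the inductively produced non-zero class survives. Theorem \ref{theorem on injective mapping on ext modules} is essential at precisely this last control step, and this is where the most care will be required.
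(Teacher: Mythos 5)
Your overall architecture (induct on the number of Speh factors, peel off a factor dictated by the strong $\Ext$ relevance matching, use Nori--Prasad duality when the matching is only visible after Aubert--Zelevinsky, feed the smaller case forward through Theorem \ref{theorem on injective mapping on ext modules}, terminate at a generic pair via \cite{pr93}/\cite{cs21}) is in the same spirit as the paper, but your central extraction step has a genuine gap. Strong $\Ext$ relevance never supplies a \emph{common} Speh factor $\omega$ with $\pi\cong\omega\times\pi_1$ and $\pi'\cong\omega\times\pi_1'$: by Definition \ref{definition of strong ext relevance} each matched pair of blocks differs by one in an $\SL_2$-factor, i.e.\ it pairs $u_\rho(a,b)$ in one representation with $u_\rho(a,b-1)$ (or its Aubert--Zelevinsky dual) in the other, so in general there is nothing equal to peel from both sides -- already for the basic relevant pair $(\mathbbm{1}_n,\mathbbm{1}_{n-1})$ no such $\omega$ exists. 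The paper's proof creates the common factor rather than finding it: after the transfer step (Lemma \ref{transfer lemma}, with an auxiliary cuspidal $\sigma$) one may assume the largest Speh factor $\pi_{1,1}=u_{\rho_1}(a_1,b_1)$ sits in $\pi$, and the key reduction lemma (Lemma \ref{lemma for the key reduction step}, a coarse Bernstein--Zelevinsky filtration argument from \cite{ch22}) rewrites the branching space as $\Ext^i_{\G_{n-1}}\bigl(\pi_{1,1}^-\times(\sigma\times\pi_1')|_{\G_a},\,\pi_2\bigr)$; only now does strong relevance guarantee that $\pi_2$ contains the factor $\pi_{1,1}^-$ (Case (A)) or $D(\pi_{1,1}^-)$ (Case (B), handled by Nori--Prasad duality), so that the product-functor machinery can be applied inside a single group $\G_{n-1}$. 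Your ``mirabolic filtration'' remark points in this direction, but you apply it to the already-peeled $\omega$ rather than using it as the device that produces the matching factor in the first place.

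A second, equally serious omission: after the reduction one of the two arguments is $(\sigma\times\pi_1')|_{\G_a}$, which has infinite length (it is only locally finitely generated by \cite{as20,cs21}), so Theorem \ref{theorem on injective mapping on ext modules} in its finite-length form $\mathcal C_\omega$ cannot be invoked directly, and neither can a naive spectral-sequence survival argument. The paper needs the whole completed-category apparatus of Sections \ref{section - full faithfulness on a completed category}--\ref{section - on extensions and parabolic induction}: the identification $\Ext^i_{\mathcal H}(\pi_1,\widehat{\pi_2})\cong\widehat{\mathcal Z}\otimes_{\mathcal Z}\Ext^i_{\mathcal H}(\pi_1,\pi_2)\cong\Ext^i_{\widehat{\mathcal C}(\mathcal J)}(\widehat{\pi}_1,\widehat{\pi}_2)$ via \cite[Proposition 5.2]{np20} (Proposition \ref{prop ext identify}, Corollary \ref{cor non-vanishing}), the embedding Theorem \ref{theorem on extensions and parabolic induction} in $\widehat{\mathcal C}(\mathcal J)$, and the compatibility of completion with parabolic induction (Lemma \ref{lemma on commutation of completion and induction}). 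Without addressing both the absence of a common factor and the infinite-length/completion issue, the proposed induction does not go through.
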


\begin{remark} We note that, in general, strong $\Ext$ relevance does not give as a necessary condition for $\Ext$ non-vanishing (see Page 6 of \cite{qa25}). Hence we only obtain one direction of the $\Ext$ branching law. Therefore the above result makes only partial progress towards an $\Ext$ analogue of the local non-tempered Gan-Gross-Prasad conjecture.
 
This work is written in the hope that it may be possible to identify a general condition controlling the $\Ext$ branching law for Arthur type representations, allowing one to obtain a complete $\Ext$ analogue of the local non-tempered Gan-Gross-Prasad conjecture for the $p$-adic general linear group (see also a recent related work \cite{clltz25}). \end{remark}

\begin{remark}
For the proof of the above result, it is crucial to study the relationship between extensions and parabolic induction and hence Theorem \ref{theorem on injective mapping on ext modules} shall play a crucial role. 

Moreover, given an irreducible representation $\pi$ of $\GL_n(F)$ its restriction to $\GL_{n-1}(F)$ is typically of infinite length. However, one knows that $\pi|_{\GL_{n-1}(F)}$ is locally finitely generated, that is, each Bernstein component of $\pi|_{\GL_{n-1}(F)}$ is finitely generated \cite{as20, {cs21}}. Hence in order to establish the $\Ext$ branching law stated above we need to deal with finitely generated representations. Here, a key input that we shall need will be \cite[Proposition 5.2]{np20}.
\end{remark}

\subsection{Organization of the Paper}
We describe the structure of the paper. In Section \ref{section - Bernstein decomposition}, we recall some material on the Bernstein decomposition and affine hecke algebras of type A. We recall the various classes of irreducible representations of $\GL_n(F)$ and the notion of Bernstein-Zelevinsky derivatives in Section \ref{section - classification and bz derivatives}.

We extend a result on the full faithfulness of the parabolic induction functor from \cite{ch22} to finitely generated representations in Section \ref{section - full faithfulness on a completed category}. This result plays a crucial role in the proof of Theorem \ref{theorem on injective mapping on ext modules} and hence Theorem \ref{theorem on non tempered ext branching}. We then prove Theorem \ref{theorem on injective mapping on ext modules} in Section \ref{section - on extensions and parabolic induction}. We describe the notion of strong $\Ext$ relevance in Section \ref{section -  definition of strong ext relevance} and prove Theorem \ref{theorem on non tempered ext branching} in Section \ref{section - theorem on ext branching}. 

\subsection{Acknowledgements} 
Some of the results were announced in Tianyuan Conference in August 2025. The first-named author would like to thank the organizers for their kind invitations. This project is supported in part by the Research Grants Council of the Hong Kong Special Administrative Region, China (Project No: 17305223, 17308324) and the National Natural Science Foundation of China (Project No. 12322120). 

\section{Bernstein Decomposition and Affine Hecke Algebras of Type A} \label{section - Bernstein decomposition}

We first recall some preliminary material and set up notations. \subsection{Basic Notations and Conventions}

\begin{itemize}
    \item We fix a non-archimedean local field $F$ and set $\G_n = \GL_n(F)$ for each positive integer $n$.
    \item Throughout this work, we only consider smooth representations of $p$-adic groups over $\mathbb{C}$.
    \item We denote by $\Rep(\G_n)$ the category of smooth representations of $\G_n$, and set $\Rep = \bigoplus_{n\geq 0} \Rep(\G_n)$.
    \item The set of irreducible representations of $\G_n$ is denoted by $\Irr(\G_n)$, and we define $\Irr = \bigcup_{n\geq 0} \Irr(\G_n)$.
    \item For any non-negative integer $k$ and any representation $\pi \in \Rep(\G_k)$, we denote $n(\pi) = k$.
    \item We write $\nu$ for the character of $\G_n$ given by $\nu(g) = |\det(g)|_F$, where $g \in \G_n$.
\end{itemize}

\subsection{Parabolic Induction} Given any two representations
$\pi_1 \in \Rep(\G_{n_1}(F))$ and $\pi_2 \in \Rep(\G_{n_2}(F))$, where
$n_1, n_2 \in \mathbb{Z}_{\geq 0}$, we define the parabolically induced representation $\pi_1\times \pi_2$ as follows. The 
standard parabolic subgroup $P_{n_1,n_2}$ of $\G_{n_1+n_2}(F)$ corresponding to 
the partition $(n_1,n_2)$ has the
Levi decomposition $P_{n_1,n_2} = M_{n_1,n_2}N_{n_1,n_2}$ with 
$M_{n_1,n_2} \cong \G_{n_1}(F) \times \G_{n_2}(F)$ . The induced representation $\pi_1 \times \pi_2$ 
is then defined as the normalized parabolic induction
$$
\pi_1 \times \pi_2 = \Ind_{P_{n_1,n_2}}^{\G_{n_1+n_2}(F)} 
(\pi_1 \otimes \pi_2 \otimes \mathbbm{1}_{N_{n_1,n_2}}),
$$
where $\mathbbm{1}_{N_{n_1,n_2}}$ denotes the trivial character on the unipotent radical.

\subsection{Jacquet Functor} For a parabolic subgroup $P = MN$ of $\G_n(F)$ and a representation $\pi \in \Rep(\G_n(F))$, the normalized Jacquet module 
$r_P(\pi)$ is constructed as follows. Let $\delta_P$ denote the modulus 
character of $P$. Then $r_P(\pi)$ is the representation of $M$ given by 
the quotient
$$
r_P(\pi) = \delta_P^{-1/2} \cdot \pi / \langle \pi(n)v - v \mid v \in \pi, n \in N \rangle.
$$
This construction gives the 
normalized Jacquet functor, which is adjoint to the normalized parabolic induction functor.

\subsection{Cuspidal Support and Cuspidal Line} For every irreducible admissible representation $\pi$ of $\G_n(F)$ there exists a unique multiset $\{\rho_1,\rho_2,\ldots,\rho_r\}$ consisting of cuspidal representations such that $\pi$ occurs as a subquotient of the parabolically induce representation $\rho_1\times \rho_2\times \ldots\times \rho_r$. This distinguished multiset is called the \emph{cuspidal support} of $\pi$, denoted $\csupp(\pi)$. 

\begin{definition}
Given an irreducible representation $\pi$ of $\G_n(F)$, we denote \[ \csupp_{\mathbb{Z}}(\pi) = \{ \nu^{k}\rho | \rho\in \csupp(\pi), k\in \mathbb{Z} \} \] and say that elements of $\csupp_{\mathbb{Z}}(\pi)$ lie in the cuspidal line of $\pi$.    
\end{definition} 

\subsection{Bernstein Decomposition for $\GL_n(F)$} \label{s berstein decomp} We recall some basic material on the Bernstein decomposition and the theory of types. The Bernstein decomposition for $\G_n$ gives a decomposition
of the category of smooth representations of $\G_n$ into indecomposable
subcategories called \emph{Bernstein blocks}. 

A \emph{cuspidal pair} $(M, \sigma)$ consists of:
\begin{itemize}
\item A Levi subgroup $M$ of $\G_n$
\item An irreducible supercuspidal representation $\sigma$ of $M$.
\end{itemize} Two cuspidal pairs $(M_1, \sigma_1)$ and $(M_2, \sigma_2)$ are \emph{inertially equivalent} 
if there exists an unramified character $\chi$ of $M_2$ such that $\sigma_1 $ and $\chi \sigma_2$ are isomorphic after conjugation 
by an element of $\G_n$. The equivalence classes under this relation are called \emph{inertial classes}.

We let $\mathfrak{B}(\G_n)$ denote the set of all inertial equivalence classes in $\G_n$. Let $\Rep(\G_n)$ is the category of smooth $\G_n$-representations. The Bernstein decomposition gives a decomposition,
\begin{equation*}
\Rep(\G_{n}) \simeq \bigoplus_{\mathfrak s \in \mathfrak{B}(\G_{n})} \mathcal{R}_\mathfrak{s}(\G_{n}),
\end{equation*}
where $\mathcal{R}_\mathfrak{s}(\G_{n})$ is the full subcategory associated to the inertial class $\mathfrak{s}$. The subcategory $\mathcal{R}_\mathfrak{s}(\G_{n})$ is called the \emph{Bernstein block} associated to $\mathfrak{s}$.

For each $s \in \mathfrak{B}(\G_{n})$, we can associate a type $(K,\tau)$ where $K$ is an open compact subgroup and $(\tau,V)$ is a finite dimensional representation of $K$. We define the corresponding Hecke algebra as,
\[\mathcal{H}(K, \tau) := \{f : \G_n \to \operatorname{End}(V) : f(k g k') = \tau^{\vee}(k) f(g) \tau^{\vee}(k'), \, k, k' \in K\}\] For a smooth representation $\pi$ of $\G_n$, there is a natural action of $\mathcal{H}(K, \tau)$ on $\operatorname{Hom}_K(\tau,\pi)$. Then the map, \[ \begin{aligned}
     \mathcal{R}_\mathfrak{s}(\G_{n}) & \longrightarrow \mathcal{H}(K, \tau)-\rm{Mod} \\
    \pi & \longmapsto \Hom_K(\tau,\pi)
  \end{aligned} \] sending $\pi$ to $\Hom_K(\tau,\pi)$ gives an equivalence of categories between $\mathcal{R}_\mathfrak{s}(\G_{n})$ and the category of $\mathcal{H}(K, \tau)$ modules.

\begin{remark}
In this rest of the paper, given $\pi \in \mathcal{R}_\mathfrak{s}(G_{n})$ we shall frequently abuse notation and write $\pi$ instead of $\Hom_K(\tau,\pi)$ to be the associated $\mathcal{H}(K, \tau)$ module. It shall be clear from the context if we are treating $\pi$ as a representation of $\G_n$ or as a Hecke algebra module.
\end{remark} It is known that every affine Hecke algebra associated to a type for
$\GL_n(F)$ is isomorphic to the Iwahori–spherical Hecke algebra of some product of $\GL_{n_i}(F_i)$ such that $\sum n_i = n$ and each $F_i$ is a finite extension of the field $F$ \cite{bk99}. 

More specifically, for any inertial class $\mathfrak s$, the associated Hecke algebra $\mathcal H_{\mathfrak s}$ admits a tensor product decomposition, \[ \mathcal H_{\mathfrak s} = \mathcal H(n_1,q_1) \otimes \mathcal H(n_2,q_2) \otimes \cdots \mathcal H(n_k,q_k) \] where each $\mathcal H(n_i,q_i)$ is an affine Hecke algebra of type A.

We now recall the generators and relations for $\mathcal{H}(n,q)$, the affine Hecke algebra of type A. We may sometimes drop $q$ from the notation and simply write $\mathcal H_n = \mathcal{H}(n,q)$.

\subsection{Affine Hecke Algebra of Type A} 

The affine Hecke algebra of type $A$ is made up of two  subalgebras - one being a commutative ring of Laurent polynomials and the other being a finite hecke algebra of type A. We begin by introducing the finite Hecke algebra, followed by its affine extension.

\subsubsection{The Finite Hecke Algebra}
For a fixed parameter $q \in \mathbb{C}^\times$ (not a root of unity), the \emph{finite Hecke algebra} $\mathcal{H}_f(n, q)$ is the associative $\mathbb{C}$-algebra generated by $T_1, \dots, T_{n-1}$, subject to the following relations:

\begin{itemize}
    \item \textbf{Braid relations}:
    $
    T_i T_{i+1} T_i = T_{i+1} T_i T_{i+1}, \quad \text{for all } 1 \leq i \leq n-2,
    $
    
    \item \textbf{Quadratic relations}:
    $
    (T_i - q)(T_i + 1) = 0, \quad \text{for all } 1 \leq i \leq n-1,
    $
    
    \item \textbf{Commutation relations}:
    $
    T_i T_j = T_j T_i, \quad \text{for all } |j - i| > 1.
    $
\end{itemize}

\begin{remark}
    
Note that if $q=1$, then $\mathcal{H}_f(n, q)$ is nothing but the group algebra of the symmetric group. Therefore $ \mathcal{H}_f(n, q) $ can be thought of as a deformation of the group algebra of the symmetric group. \end{remark}

\subsubsection{Affine Hecke algebra}

We will define the complex affine Hecke algebra for $ \G_n $ and a parameter $q \in \mathbb{C}^\times$, to be the vector space 
$\mathcal{H}(n, q) $ defined as:
\[ \mathcal{H}(n, q) = \mathbb{C}[y_1^{\pm 1}, \dots, y_n^{\pm 1}] \otimes_{\mathbb{C}} \mathcal{H}_f(n, q), \] with the associative algebra structure given by both the subalgebras $ \mathbb{C}[y_1^{\pm 1}, \dots, y_n^{\pm 1}] $ (the commutative ring of Laurent polynomials) and $ \mathcal{H}_f(n, q) $, and the following commutation relations.

\begin{enumerate}
    \item $T_i y_i T_i = q y_{i+1}, \quad \forall \, 1 \leq i \leq n - 1,$
    \item $ T_i y_j = y_j T_i, \quad \forall \, j \neq i, i + 1.$
\end{enumerate}

\subsubsection{Center of the affine Hecke algebra} It is known that the center $\mathcal{Z}$ of the affine Hecke algebra is given by the $S_n$ invariant laurent polynomials, that is, \[ \mathcal{Z} = \mathbb{C}[y_1^{\pm 1}, \dots, y_n^{\pm 1}]^{S_n}. \] By the work of Lusztig \cite{lu89} it is known that $\mathcal Z$ has a basis given as, \[ z_M = \sum_{w\in S_n} y_1^{i_{w(1)}}y_2^{i_{w(2)}} \cdots y_n^{i_{w(n)}} \] where, $M$ runs over all tuples $(i_1,i_2,\ldots,i_n) \in \mathbb{Z}^n/S_n$.

\subsubsection{Notion of Parabolic Induction} There is a notion of parabolic induction for Hecke algebra modules as follows. Let $n_1,n_2$ be positive integers and set $n=n_1+n_2$. Let $\pi_1$ and $\pi_2$ be finitely generated modules of $\mathcal H_{n_1}$ and $\mathcal H_{n_2}$ respectively. The induced $\mathcal H_n$ module is defined as, \[ \pi = \mathcal{H}_n \otimes_{\mathcal H_{n_1} \otimes \mathcal H_{n_2}} (\pi_1 \otimes \pi_2), \] 
which is also finitely-generated as an $\mathcal H_n$ module.

\section{Representations of $\GL_n(F)$ and Bernstein-Zelevinsky Derivatives} \label{section - classification and bz derivatives}
We recall the classification of irreducible representations of $\GL_n(F)$ \cite{ze80} in terms of multisegments and fix our notation. We also discuss some classes of irreducible representations of $\GL_n(F)$ and recall the notion of Bernstein-Zelevinsky derivatives.
\subsection{Segments}
Consider $a, b \in \mathbb{C}$ such that $b-a\in \mathbb{Z}_{\geq 0}$. Given a cuspidal representation $\rho$ of $\G_n$, a segment $\Delta$ associated to the datum ($\rho$,$a$,$b$) is an ordered set of irreducible representations of $\G_n$ of the form, 
\[ \Delta = [a,b]_{\rho} = \{\nu^a\rho,\nu^{a+1}\rho,\ldots,\nu^b\rho\}. \] 
The endpoints of the segment $[a,b]_{\rho}$ are denoted as $a(\Delta)=a$ and let $b(\Delta)=b$. 

\begin{definition}
Two segments $\Delta_1 = [a, b]_{\rho}$ and $\Delta_2 = [c, d]_{\rho}$ are linked if
\begin{itemize}
    \item $a < c$ and $c - 1 \leq b < d$, or
    \item $c < a$ and $a - 1 \leq d < b$.
\end{itemize}
In the first case, we say that $\Delta_1$ precedes $\Delta_2$ and write $\Delta_1 \prec \Delta_2$. 
\end{definition}

\begin{definition}
Given the segment $\Delta=[a,b]_{\rho}$, the associated principal series, \[  \nu^a\rho\times \nu^{a+1}\rho\times \ldots \times \nu^b\rho\] has a unique irreducible submodule and unique irreducible quotient which we denote as $Z(\Delta)$ and $Q(\Delta)$ respectively.    
\end{definition}

\subsection{Multisegments} A multisegment $\mathfrak m$ is a finite collection of segments (with possible
multiplicities), $\mathfrak m = \Delta_1 + \Delta_2 + \cdots + \Delta_r$. Let $\operatorname{Mult}$ denote the set of multisegments.

\subsection{Langlands Classification} Given a multisegment $\mathfrak{m} = \{ \Delta_1,\Delta_2,\ldots,\Delta_r \}$ such that $\Delta_i\not \prec \Delta_j$ for all $i<j$, the standard module \[\lambda(\mathfrak m) = Q(\Delta_1)\times Q(\Delta_2) \times \ldots \times Q(\Delta_r)\] has a unique irreducible quotient. We set \[ Q(\mathfrak m) = \cosoc(\lambda(\mathfrak m)) \] where, $\cosoc(\lambda(\mathfrak m))$ denotes the cosocle (maximal semisimple quotient) of $\lambda(\mathfrak m)$. By the Langlands classification the map
\[ \mathfrak m \mapsto Q(\mathfrak m) \]
gives a bijection 
$ \operatorname{Mult} \longrightarrow \operatorname{Irr}$.

\subsection{Zelevinsky Classification}
Given a multisegment $\mathfrak{m} = \{ \Delta_1,\Delta_2,\ldots,\Delta_r \}$ such that $\Delta_i\not \prec \Delta_j$ for all $i<j$, the Zelevinsky standard module \[ \zeta(\mathfrak m) = Z(\Delta_1)\times Z(\Delta_2) \times \ldots \times Z(\Delta_r)\] has a unique irreducible submodule which we denote as $Z(\mathfrak{m})$. By the Zelevinsky classification any irreducible representation of $\GL_n(F)$ can be uniquely written in this manner. Hence the map
\[ \mathfrak m \mapsto Z(\mathfrak m) \]
gives a bijection 
$ \operatorname{Mult} \longrightarrow \operatorname{Irr}$.

\subsection{Speh Representations} \label{subsection on definition of speh and arthur type representations}

The Speh representations built out of a discrete series are defined as follows \cite{ta86}. Let $\rho \in \Irr$ be a cuspidal unitary representation. For any integer $a \geq 0$, we consider the discrete series representation $\delta_{\rho}(a)$ given as,
$$ \delta_{\rho}(a) = Q\left(\left[-\frac{a-1}{2}, \frac{a-1}{2}\right]_{\rho}\right). $$ For integers $a, b \geq 1$, the \emph{Speh representation} $u_{\rho}(a,b)$ can be constructed in two equivalent ways as follows:
\begin{itemize}
\item The unique irreducible quotient of the induced representation
$$
\nu^{\frac{b-1}{2}}\delta_{\rho}(a) \times \nu^{\frac{b-1}{2}-1}\delta_{\rho}(a) \times \cdots \times \nu^{-\frac{b-1}{2}}\delta_{\rho}(a)
$$

\item Equivalently, it can be obtained by taking the unique irreducible submodule of the induced representation,
$$
Z\left(\nu^{\frac{a-1}{2}}\Delta(\rho,b)\right) \times Z\left(\nu^{\frac{a-3}{2}}\Delta(\rho,b)\right) \times \cdots \times Z\left(\nu^{-\frac{a-1}{2}}\Delta(\rho,b)\right)
$$
where $\Delta(\rho,b) = \left[-\frac{b-1}{2}, \frac{b-1}{2}\right]_{\rho}$.
\end{itemize} The irreducible Speh representations $u_{\rho}(a,b)$ so constructed are known to be unitary.

\subsection{Arthur Parameters and Arthur type representations}

A representation is said to be of Arthur type if it arises from an Arthur parameter. We recall what this means below.

Let $W_F$ denote the Weil group of the field $F$. An Arthur parameter of $\GL_n(F)$ is an admissible homomorphism, \[ \psi: W_F \times \SL_2(\mathbb{C}) \times \SL_2(\mathbb{C}) \rightarrow \GL_n(\mathbb{C}) \] such that \begin{itemize}
    \item The restriction of $\psi$ to $W_F$ is an admissible homomorphism with a bounded image.
    \item The restriction of $\psi$ to two $\SL_2(\mathbb{C})$ factors is algebraic.
\end{itemize} The first $\SL_2(\mathbb{C})$ is called the Deligne $\SL_2$ and the second additional $\SL_2(\mathbb{C})$ is called the Arthur $\SL_2$. 

The construction of $L$-parameters from Arthur parameters proceeds in the following way. Consider the homomorphism
$
\alpha: W_F \times \SL_2(\mathbb{C}) \rightarrow W_F \times \SL_2(\mathbb{C}) \times \SL_2(\mathbb{C})
$
defined by
$$
\alpha(w,g) = \left(w, g, \begin{pmatrix}
    |w|^{1/2} & 0 \\
    0 & |w|^{-1/2}
\end{pmatrix}\right)
$$
for $w \in W_F$ and $g \in \SL_2(\mathbb{C})$, where $|w|$ denotes the canonical absolute value on the Weil group.

Given an Arthur parameter $\psi: W_F \times \SL_2(\mathbb{C}) \to \GL_n(\mathbb{C})$, we obtain the corresponding $L$-parameter via composition:
$$
\phi_\psi := \psi \circ \alpha: W_F \times \SL_2(\mathbb{C}) \to \GL_n(\mathbb{C})
$$
Then via the local Langlands correspondence:
\begin{enumerate}
\item Every Arthur parameter $\psi$ determines a unique irreducible unitary representation $\pi_\psi$ of $\GL_n(F)$
\item The representation $\pi_\psi$ depends only on the $\GL_n(\mathbb{C})$-conjugacy class of $\psi$.
\end{enumerate}

\begin{definition}
An irreducible unitary representation of $\GL_n(F)$ is said to be of \emph{Arthur type} if it arises from some Arthur parameter $\psi$ via this construction.
\end{definition}

We note that an irreducible representation $\pi$ of $\G_n$ is of Arthur type if there exist Speh representations $\pi_1,\pi_2,\cdots,\pi_k$ such that, \[ \pi = \pi_1 \times \pi_2 \times \cdots \times \pi_k. \] 

\subsection{Bernstein-Zelevinsky Derivatives and BZ Filtration} \label{subsection on derivatives}

Given $\pi\in \Rep(\G_n)$, we define the $i$-th Bernstein-Zelevinsky derivative $\pi^{(i)}$ as follows \cite{bz77}. Consider the subgroup $U_i < \G_{n-i}\times \G_i$ of upper triangular unipotent matrices in $\G_i$. Fix a non-degenerate character $\psi_i$ of $U_i$. We first introduce the following twisted jacquet module functor.

For any $\sigma \in \Rep(\G_{n-i} \times \G_i)$, we define:
$$
T_i(\sigma) := \sigma \big/ \left\langle \sigma(u)\cdot x - \psi_i(u)\cdot x \,\middle|\, x \in \sigma, u \in U_i \right\rangle
$$
This quotient construction yields $T_i(\sigma) \in \Rep(\G_{n-i})$.

Given $\pi\in \Rep(\G_n)$, the $i$-th Bernstein-Zelevinsky derivative $\pi^{(i)}\in \Rep(\G_{n-i})$ is defined as the composition,
\[ \pi^{(i)} = T_i \circ (r_{(n-i,i)}(\pi)). \]  Here, $r_{(n-i,i)}$ denotes the Jacquet module with respect to the standard parabolic of $\G_n$ that corresponds to the partition $(n-i,i)$ of $n$.
 
\subsubsection{Highest Derivative} Given, $\pi\in \Rep(\G_n)$, let $h$ be the largest integer such that $\pi^{(h)}\neq 0$. Then we say that $\pi^{(h)}$ is the highest derivative of $\pi$.

\subsubsection{Bernstein-Zelevinsky Filtration}

Let $\pi\in \Rep(\G_n)$. By the Bernstein-Zelevinsky filtration (see \cite{bz76}), we have a filtration of submodules $ 0=V_n\subsetneq V_{n-1}\subsetneq \cdots \subsetneq V_0 = \pi|_{\G_{n-1}}$ such that,
\[ V_i/V_{i+1} \cong \nu^{1/2}\pi^{(i+1)}\times \cind_{U_i}^{\G_i}\psi_i. \]
Here $U_i\subset \G_i$ is the subgroup of upper triangular unipotent matrices and $\psi_i$ is a fixed non-degenerate character of $U_i$.

\section{Full Faithfulness of the Parabolic Induction Functor on a completed category} \label{section - full faithfulness on a completed category}

We recall the notion of completed categories and some of their properties from \cite{fu18}. 
\subsection{Completed Categories} 
Let $\mathcal{C}$ be a $k$-linear abelian category equipped with a subfunctor $\phi \hookrightarrow \mathrm{Id}_{\mathcal{C}}$ that preserves epimorphisms. Then every object $V \in \mathcal{C}$ admits a natural filtration $ V \supset \phi(V) \supset \phi^2(V) \supset \cdots. $ 

\begin{definition}
We say that $\mathcal{C}$ is \textit{$\phi$-adically complete} if for every object $V \in \mathcal{C}$, the canonical map $ V \to \varprojlim V/\phi^n(V) $ is an isomorphism.    
\end{definition} 

We now describe the primary example of a completed category that will be central to our discussion.

\subsection{The Category $\widehat{\mathcal{C}}(\mathcal J)$} \label{subsection on the full faithfullness}

Given an irreducible representation $\pi$ in $\Rep(\mathrm{G}_m)$, let $\mathfrak{s}$ denote its inertial class and $\mathcal H$ denote the corresponding Hecke algebra. Let $\mathcal{J}$ be the maximal ideal in the center $\mathcal{Z}$ of $\mathcal H$ that annihilates $\pi$. Consider the completion $\widehat{\mathcal H}$ of $\mathcal H$ with respect to the maximal ideal $\mathcal J$ i.e. $\widehat{\mathcal H}=\varprojlim_i \mathcal H/\mathcal J^i\mathcal H$. 

Let $\omega \in \Rep(\G_k)$ be a Speh representation. Let $\mathcal C_\omega$ denote the full subcategory of finite-length representations $\sigma$ of $\mathrm{Rep}(\G_m)$ (for some $m$) and such that for any composition factor $\sigma^{\prime}$ of $\sigma$ and for any $\rho \in \csupp(\sigma^{\prime})$, either: \begin{enumerate}
    \item $\rho \in \csupp(\omega)$.
    \item $\rho \not \in (\csupp_\mathbb{Z}(\omega) - \csupp(\omega))$.
\end{enumerate}

Given an irreducible representation $\rho^{\prime}$ in $\mathcal C_{\omega}$, let $\mathfrak{s}$ denote its inertial class and $\mathcal H$ denote the corresponding Hecke algebra. Let $\mathcal{J}$ be the maximal ideal in the center $\mathcal{Z}$ of $\mathcal H$ corresponding to the irreducible representation $\rho^{\prime}$ lying in $C_\omega$ ; so the maximal ideal $\mathcal{J}$ annihilates $\rho^{\prime}$. We similarly let $\mathfrak s'$ be the inertial class of $\omega \times \rho'$ and let $\mathcal J'$ be the maximal ideal in the center $\mathcal Z'$ of the corresponding Hecke algebra $\mathcal H^{\prime}$ annihilating $\omega \times \rho'$. Let $\mathrm{Rep}_{\mathfrak s}(\mathrm G_m)$ and $\mathrm{Rep}_{\mathfrak s'}(\mathrm G_m)$ be the Bernstein blocks of $\mathrm{Rep}(\mathrm G_m)$ associated to $\mathfrak s$ and $\mathfrak s'$ respectively.

Let $\mathcal C(\mathcal{J})$ (resp. $\mathcal C(\mathcal J')$) be the category of representations in $\mathrm{Rep}(\mathrm G_m)$ (resp. $\mathrm{Rep}(\mathrm G_{m+k})$ that are annihilated by a power of the maximal ideal $\mathcal J$ (resp. $\mathcal J'$). The objects in this category are of finite length. Let us also consider the completed category $\widehat{C}(\mathcal{J})$ whose objects are of the form,
\begin{align} \label{eqn tau complete}
\widehat{\tau} = \varprojlim(\tau/\mathcal{J}^i) 
\end{align}
for some object $\tau\in \mathrm{Rep}_\mathfrak{s}(\mathrm G_{n})$ such that $\tau/\mathcal J^i$ is of finite length. By the equivalence of cateogries in \cite{bk99}, we shall usually regard objects in $\widehat{\mathcal C}(\mathcal J)$ as $\mathcal H$-modules.

We define an endofunctor $\phi$ on $\widehat{\mathcal{C}}(\mathcal J)$ by $\phi(V) := \mathcal{J}V$. Then $\phi$ is naturally a subfunctor of $\mathrm{Id}_{\mathcal{C}}$ preserving epimorphisms. As objects in $\widehat{\mathcal C}(\mathcal J)$ are finitely-generated $\widehat{\mathcal H}$-modules, $\widehat{\mathcal{C}}(\mathcal J)$ is a $\phi$-adically complete category. We similarly define the completed category $\widehat{\mathcal C}(\mathcal J')$. 

We now compare $\widehat{\mathcal C}(\mathcal J)$ with the category $\mathrm{Mod}_{fg}(\widehat{\mathcal H})$ of finitely-generated $\widehat{\mathcal H}$-modules.

\begin{lemma} \label{lem H hat morphisms}
Let $\pi$ be a finitely generated $\widehat{\mathcal H}$-module. Then 
\[   \mathrm{Hom}_{\widehat{\mathcal C}(\mathcal J)}(\widehat{\mathcal H}, \pi) \cong \mathrm{Hom}_{\widehat{\mathcal H}}(\widehat{\mathcal H}, \pi) .
\]
\end{lemma}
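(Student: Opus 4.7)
The plan is to identify both sides of the isomorphism with the underlying module $\pi$. The right-hand side is immediate: any $\widehat{\mathcal{H}}$-module morphism $\widehat{\mathcal{H}} \to \pi$ is determined by the image of $1$, yielding $\Hom_{\widehat{\mathcal{H}}}(\widehat{\mathcal{H}}, \pi) \cong \pi$ by evaluation at $1$.

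For the left-hand side, the strategy is to leverage the $\phi$-adic completeness of $\widehat{\mathcal{C}}(\mathcal{J})$ (with $\phi(V) = \mathcal{J}V$) to reduce to finite-level computations. Because both $\widehat{\mathcal{H}}$ and $\pi$ are complete objects, a morphism between them in $\widehat{\mathcal{C}}(\mathcal{J})$ is determined by the induced maps on the finite-length quotients $\pi/\mathcal{J}^i\pi$, and since $\mathcal{J}^i \widehat{\mathcal{H}}$ lies in the kernel of such composites, the morphism factors through $\mathcal{H}/\mathcal{J}^i\mathcal{H}$. This yields
\[ \Hom_{\widehat{\mathcal{C}}(\mathcal{J})}(\widehat{\mathcal{H}}, \pi) \cong \varprojlim_i \Hom_{\mathcal{H}}(\mathcal{H}/\mathcal{J}^i\mathcal{H}, \pi/\mathcal{J}^i\pi). \]
Since $\mathcal{J}$ lies in the center of $\mathcal{H}$, each quotient $\pi/\mathcal{J}^i\pi$ is annihilated by $\mathcal{J}^i$, so each individual Hom set reduces to $\pi/\mathcal{J}^i\pi$ via evaluation at $1$, and taking the inverse limit identifies the left-hand side with $\varprojlim_i \pi/\mathcal{J}^i\pi$.

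The key substantive step is then to show that the natural map $\pi \to \varprojlim_i \pi/\mathcal{J}^i\pi$ is an isomorphism, i.e.\ that $\pi$ is itself $\mathcal{J}$-adically complete. Using the finite generation of $\pi$, I would present it as a quotient $\widehat{\mathcal{H}}^r / K$. The completion $\widehat{\mathcal{H}}$ is module-finite over the complete Noetherian local ring $\varprojlim_i \mathcal{Z}/\mathcal{J}^i$, and hence is itself Noetherian and $\mathcal{J}$-adically complete; the direct sum $\widehat{\mathcal{H}}^r$ inherits these properties. Noetherianity together with an Artin-Rees type argument applied over the center forces the submodule $K$ to be $\mathcal{J}$-adically closed, and the quotient $\pi$ inherits completeness.

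Combining the three ingredients identifies both Hom sets canonically with $\pi$, and functoriality of the quotient construction shows that these identifications are compatible with the obvious comparison map (an $\widehat{\mathcal{H}}$-linear map descends functorially to a compatible family on the $\mathcal{J}^i$-quotients). The main technical obstacle I anticipate is the $\mathcal{J}$-adic completeness of $\pi$: while this is routine over the commutative completed center, it requires a careful reduction from the noncommutative setting of $\widehat{\mathcal{H}}$-modules to the commutative setting via the module-finiteness of $\widehat{\mathcal{H}}$ over $\widehat{\mathcal{Z}}$.
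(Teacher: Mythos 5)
Your proof is correct and follows essentially the same route as the paper's: both arguments reduce an $\mathcal H$-linear map $\widehat{\mathcal H}\to\pi$ to its induced maps on the quotients $\pi/\mathcal J^k\pi$, observe that $\mathcal J^k\widehat{\mathcal H}$ dies under these composites, and reassemble via the universal property of the inverse limit, using that the finitely generated $\widehat{\mathcal H}$-module $\pi$ is $\mathcal J$-adically complete. Your additional justification of that completeness (module-finiteness of $\widehat{\mathcal H}$ over the complete Noetherian center plus Artin--Rees) is exactly the fact the paper asserts in its setup of $\widehat{\mathcal C}(\mathcal J)$, so no gap remains.
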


\begin{proof}
For $f \in \mathrm{Hom}_{\widehat{\mathcal H}}(\widehat{\mathcal H}, \pi)$, one defines a $\mathcal H$-module morphism by restricting to $\mathcal H$-structure. Conversely, for $f' \in \mathrm{Hom}_{\widehat{\mathcal C}(\mathcal J)}(\widehat{\mathcal H}, \pi)$, one obtains $\mathcal H$-morphisms $f_k$ via composition:
\[    \widehat{\mathcal H} \stackrel{f'}{\rightarrow} \pi \rightarrow \pi/\mathcal J^k.\pi ,
\]
where the second morphism is the natural projection. Note that $\mathcal J^k\widehat{\mathcal H}$ lies in the kernel of the composition, and so it also lifts to a $\widehat{\mathcal H}$-morphism. By the universal property of inverse limit, we obtain a morphism in $\mathrm{Hom}_{\widehat{\mathcal C}(\mathcal J)}(\widehat{\mathcal H}, \pi)$. One can check that the above two construction are inverse to each other.
\end{proof}

\begin{lemma} \label{lem equiv cate}
There is an equivalence of categories between $\widehat{\mathcal C}(\mathcal J)$ and the category $\mathrm{Mod}_{fg}(\widehat{\mathcal H})$ of finitely-generated $\widehat{\mathcal H}$-modules.
\end{lemma}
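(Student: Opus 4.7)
The plan is to build the equivalence from the Bushnell--Kutzko equivalence together with the inverse-limit structure that defines $\widehat{\mathcal C}(\mathcal J)$. The functor $F \colon \widehat{\mathcal C}(\mathcal J) \to \mathrm{Mod}_{fg}(\widehat{\mathcal H})$ would send $\widehat\tau = \varprojlim \tau/\mathcal J^i$ to its underlying $\widehat{\mathcal H}$-module. First I would check this lands in finitely generated modules: the quotient $\widehat\tau/\mathcal J\widehat\tau \cong \tau/\mathcal J\tau$ has finite length, so one may choose generators $\bar x_1, \ldots, \bar x_r$ and lift them to $x_1, \ldots, x_r \in \widehat\tau$. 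Applying a Nakayama-type argument in the $\phi$-adically complete category $\widehat{\mathcal C}(\mathcal J)$ (using that $\widehat\tau \to \varprojlim \widehat\tau/\phi^n \widehat\tau$ is an isomorphism) would show that these generate $\widehat\tau$ over $\widehat{\mathcal H}$.

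For full faithfulness, I would start from Lemma \ref{lem H hat morphisms}, which settles the case where the source is $\widehat{\mathcal H}$, and bootstrap to arbitrary finitely generated modules via presentations. Concretely, given $\widehat\tau_1, \widehat\tau_2 \in \widehat{\mathcal C}(\mathcal J)$, I would use that $\widehat{\mathcal H}$ is Noetherian (since it is a finite module over the Noetherian ring $\widehat{\mathcal Z}$, the completion of the central subalgebra) to produce a finite presentation $\widehat{\mathcal H}^{\oplus m} \to \widehat{\mathcal H}^{\oplus n} \to \widehat\tau_1 \to 0$, apply $\mathrm{Hom}(-, \widehat\tau_2)$ in both categories, and deduce the isomorphism of Hom-groups from Lemma \ref{lem H hat morphisms} together with the five lemma, after checking that such presentations can be realised inside $\widehat{\mathcal C}(\mathcal J)$.

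For essential surjectivity, given a finitely generated $\widehat{\mathcal H}$-module $M$ with generators $x_1, \ldots, x_n$, I would set $\tau := \mathcal H\cdot x_1 + \cdots + \mathcal H\cdot x_n \subset M$, which is a finitely generated $\mathcal H$-submodule and hence corresponds to a finitely generated smooth representation in $\mathrm{Rep}_{\mathfrak s}(\mathrm G_m)$ via Bushnell--Kutzko. A Nakayama argument gives $\tau + \mathcal J^i M = M$ for every $i$, and one identifies $\tau/\mathcal J^i\tau$ with $M/\mathcal J^i M$ using an Artin--Rees style comparison (namely $\tau \cap \mathcal J^i M = \mathcal J^i \tau$ for $i$ large, up to cofinal re-indexing). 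Since $\mathcal Z/\mathcal J^i$ is Artinian, each $\tau/\mathcal J^i\tau$ is of finite length. Finally, $\mathcal J$-adic completeness of finitely generated modules over $\widehat{\mathcal H}$ yields $M \cong \varprojlim M/\mathcal J^i M \cong \varprojlim \tau/\mathcal J^i\tau$, placing $M$ in $\widehat{\mathcal C}(\mathcal J)$.

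The main obstacle I anticipate is the essential surjectivity: although the construction of $\tau$ is natural, verifying that $\tau/\mathcal J^i\tau \cong M/\mathcal J^i M$ (rather than only a surjection) requires control on $\tau \cap \mathcal J^i M$, i.e., a non-commutative Artin--Rees type input, and one has to justify this using the finiteness of $\widehat{\mathcal H}$ over the Noetherian commutative ring $\widehat{\mathcal Z}$ so that the standard commutative Artin--Rees lemma applies after restriction of scalars. Everything else amounts to standard manipulations with $\phi$-adic completions and Lemma \ref{lem H hat morphisms}.
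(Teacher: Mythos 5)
Your treatment of full faithfulness is essentially the paper's own argument: take a finite free $\widehat{\mathcal H}$-presentation of $\widehat\tau_1$ (using that $\widehat{\mathcal H}$ is Noetherian, being module-finite over $\widehat{\mathcal Z}$), apply $\mathrm{Hom}(-,\widehat\tau_2)$ in both categories, and compare kernels via Lemma \ref{lem H hat morphisms}; your Nakayama-type verification that objects of $\widehat{\mathcal C}(\mathcal J)$ are finitely generated over $\widehat{\mathcal H}$ is also fine and in fact fills in a point the paper only asserts.

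The genuine gap is in your essential surjectivity argument, precisely at the step you flagged. Setting $\tau=\mathcal H x_1+\cdots+\mathcal H x_n\subset M$, the comparison $\tau\cap\mathcal J^iM=\mathcal J^i\tau$ (up to cofinal re-indexing) cannot be obtained from the commutative Artin--Rees lemma by restriction of scalars: Artin--Rees compares the induced and adic filtrations on a \emph{submodule over the same Noetherian ring}, whereas $\tau$ is only an $\mathcal H$- (equivalently $\mathcal Z$-) submodule of $M$, not a $\widehat{\mathcal H}$- or $\widehat{\mathcal Z}$-submodule, and $M$ is not finitely generated over $\mathcal Z$. Worse, what your construction would prove --- that every finitely generated $\widehat{\mathcal H}$-module is isomorphic to $\varprojlim \tau/\mathcal J^i\tau\cong\widehat{\mathcal Z}\otimes_{\mathcal Z}\tau$ for a \emph{finitely generated} $\mathcal H$-module $\tau$ --- is false in the closest commutative model: for $\widehat{\mathcal Z}$ a power series ring of dimension $\geq 2$ over the localization $\mathcal Z_{\mathcal J}$ there exist primes $\mathfrak p\subset\widehat{\mathcal Z}$ with $\mathfrak p\cap\mathcal Z=0$, and since the annihilator of $\widehat{\mathcal Z}\otimes_{\mathcal Z}\tau$ is extended from $\mathcal Z$ for finitely generated $\tau$, a module such as $\widehat{\mathcal H}\otimes_{\widehat{\mathcal Z}}(\widehat{\mathcal Z}/\mathfrak p)$ is not of this form. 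So this route cannot be completed as stated.

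The repair is much simpler and uses the flexibility built into the definition of $\widehat{\mathcal C}(\mathcal J)$: the representation $\tau$ in (\ref{eqn tau complete}) is \emph{not} required to be finitely generated, only to have finite-length quotients $\tau/\mathcal J^i\tau$. Given a finitely generated $\widehat{\mathcal H}$-module $M$, take $\tau:=M$ viewed as an $\mathcal H$-module (hence as an object of $\mathcal R_{\mathfrak s}(\mathrm G_m)$ via the type-theoretic equivalence). Then $\tau/\mathcal J^i\tau=M/\mathcal J^iM$ is a finitely generated module over the finite-dimensional algebra $\mathcal H/\mathcal J^i\mathcal H$, hence of finite length, and $M$ is $\mathcal J$-adically complete because it is finitely generated over $\widehat{\mathcal H}$, which is module-finite over the complete Noetherian local ring $\widehat{\mathcal Z}$; therefore $M\cong\varprojlim M/\mathcal J^iM=\varprojlim\tau/\mathcal J^i\tau$ lies in $\widehat{\mathcal C}(\mathcal J)$. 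With this choice essential surjectivity is immediate and no Artin--Rees input is needed.
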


\begin{proof}
Since any objects in $\widehat{\mathcal C}(\mathcal J)$ are finitely-generated $\widehat{\mathcal H}$-modules, we can regard objects in $\widehat{\mathcal C}(\mathcal J)$ as objects in $\mathrm{Mod}_{fg}(\widehat{\mathcal H})$. It remains to check that 
\[   \mathrm{Hom}_{\widehat{\mathcal C}(\mathcal J)}(\pi_1, \pi_2) \cong \mathrm{Hom}_{\widehat{\mathcal H}}(\pi_1, \pi_2) .
\]
By the Noetherian property of $\widehat{\mathcal H}$-modules, we have an exact sequence from the first two terms of a free $\widehat{\mathcal H}$-resolution:
\[   \widehat{\mathcal H}^{\oplus s} \rightarrow  \widehat{\mathcal H}^{\oplus r} \rightarrow \pi_1  .
\]
Note that we have a natural inclusion form RHS to LHS by only considering the $\mathcal H$-actions from $\widehat{\mathcal H}$-actions.

Now we have the following commutative diagram, where the two horizontal rows are exact:
\[  \xymatrix{ 0 \ar[r] &  \mathrm{Hom}_{\widehat{\mathcal H}}(\pi_1, \pi_2) \ar[r] \ar[d] & \mathrm{Hom}_{\widehat{\mathcal H}}( \widehat{\mathcal H}^{\oplus r}, \pi_2) \ar[r]^{f} \ar[d] & \mathrm{Hom}_{\widehat{\mathcal H}}( \widehat{\mathcal H}^{\oplus s}, \pi_2) \ar[d]   \\
 0 \ar[r] &  \mathrm{Hom}_{\widehat{\mathcal C}(\mathcal J)}(\pi_1, \pi_2) \ar[r] & \mathrm{Hom}_{\widehat{\mathcal C}(\mathcal J)}( \widehat{\mathcal H}^{\oplus r}, \pi_2) \ar[r]^{f'} & \mathrm{Hom}_{\widehat{\mathcal C}(\mathcal J)}( \widehat{\mathcal H}^{\oplus s}, \pi_2)  
}
\]
As discussed in Lemma \ref{lem H hat morphisms}, the vertical maps are those natural inclusions by restricting to $\mathcal H$-actions. Now it follows from Lemma \ref{lem H hat morphisms} that the kernel of $f$ is isomorphic to the kernel of $f'$. This shows that the leftmost vertical map is an isomorphism.
\end{proof}

In terms of studying Ext-groups in $\mathcal R_{\mathfrak s}(\mathrm{G}_m)$, the objects in $\widehat{\mathcal C}(\mathcal J)$ are not the ones of most interest, as most representations in $\widehat{\mathcal C}(\mathcal J)$ are not finitely-generated as $\mathrm{G}_m$-representations. Instead, finitely generated representations in $\mathrm{Rep}(\mathrm{G}_m)$ are the ones that naturally arise in branching laws. The connection from $\mathcal R_{\mathfrak s}(\mathrm{G}_m)$ to $\widehat{\mathcal C}(\mathcal J)$ is as follows:

\begin{proposition} \label{prop ext identify}
Let $\pi_1$ and $\pi_2$ be finitely-generated representations in $\mathcal R_{\mathfrak s}(\mathrm{G}_m)$. We view $\pi_1$ and $\pi_2$ as $\mathcal H$-modules as in Section \ref{s berstein decomp}, and then there is a natural $\mathcal Z$-action on $\mathrm{Ext}^i_{\mathcal H}(\pi_1, \pi_2)$. Recall that $\widehat{\pi_1}$ and $\widehat{\pi_2}$ are defined as in (\ref{eqn tau complete}). Then, for any $i \geq 0$, 
\[  \mathrm{Ext}^i_{\mathcal H}(\pi_1, \widehat{\pi_2}) \cong \widehat{\mathcal Z}\otimes_{\mathcal Z} \mathrm{Ext}^i_{\mathcal H}(\pi_1, \pi_2) \cong \mathrm{Ext}^i_{\widehat{\mathcal C}(\mathcal J)}(\widehat{\pi}_1, \widehat{\pi}_2) .
\]
\end{proposition}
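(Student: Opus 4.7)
The plan is to deduce both isomorphisms from the flatness of the $\mathcal J$-adic completion of $\mathcal Z$, combined with the equivalence of categories from Lemma \ref{lem equiv cate}. The commutative algebra foundation that I would establish first is: the Bernstein center $\mathcal Z$ is a Noetherian $\mathbb C$-algebra, $\mathcal H$ is a finitely generated $\mathcal Z$-module (so in particular Noetherian), and consequently the finitely generated $\mathcal H$-modules $\pi_1,\pi_2$ are also finitely generated as $\mathcal Z$-modules. From this it follows that $\widehat{\mathcal Z}$ is flat over $\mathcal Z$, that $\widehat{\mathcal H} \cong \widehat{\mathcal Z}\otimes_{\mathcal Z}\mathcal H$ is flat over $\mathcal H$, and that $\widehat{\pi_2}\cong \widehat{\mathcal Z}\otimes_{\mathcal Z}\pi_2$ by the standard identification of completion with tensor product for finitely generated modules over a Noetherian ring.

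For the first isomorphism, I would fix a resolution $P_\bullet \to \pi_1$ by finitely generated free $\mathcal H$-modules $P_j = \mathcal H^{\oplus r_j}$ and note that, for each $j$,
$$\mathrm{Hom}_{\mathcal H}(P_j,\widehat{\pi_2}) \;\cong\; \widehat{\pi_2}^{\oplus r_j} \;\cong\; \widehat{\mathcal Z}\otimes_{\mathcal Z}\pi_2^{\oplus r_j} \;\cong\; \widehat{\mathcal Z}\otimes_{\mathcal Z}\mathrm{Hom}_{\mathcal H}(P_j,\pi_2).$$
Since $\widehat{\mathcal Z}$ is flat over $\mathcal Z$, passing to cohomology commutes with $\widehat{\mathcal Z}\otimes_{\mathcal Z}(-)$, which yields $\mathrm{Ext}^i_{\mathcal H}(\pi_1,\widehat{\pi_2})\cong \widehat{\mathcal Z}\otimes_{\mathcal Z}\mathrm{Ext}^i_{\mathcal H}(\pi_1,\pi_2)$.

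For the second isomorphism, the flatness of $\widehat{\mathcal H}$ over $\mathcal H$ together with $\widehat{\mathcal H}\otimes_{\mathcal H}\pi_1\cong \widehat{\pi_1}$ imply that $\widehat{P}_\bullet := \widehat{\mathcal H}\otimes_{\mathcal H}P_\bullet$ is a finitely generated free resolution of $\widehat{\pi_1}$ in $\mathrm{Mod}_{fg}(\widehat{\mathcal H})$; under the equivalence of Lemma \ref{lem equiv cate}, this is equally a projective resolution of $\widehat{\pi_1}$ inside $\widehat{\mathcal C}(\mathcal J)$. Computing $\mathrm{Ext}^i_{\widehat{\mathcal C}(\mathcal J)}(\widehat{\pi_1},\widehat{\pi_2})$ from it, the term-by-term identification
$$\mathrm{Hom}_{\widehat{\mathcal H}}(\widehat{\mathcal H}^{\oplus r_j},\widehat{\pi_2}) \;\cong\; \widehat{\pi_2}^{\oplus r_j} \;\cong\; \mathrm{Hom}_{\mathcal H}(\mathcal H^{\oplus r_j},\widehat{\pi_2})$$
shows that the Hom-complex coincides with $\mathrm{Hom}_{\mathcal H}(P_\bullet,\widehat{\pi_2})$, yielding $\mathrm{Ext}^i_{\widehat{\mathcal C}(\mathcal J)}(\widehat{\pi_1},\widehat{\pi_2})\cong \mathrm{Ext}^i_{\mathcal H}(\pi_1,\widehat{\pi_2})$. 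The main technical point requiring care is the justification that Ext in $\widehat{\mathcal C}(\mathcal J)$ really can be computed from such a finitely generated free resolution even though $\widehat{\mathcal C}(\mathcal J)$ is only the subcategory of finitely generated objects; this is where Lemma \ref{lem equiv cate} and the Noetherianity of $\widehat{\mathcal H}$ are essential.
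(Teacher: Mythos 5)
Your proposal is correct and follows essentially the same route as the paper: a finitely generated free $\mathcal H$-resolution of $\pi_1$, the term-wise identification $\widehat{\mathcal Z}\otimes_{\mathcal Z}\mathrm{Hom}_{\mathcal H}(\mathcal H,\pi_2)\cong\widehat{\pi_2}\cong\mathrm{Hom}_{\mathcal H}(\mathcal H,\widehat{\pi_2})$ together with flatness of completion, and Lemma \ref{lem equiv cate} to pass to $\widehat{\mathcal C}(\mathcal J)$. The only difference is that you prove directly, via flat base change, the two statements the paper outsources to citations (Nori--Prasad, Proposition 5.2, and Chan--Savin, Lemma A.5), so your argument is a self-contained version of the same proof.
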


\begin{proof}
For the former isomorphism, one sees a result of Nori and Prasad \cite[Proposition 5.2]{np20}. Indeed, the key for the isomorphism is that $\pi_1$ admits a free $\mathcal H$-resolution, and there are natural identifications between the following spaces:
\[   \widehat{\mathcal Z} \otimes_{\mathcal Z}  \mathrm{Hom}_{\mathcal H}(\mathcal H, \pi_2) \cong \widehat{\mathcal Z}\otimes_{\mathcal Z}\pi_2=\widehat{\pi_2} \cong  \mathrm{Hom}_{\mathcal H}(\mathcal H, \widehat{\pi_2}) .
\]

We now explain the latter isomorphism. It follows from \cite[Lemma A.5]{cs19} that there is a natural isomorphism:
\[  \widehat{\mathcal Z}\otimes_{\mathcal Z} \mathrm{Ext}^i_{\mathcal H}(\pi_1, \pi_2) \cong \mathrm{Ext}^i_{\widehat{\mathcal H}}(\widehat{\pi}_1, \widehat{\pi}_2) .
\]
Now the proposition follows from Lemma \ref{lem equiv cate}.
\end{proof}

\begin{corollary} \label{cor non-vanishing}
We keep the setting in Proposition \ref{prop ext identify}. Suppose furthermore that $\pi_1$ or $\pi_2$ is of finite length and is in $\widehat{\mathcal C}(\mathcal J)$ i.e. either $\pi_1 \cong \pi_1/\mathcal J^k\pi_1$ and $\pi_2\cong \pi_2/\mathcal J^k\pi_2$ for some $k \geq 0$. If $\mathrm{dim}~\mathrm{Ext}^i_{\mathcal H}(\pi_1, \pi_2) <\infty$, then  
\[  \mathrm{Ext}^i_{\mathcal H}(\pi_1, \pi_2) \cong \mathrm{Ext}^i_{\widehat{\mathcal C}(\mathcal J)}(\widehat{\pi}_1, \widehat{\pi}_2) .
\]
\end{corollary}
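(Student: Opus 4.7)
The plan is to deduce Corollary \ref{cor non-vanishing} as a direct consequence of Proposition \ref{prop ext identify}, by observing that the Ext group in question is annihilated by some power of the maximal ideal $\mathcal J$. Once this annihilation is established, base change from $\mathcal Z$ to $\widehat{\mathcal Z}$ becomes trivial, and the second isomorphism in Proposition \ref{prop ext identify} identifies $\mathrm{Ext}^i_{\mathcal H}(\pi_1,\pi_2)$ with $\mathrm{Ext}^i_{\widehat{\mathcal C}(\mathcal J)}(\widehat{\pi}_1,\widehat{\pi}_2)$.

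For the first case, where $\pi_2 \cong \pi_2/\mathcal J^k\pi_2$, I would observe that $\widehat{\pi_2} \cong \pi_2$, since the inverse system defining $\widehat{\pi_2}$ stabilizes after index $k$. Then $\mathrm{Ext}^i_{\mathcal H}(\pi_1, \widehat{\pi_2}) \cong \mathrm{Ext}^i_{\mathcal H}(\pi_1, \pi_2)$, and the first isomorphism of Proposition \ref{prop ext identify} (combined with the second) yields the claim at once. For the case $\pi_1 \cong \pi_1/\mathcal J^k\pi_1$, the argument is slightly less direct: for any $z \in \mathcal J^k$, multiplication by $z$ is the zero endomorphism of $\pi_1$, so by functoriality of $\mathrm{Ext}^i_{\mathcal H}(-,\pi_2)$ it induces the zero endomorphism on $\mathrm{Ext}^i_{\mathcal H}(\pi_1,\pi_2)$. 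Since this induced map coincides with the natural action of the center, we conclude $\mathcal J^k \cdot \mathrm{Ext}^i_{\mathcal H}(\pi_1,\pi_2) = 0$.

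To finish, I would invoke the standard commutative-algebra identification $\widehat{\mathcal Z}/\mathcal J^k\widehat{\mathcal Z} \cong \mathcal Z/\mathcal J^k$, which shows that any $\mathcal Z$-module $M$ annihilated by $\mathcal J^k$ satisfies
\[
\widehat{\mathcal Z}\otimes_{\mathcal Z} M \cong (\widehat{\mathcal Z}/\mathcal J^k\widehat{\mathcal Z})\otimes_{\mathcal Z/\mathcal J^k} M \cong M.
\]
Applying this to $M = \mathrm{Ext}^i_{\mathcal H}(\pi_1,\pi_2)$ completes the proof. The main obstacle is minimal: the finite-dimensionality hypothesis is used, if at all, only to sidestep any subtleties in the tensor product with $\widehat{\mathcal Z}$, since $\mathcal J^k$-annihilation already forces the base change to be trivial. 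The heart of the argument is simply the functoriality-based propagation of $\mathcal J^k$-annihilation from one of the arguments of $\mathrm{Ext}$ to the Ext group itself, together with Proposition \ref{prop ext identify} as a black box.
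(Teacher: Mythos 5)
Your proposal is correct and follows essentially the same route as the paper: both arguments reduce to showing that $\mathrm{Ext}^i_{\mathcal H}(\pi_1,\pi_2)$ is annihilated by a power of $\mathcal J$, so that $\widehat{\mathcal Z}\otimes_{\mathcal Z}(-)$ acts trivially on it, and then quote Proposition \ref{prop ext identify}. The only (harmless) difference is that you obtain the $\mathcal J^k$-annihilation directly from the coincidence of the two central actions on $\mathrm{Ext}$ (and, in the second case, from $\widehat{\pi_2}\cong\pi_2$), which shows the finite-dimensionality hypothesis that the paper invokes at this step is not actually needed for the annihilation itself.
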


\begin{proof}
Since $\mathrm{dim}~\mathrm{Ext}^i_{\mathcal H}(\pi_1, \pi_2)<\infty$ and one of $\pi_1$ and $\pi_2$ is in $\widehat{\mathcal C}(\mathcal J)$, it implies that the Ext space is annihilated by some power of $\mathcal J$. Thus $\widehat{\mathcal Z}\otimes_{\mathcal Z}\mathrm{Ext}^i_{\mathcal H}(\pi_1, \pi_2)\cong \mathrm{Ext}^i_{\mathcal H}(\pi_1, \pi_2)$. Now the corollary follows from Proposition \ref{prop ext identify}.
\end{proof}

\subsection{Statement of the Result}
We first recall the following result from \cite{ch24product} that is about representations of finite length. 

\begin{theorem} \label{theorem on ff functor for finite length}
Let $\tau_1$ and $\tau_2$ belong to the category $\mathcal C(\mathcal J)$. Then,
 \[ \Hom_{\mathcal C(\mathcal J)}(\tau_1, \tau_2)\cong \Hom_{\mathcal C(\mathcal J')}(\omega \times \tau_1, \omega \times \tau_2). \] Here the above isomorphism is given by sending $\phi$ to $ \mathrm{Id} \times \phi$. 
 \end{theorem}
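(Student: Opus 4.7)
The overall strategy is to compute $\Hom(\omega \times \tau_1, \omega \times \tau_2)$ using Bernstein's second adjointness together with the geometric lemma, and to isolate a single ``diagonal'' contribution using the cuspidal support constraints defining $\mathcal{C}_\omega$. Injectivity of the natural map $\Phi \colon \phi \mapsto \mathrm{Id}_\omega \times \phi$ is immediate from the exactness and faithfulness of parabolic induction: if $\mathrm{Id}_\omega \times \phi = 0$, then $\omega \times \mathrm{im}(\phi) = 0$, which forces $\mathrm{im}(\phi) = 0$. The content lies entirely in surjectivity.

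For surjectivity, I would apply second adjointness to rewrite
\[
\Hom_{\mathcal{C}(\mathcal J')}(\omega \times \tau_1,\, \omega \times \tau_2) \cong \Hom_{\G_k \times \G_m}\bigl(r^-(\omega \times \tau_1),\, \omega \otimes \tau_2\bigr),
\]
where $r^-$ denotes the Jacquet functor for the parabolic opposite to $P_{k,m}$. The geometric lemma equips $r^-(\omega \times \tau_1)$ with a finite filtration whose graded pieces are (twists of) tensor products of Jacquet modules of $\omega$ with Jacquet modules of $\tau_1$, one of which is the ``diagonal'' piece isomorphic to $\omega \otimes \tau_1$ (coming from the trivial double coset). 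Every non-diagonal graded piece involves a proper Jacquet module of $\omega$, and hence its $\G_k$-factor acquires cuspidal support meeting $\csupp_{\mathbb{Z}}(\omega) - \csupp(\omega)$. The cuspidal support condition (2) defining $\mathcal{C}_\omega$ forces all cuspidal data in $\tau_2$, and therefore in $\omega \otimes \tau_2$, to avoid exactly this set. Consequently $\Hom$ from each non-diagonal piece into $\omega \otimes \tau_2$ vanishes, and iteratively taking the long exact sequences on the filtration collapses the right-hand side to
\[
\Hom(\omega \otimes \tau_1,\, \omega \otimes \tau_2) \cong \mathrm{End}(\omega) \otimes \Hom(\tau_1, \tau_2) \cong \Hom(\tau_1, \tau_2),
\]
using the irreducibility of the Speh representation $\omega$. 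Tracing through the adjunction identifies $\Phi$ with this composite isomorphism.

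The main technical obstacle is the vanishing of $\Hom$ from each non-diagonal graded piece into $\omega \otimes \tau_2$. This requires an explicit analysis of the Jacquet modules (equivalently derivatives) of the Speh representation $\omega$ together with the combinatorics of their cuspidal supports, and a careful check that none of them can match the cuspidal data admissible for $\omega \otimes \tau_2$. Both conditions defining $\mathcal{C}_\omega$ are indispensable here: condition (1) constrains the allowed cuspidal line so that interference is possible only for ``integer shifts'' of $\csupp(\omega)$, and condition (2) then excises precisely the shifted cuspidals produced by proper Jacquet modules of $\omega$, which is what makes the collapse go through.
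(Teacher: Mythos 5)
The paper itself does not prove this statement: it is quoted verbatim from \cite{ch24product} (see also \cite[Theorem 8.1]{ch22}), so the comparison below is with that proof. Your injectivity argument (exactness and faithfulness of parabolic induction) is fine, but the surjectivity argument has a genuine gap at its central step. You claim that every non-diagonal graded piece of the geometric-lemma filtration of $\bar{r}(\omega\times\tau_1)$ has a $\G_k$-factor whose cuspidal support meets $\csupp_{\mathbb{Z}}(\omega)-\csupp(\omega)$, so that condition (2) of $\mathcal C_\omega$ forces the corresponding $\Hom$ into $\omega\otimes\tau_2$ to vanish. This is false: normalized Jacquet functors preserve cuspidal support, so if $\omega_1\otimes\omega_2$ is a constituent of a Jacquet module of $\omega$, then $\csupp(\omega_1)$ and $\csupp(\omega_2)$ are sub-multisets of $\csupp(\omega)$ --- no shifted cuspidal representations are created. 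Since condition (1) of $\mathcal C_\omega$ explicitly allows $\csupp(\tau_1)$ and $\csupp(\tau_2)$ to lie inside $\csupp(\omega)$, a non-diagonal piece of the form $(\omega_1\times\tau_1')\otimes\omega_2$ can have exactly the cuspidal support of $\omega\otimes\tau_2$, and its $\Hom$ into $\omega\otimes\tau_2$ need not vanish. A minimal example: take $\omega=\St_2$ and $\tau_1=\tau_2=\nu^{1/2}$ (permitted by condition (1)); the relevant non-diagonal piece is, up to the precise normalization, $(\nu^{-1/2}\times\nu^{1/2})\otimes\nu^{1/2}$, and $\Hom_{\G_2}(\nu^{-1/2}\times\nu^{1/2},\St_2)\neq 0$ because $\St_2$ is a quotient of $\nu^{-1/2}\times\nu^{1/2}$. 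So the filtration does not collapse onto the diagonal term by cuspidal-support considerations alone, and your long-exact-sequence argument only bounds the $\Hom$ space from above if those auxiliary $\Hom$'s vanish, which they do not.

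The overlap case $\csupp(\tau_i)\cap\csupp(\omega)\neq\emptyset$ is precisely the hard case of the theorem (when the supports are disjoint in the relevant sense the products are typically irreducible and the statement is much easier), and handling it is where the cited proof does real work: the argument in \cite{ch22,ch24product} proceeds through a finer analysis of Bernstein--Zelevinsky derivatives of Speh representations (irreducibility of the highest derivative, socle/cosocle information for products in the spirit of Lapid--M\'inguez) together with an inductive bootstrapping, rather than a single application of second adjointness and the geometric lemma. To repair your sketch you would need, at the very least, to control the extensions between the diagonal piece and the non-vanishing non-diagonal pieces, which is essentially the content of the quoted theorem.
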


We aim to prove the following result, which is the main theorem of this section.

\begin{theorem} \label{theorem on fully faithful functor for infinte length}
Let $\widehat{\tau_1}$ and $\widehat{\tau_2}$ belong to the category $\widehat{\mathcal C}(\mathcal{J})$. Then,
 \[ \mathrm{Hom}_{\widehat{\mathcal C}(\mathcal J)}(\widehat{\tau_1}, \widehat{\tau_2})\cong \mathrm{Hom}_{\widehat{\mathcal C}(\mathcal J')}(\omega \times \widehat{\tau_1}, \omega \times \widehat{\tau_2}). \] where the above isomorphism is given by sending $\phi$ to $\mathrm{Id} \times\phi$.    
\end{theorem}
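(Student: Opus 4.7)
The plan is to reduce the statement for completed objects to the finite-length case in Theorem~\ref{theorem on ff functor for finite length} by expressing both sides as inverse limits of Hom-groups on finite-length quotients and then matching the two inverse systems.

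First I would use the $\phi$-adic completeness of $\widehat{\mathcal{C}}(\mathcal{J})$ to write $\widehat{\tau_2} = \varprojlim_n \widehat{\tau_2}/\mathcal{J}^n\widehat{\tau_2}$. Combined with the observation that any morphism from $\widehat{\tau_1}$ into an object annihilated by $\mathcal{J}^n$ factors through $\widehat{\tau_1}/\mathcal{J}^n\widehat{\tau_1}$, this gives
\[ \mathrm{Hom}_{\widehat{\mathcal{C}}(\mathcal{J})}(\widehat{\tau_1}, \widehat{\tau_2}) \;\cong\; \varprojlim_n \mathrm{Hom}_{\mathcal{C}(\mathcal{J})}\bigl(\widehat{\tau_1}/\mathcal{J}^n\widehat{\tau_1},\; \widehat{\tau_2}/\mathcal{J}^n\widehat{\tau_2}\bigr). \]
Applying Theorem~\ref{theorem on ff functor for finite length} termwise then produces a compatible system of isomorphisms
\[ \mathrm{Hom}_{\mathcal{C}(\mathcal{J})}\bigl(\widehat{\tau_1}/\mathcal{J}^n\widehat{\tau_1},\; \widehat{\tau_2}/\mathcal{J}^n\widehat{\tau_2}\bigr) \;\cong\; \mathrm{Hom}_{\mathcal{C}(\mathcal{J}')}\bigl(\omega\times(\widehat{\tau_1}/\mathcal{J}^n\widehat{\tau_1}),\; \omega\times(\widehat{\tau_2}/\mathcal{J}^n\widehat{\tau_2})\bigr) \]
implemented precisely by $\phi\mapsto \mathrm{Id}\times\phi$; naturality in $n$ upgrades this to an isomorphism of inverse systems.

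The main task will be to identify the inverse limit of the right-hand system with $\mathrm{Hom}_{\widehat{\mathcal{C}}(\mathcal{J}')}(\omega\times\widehat{\tau_1},\; \omega\times\widehat{\tau_2})$. Since parabolic induction is exact on Hecke algebra modules, we have $\omega\times(\widehat{\tau_i}/\mathcal{J}^n\widehat{\tau_i}) = (\omega\times\widehat{\tau_i})/\omega\times(\mathcal{J}^n\widehat{\tau_i})$, so it suffices to prove cofinality on $\omega\times\widehat{\tau_i}$ of the two descending filtrations $\omega\times(\mathcal{J}^n\widehat{\tau_i})$ and $(\mathcal{J}')^n\cdot(\omega\times\widehat{\tau_i})$. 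The key algebraic input is that $\mathcal Z'$ embeds in $\mathcal Z_k \otimes \mathcal Z$ as the $S_{m+k}$-invariants inside the $S_k\times S_m$-invariants of a Laurent polynomial ring, under which $\mathcal{J}'$ is the contraction of $\mathcal{J}'' := \mathcal{J}_\omega \otimes 1 + 1\otimes \mathcal{J}$; as this is a finite ring extension, some power satisfies $(\mathcal{J}'')^N \subseteq \mathcal{J}'\cdot(\mathcal Z_k \otimes \mathcal Z)$. Because $\mathcal{J}_\omega$ annihilates $\omega$, the action of $\mathcal{J}''$ on $\omega\times\widehat{\tau_i}$ reduces to the action of $\mathcal{J}$ on the $\widehat{\tau_i}$-factor, so
\[ \omega\times(\mathcal{J}^n\widehat{\tau_i}) \;\supseteq\; (\mathcal{J}')^n\cdot(\omega\times\widehat{\tau_i}) \;\supseteq\; \omega\times(\mathcal{J}^{nN}\widehat{\tau_i}), \]
establishing the required cofinality.

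I expect this cofinality step to be the main obstacle, as it is the only part of the argument not formally inherited from the finite-length case; it requires a concrete handle on the embedding of centers and the use of the central character of the Speh representation $\omega$ together with exactness of parabolic induction. Once cofinality is established, passing to the inverse limit in the levelwise isomorphism provided by Theorem~\ref{theorem on ff functor for finite length} produces the desired isomorphism, with the map $\phi\mapsto \mathrm{Id}\times\phi$ surviving the limit by construction.
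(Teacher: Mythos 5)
Your overall strategy --- writing both sides as inverse limits over finite-length quotients and applying Theorem \ref{theorem on ff functor for finite length} termwise --- is essentially the route the paper takes (the paper gets injectivity separately from exactness of the product functor, and handles your Step 3 through a comparison of ideals together with the appendix Lemma \ref{lemma on commutation of completion and induction} on commutation of completion and induction). The genuine problem is your justification of the second inclusion in the cofinality sandwich. Finiteness of the extension $\mathcal Z' \subseteq \mathcal Z_k\otimes\mathcal Z$ does \emph{not} force $(\mathcal J'')^N \subseteq \mathcal J'\cdot(\mathcal Z_k\otimes\mathcal Z)$: this would require $\mathcal J''$ to be the unique maximal ideal of $\mathcal Z_k\otimes\mathcal Z$ lying over $\mathcal J'$, whereas in general there is one such maximal ideal for each way of splitting the multiset $\csupp(\omega)\sqcup\csupp(\mathcal J)$ into a size-$k$ and a size-$m$ part, and several splittings occur in exactly the situations allowed by $\mathcal C_\omega$. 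Concretely, take $k=m=1$, $\omega=\mathbbm{1}_{\GL_1}$ (the point $y=1$) and $\mathcal J$ the ideal of a point $a\notin q^{\mathbb Z}$: every element of $\mathcal J'\cdot\mathbb C[y_1^{\pm1},y_2^{\pm1}]$ vanishes at both $(1,a)$ and $(a,1)$, while $(y_1-1)^N\in(\mathcal J'')^N$ takes the nonzero value $(a-1)^N$ at $(a,1)$, so no power of $\mathcal J''$ lies in $\mathcal J'\cdot(\mathcal Z_k\otimes\mathcal Z)$.

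The module-level inclusion you actually need is nevertheless true, and the repair is to localize (or complete) at $\mathcal J''$: the action of $\mathcal Z_k\otimes\mathcal Z$ on $\omega\otimes\widehat{\tau_i}$ factors through $(\mathcal Z_k\otimes\mathcal Z)_{\mathcal J''}$, and $(\mathcal Z_k\otimes\mathcal Z)_{\mathcal J''}/\mathcal J'(\mathcal Z_k\otimes\mathcal Z)_{\mathcal J''}$ is a finite-dimensional local ring, so $(\mathcal J'')^N\subseteq \mathcal J'(\mathcal Z_k\otimes\mathcal Z)_{\mathcal J''}$ for some $N$; combined with the centrality of $\mathcal J'$ in $\mathcal H'$ this yields $\omega\times(\mathcal J^{nN}\widehat{\tau_i})\subseteq(\mathcal J')^{n}(\omega\times\widehat{\tau_i})$, which is the inclusion you want. (This localization/completion is in effect what the paper's appendix, via Lusztig's results on completions, is doing.) One further small point: cofinality of the two filtrations is not quite all of your Step 3 --- you also need $\omega\times\widehat{\tau_2}$ to be complete for the filtration $\omega\times(\mathcal J^{n}\widehat{\tau_2})$, i.e.\ that the product functor commutes with the inverse limit; this follows from the freeness of $\mathcal H'$ over $\mathcal H_k\otimes\mathcal H$ together with the finite-dimensionality of $\omega$ as a Hecke algebra module, and is the content of Lemma \ref{lemma on commutation of completion and induction}. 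With these two repairs your argument goes through and coincides in substance with the paper's proof.
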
 In order to prove that the map above is an isomorphism we deal with the injectivity and surjectivity separately. 

\subsection{Proof of Injectivity}

We first recall the following lemma from category theory.

\begin{lemma} \cite[Chapter 2, Proposition 7.2]{mi65}
Let $\mathcal C_1$ and $\mathcal C_2$ be two abelian categories and let $F: \mathcal C_1 \rightarrow \mathcal C_2$ be a functor satisfying the following two conditions: \begin{enumerate}
    \item $F$ is a an exact functor.
    \item For any object $A$ in $(\mathcal C_1)$, if $F(A)=0$ then $A=0$.
\end{enumerate} Then, the functor $F$ is faithful.
\end{lemma}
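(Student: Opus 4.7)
The plan is to reduce faithfulness of $F$ to the statement that $F$ sends nonzero morphisms to nonzero morphisms. Since $F$ is additive (being exact between abelian categories), for any parallel pair $f_1, f_2 : A \to B$ one has $F(f_1) = F(f_2)$ if and only if $F(f_1 - f_2) = 0$. Hence it suffices to show that $F(f) = 0$ implies $f = 0$ for every morphism $f : A \to B$ in $\mathcal C_1$.

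To analyse such an $f$, I would use the canonical image factorization in the abelian category $\mathcal C_1$: write $f = \iota \circ p$, where $p : A \twoheadrightarrow \mathrm{im}(f)$ is the epic part and $\iota : \mathrm{im}(f) \hookrightarrow B$ is the monic part. Recall that $\mathrm{im}(f)$ can be constructed either as the kernel of the cokernel of $f$ or equivalently as the cokernel of the kernel of $f$. Since $F$ is exact, it commutes with both kernels and cokernels, so this construction is preserved by $F$: we obtain a natural isomorphism
\[
F(\mathrm{im}(f)) \;\cong\; \mathrm{im}(F(f))
\]
in $\mathcal C_2$, together with the induced factorization $F(f) = F(\iota) \circ F(p)$ where $F(p)$ is still epic and $F(\iota)$ is still monic.

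Now assume $F(f) = 0$. Then $\mathrm{im}(F(f)) = 0$ in $\mathcal C_2$, and the isomorphism above forces $F(\mathrm{im}(f)) = 0$. Applying the zero-reflecting hypothesis to the object $\mathrm{im}(f) \in \mathcal C_1$ gives $\mathrm{im}(f) = 0$. Because $f$ factors through its image, this forces $f = 0$, as required.

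The argument has essentially no hard step: the only point that needs care is the verification that $F$ preserves the image factorization, and this is a direct consequence of exactness via the characterization of the image as $\ker(\mathrm{coker}(f))$. If one preferred, one could avoid invoking images by applying exactness to the short exact sequence $0 \to \ker(f) \to A \to \mathrm{coim}(f) \to 0$ and then to $0 \to \mathrm{im}(f) \to B \to \mathrm{coker}(f) \to 0$, concluding in the same way that $F(\mathrm{im}(f)) = 0$ whenever $F(f) = 0$. Either route gives the faithfulness of $F$.
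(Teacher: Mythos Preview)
Your proof is correct. The paper does not actually supply its own proof of this lemma; it merely cites Mitchell \cite[Chapter~2, Proposition~7.2]{mi65}, so there is nothing substantive to compare against beyond noting that your argument --- reducing faithfulness to $F(f)=0 \Rightarrow f=0$ via additivity, then using that exactness forces $F(\mathrm{im}(f))\cong \mathrm{im}(F(f))$ and invoking the zero-reflecting hypothesis --- is the standard one and is essentially what Mitchell does.
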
 Since parabolic induction is an exact functor which takes non-zero objects to non-zero objects, from the above lemma we conclude that, \[ \mathrm{Hom}_{\widehat{\mathcal C}(\mathcal J)}(\widehat{\tau_1}, \widehat{\tau_2})\hookrightarrow \mathrm{Hom}_{\widehat{\mathcal C}(\mathcal J')}(\omega \times \widehat{\tau_1}, \omega \times \widehat{\tau_2}). \]

\subsection{Proof of Surjectivity}

We want to show that, \[ \mathrm{Hom}_{\widehat{\mathcal C}(\mathcal J)}(\widehat{\tau_1}, \widehat{\tau_2})\twoheadrightarrow \mathrm{Hom}_{\widehat{\mathcal C}(\mathcal J')}(\omega \times \widehat{\tau_1}, \omega \times \widehat{\tau_2}). \]

Let $\phi \in \mathrm{Hom}(\omega \times \widehat{\tau_1}, \omega \times \widehat{\tau_2})$. As parabolic induction commutes with the inverse limit (see the appendix for some details), we have:
\[   \mathrm{Hom}_{\widehat{\mathcal C}(\mathcal J')}(\omega \times \widehat{\tau}_1, \omega \times \widehat{\tau}_2) \cong \mathrm{Hom}_{\widehat{\mathcal C}(\mathcal J')}(\widehat{\omega \times \tau_1}, \widehat{\omega \times \tau_2}) .
\]

Thus we may view the map $\phi \in \mathrm{Hom}_{\widehat{\mathcal C}(\mathcal J')}(\widehat{\omega \times \tau_1}, \widehat{\omega \times \tau_2})$. Note that   
\[  (\omega \times \tau_1)/\mathcal J' \cong \omega \times (\tau_1/\widetilde{\mathcal J})
\]
for some ideal $\widetilde{\mathcal J}$ of $\mathcal Z$. Note that $\mathcal J'$ annihilates all irreducible objects with the corresponding central characters. We also have $\widetilde{\mathcal J}\subset \mathcal J$ by using the condition that $\mathcal J'$ annhilates those irreducible objects, and also $\mathcal J^k\subset \widetilde{\mathcal J}$ for some $k$ by using $\cap_r \mathcal J^r=0$. Then, we accordingly have that the $\mathcal J$-adic completion of $\tau_i$ is isomorphic to $\widetilde{\mathcal J}$-adic completion of $\tau_i$.

The above gives us the natural maps, \[ \phi_i: \omega \times (\tau_1/\mathcal{J}^i) \rightarrow \omega \times (\tau_2/\mathcal{J}^i) . \]
Now invoking Theorem \ref{theorem on ff functor for finite length} (\cite[Theorem 1.2]{ch24product}), we conclude that each $\phi_i$ can be written as, \[ \phi_i = \mathrm{Id} \times \psi_i. \] Since the maps $\{ \phi_i \}$ form a compatible system, we can check that the maps $\psi_i$ also form a compatible system of maps.

For if, \[ \phi_{k,j} \circ \phi_{i,k} = \phi_{i,j} \] \[ \implies \mathrm{Id} \times \psi_{k,j} \circ \psi_{i,k} = \mathrm{Id} \times \psi_{i,j}. \] Then invoking Theorem \ref{theorem on ff functor for finite length} we have that, \[ \psi_{k,j} \circ \psi_{i,k} = \psi_{i,j}. \] Hence the maps, $\psi_i: \tau_1/\mathcal{J}^i \rightarrow \tau_2/\mathcal{J}^i$, give rise to a map \[ \psi: \widehat{\tau_1} \rightarrow \widehat{\tau_2}.\] This completes the proof of surjectivity.

\section{On Extensions and Parabolic Inductions} \label{section - on extensions and parabolic induction}

In this section, we shall extend the result of product functor to a completed category.  

\subsection{Embedding Theorem}
Let us fix notations as in Section \ref{subsection on the full faithfullness}. In this section we prove the following. 

\begin{theorem} \label{theorem on extensions and parabolic induction}
Let $\tau_1$ and $\tau_2$ be objects in the category $\widehat{C}(\mathcal{J})$. Then, there is an injection
 \[ \mathrm{Ext}_{\widehat{C}(\mathcal{J})}^i(\tau_1, \tau_2)\hookrightarrow \mathrm{Ext}_{\widehat{C}(\mathcal{J}')}^i(\omega \times \tau_1, \omega \times \tau_2). \] for all integers $i\geq 0$.    
\end{theorem}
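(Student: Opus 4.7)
My approach is to combine the full faithfulness on Hom established in Theorem \ref{theorem on fully faithful functor for infinte length} with projective-resolution techniques in the Noetherian category $\widehat{\mathcal C}(\mathcal J)$, and then argue injectivity by induction on $i$. Via Lemma \ref{lem equiv cate}, $\widehat{\mathcal C}(\mathcal J)$ is equivalent to the category of finitely generated modules over the Noetherian ring $\widehat{\mathcal H}$, so I can choose a projective resolution $P_\bullet \to \tau_1$ by finitely generated free $\widehat{\mathcal H}$-modules. Since parabolic induction is exact, $F := \omega \times -$ produces an acyclic resolution $F(P_\bullet) \to F(\tau_1)$ in $\widehat{\mathcal C}(\mathcal J')$, although the terms $F(P_n)$ are generally not projective there. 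Theorem \ref{theorem on fully faithful functor for infinte length} supplies, termwise, a natural isomorphism of cochain complexes
\[
\mathrm{Hom}_{\widehat{\mathcal C}(\mathcal J')}(F(P_\bullet), F(\tau_2)) \;\cong\; \mathrm{Hom}_{\widehat{\mathcal C}(\mathcal J)}(P_\bullet, \tau_2),
\]
both of which compute $\mathrm{Ext}^i_{\widehat{\mathcal C}(\mathcal J)}(\tau_1, \tau_2)$ in degree $i$. Lifting $\mathrm{Id}_{F(\tau_1)}$ to a chain map from a projective resolution $Q_\bullet \to F(\tau_1)$ in $\widehat{\mathcal C}(\mathcal J')$ into $F(P_\bullet)$ then yields the desired natural comparison map, which concretely sends the class of a Yoneda $i$-extension $\xi$ to the class of $F(\xi)$.

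For injectivity I would proceed by induction on $i$. The case $i = 0$ is precisely Theorem \ref{theorem on fully faithful functor for infinte length}. For $i = 1$, given a short exact sequence $0 \to \tau_2 \to E \to \tau_1 \to 0$ whose $F$-image splits via some section $s : F(\tau_1) \to F(E)$, Theorem \ref{theorem on fully faithful functor for infinte length} lifts $s$ to $\tilde s : \tau_1 \to E$, and faithfulness of $F$ forces $\tilde s$ to be a genuine section. For $i \geq 2$, I would fix a short exact sequence $0 \to K \to P \to \tau_1 \to 0$ with $P$ a finitely generated free $\widehat{\mathcal H}$-module. Since $\mathrm{Ext}^{i-1}(P, \tau_2) = 0 = \mathrm{Ext}^i(P, \tau_2)$, dimension shifting gives an isomorphism $\mathrm{Ext}^{i-1}_{\widehat{\mathcal C}(\mathcal J)}(K, \tau_2) \cong \mathrm{Ext}^i_{\widehat{\mathcal C}(\mathcal J)}(\tau_1, \tau_2)$, and the inductive hypothesis injects the former into $\mathrm{Ext}^{i-1}_{\widehat{\mathcal C}(\mathcal J')}(F(K), F(\tau_2))$. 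A diagram chase comparing the two long exact Ext sequences then reduces the desired injectivity in degree $i$ to the statement that the image of $\mathrm{Ext}^{i-1}_{\widehat{\mathcal C}(\mathcal J')}(F(P), F(\tau_2))$ inside $\mathrm{Ext}^{i-1}_{\widehat{\mathcal C}(\mathcal J')}(F(K), F(\tau_2))$ meets the image of the inductive injection only at zero.

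The principal obstacle is precisely this intersection claim, which is delicate because $F$ does not preserve projectives: the group $\mathrm{Ext}^{i-1}_{\widehat{\mathcal C}(\mathcal J')}(F(P), F(\tau_2))$ is typically nonzero, consistent with the non-surjectivity phenomenon illustrated in the remark after Theorem \ref{theorem on injective mapping on ext modules}. To establish the trivial intersection I would work at the level of Yoneda extensions: given $y \in \mathrm{Ext}^{i-1}_{\widehat{\mathcal C}(\mathcal J)}(K, \tau_2)$ whose image $F(y)$ lies in the restriction image from $\mathrm{Ext}^{i-1}(F(P), F(\tau_2))$, represent $y$ by an explicit $(i-1)$-fold Yoneda extension of $K$ by $\tau_2$ in $\widehat{\mathcal C}(\mathcal J)$; the hypothesis provides a Yoneda extension of $F(P)$ by $F(\tau_2)$ restricting, up to Yoneda equivalence, to the $F$-image of this extension. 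Iteratively applying Theorem \ref{theorem on fully faithful functor for infinte length} to the morphisms along an equivalence chain should then allow one to transfer this extension datum back to $\widehat{\mathcal C}(\mathcal J)$, exhibiting $y$ as coming from $\mathrm{Ext}^{i-1}_{\widehat{\mathcal C}(\mathcal J)}(P, \tau_2) = 0$ and forcing $y = 0$, thereby closing the induction.
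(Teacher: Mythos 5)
Your construction of the comparison map is fine and is essentially the same map the paper produces, and your $i=0,1$ cases are correct. The problem is the inductive step, which is exactly where you admit the difficulty lies: after dimension shifting you must show that the image of the inductive injection $\mathrm{Ext}^{i-1}_{\widehat{\mathcal C}(\mathcal J)}(K,\tau_2)\hookrightarrow \mathrm{Ext}^{i-1}_{\widehat{\mathcal C}(\mathcal J')}(F(K),F(\tau_2))$ meets the image of the restriction map from $\mathrm{Ext}^{i-1}_{\widehat{\mathcal C}(\mathcal J')}(F(P),F(\tau_2))$ only in $0$, and your proposed mechanism for this does not work. A class in $\mathrm{Ext}^{i-1}_{\widehat{\mathcal C}(\mathcal J')}(F(P),F(\tau_2))$ is represented by a Yoneda extension whose intermediate objects, and the objects occurring along any chain of Yoneda equivalences, are arbitrary objects of $\widehat{\mathcal C}(\mathcal J')$; they need not lie in the essential image of $\omega\times(-)$. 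Theorem \ref{theorem on fully faithful functor for infinte length} only identifies $\mathrm{Hom}$ spaces between objects of the form $\omega\times X$ and $\omega\times Y$, so "iteratively applying" it along such a chain is not available, and there is no way to transfer the extension datum back to $\widehat{\mathcal C}(\mathcal J)$. This is precisely the phenomenon the paper's abstract-category remark (the example $\mathcal A\subset\mathcal B$ with the extra object $R$ killing an $\mathrm{Ext}^2$ class) is designed to illustrate: a fully faithful exact functor need not be injective on higher Ext, because new objects in the target category can produce new cobounding data. So the central claim of your induction is left unproved, and the sketched repair would run into exactly this obstruction.

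The paper avoids the issue by never leaving the subcomplex of induced objects. It takes a projective resolution $Q_\bullet\to\omega$ as well as $P_\bullet\to\tau_1$, and uses the total complex $Q_\bullet\times P_\bullet$, which \emph{is} a projective resolution of $\omega\times\tau_1$ in $\widehat{\mathcal C}(\mathcal J')$ (parabolic induction is exact and sends projectives to projectives). A class $f\in\mathrm{Ext}^n_{\widehat{\mathcal C}(\mathcal J)}(\tau_1,\tau_2)$ is sent to the explicit cocycle $\tilde f$ equal to $e\otimes f$ on the component $Q_0\times P_n$ and zero elsewhere; if $\tilde f$ were a coboundary $\tilde g\circ h_{n-1}$, one restricts to $Q_0\times P_n$, uses the vanishing of $\tilde f$ on $\ker(e)\times P_n$ to descend along $e\times\mathrm{Id}$ to maps between the induced objects $\omega\times P_n$, $\omega\times P_{n-1}$, $\omega\times\tau_2$, and only then applies Theorem \ref{theorem on fully faithful functor for infinte length} to conclude $f=\tilde g'\circ d_{n-1}$, a contradiction. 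In other words, the injectivity is proved by a direct cocycle/coboundary computation on a carefully chosen resolution in which the full-faithfulness theorem is only ever invoked for morphisms between objects in the image of the product functor; some argument of this kind is needed to replace the missing step in your induction.
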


\subsection{Ideas for $i=1$ in Theorem \ref{theorem on extensions and parabolic induction}}
We first illustrate some ideas for $i=1$ and $\tau_1$ and $\tau_2$ are of finite length. Indeed, if we start with a short exact sequence in $\widehat{\mathcal C}(\mathcal J)$:
\begin{align} \label{eqn split short exact}
0 \rightarrow  \tau_1  \rightarrow  \tau \rightarrow \tau_2 \rightarrow 0 ,
\end{align}
producing with $\omega$ gives the short exact sequence:
\[  0 \rightarrow \omega \times \tau_1 \rightarrow \omega \times \tau \rightarrow \omega \times \tau_2 \rightarrow 0 .
\]
If the above sequence splits, then we obtain a commutative diagram:
\[  \xymatrix{  0  \ar[r] &  \omega \times  \tau_1 \ar[r] \ar@{=}[d] &  \omega\times \tau \ar[r] \ar[d]^{\cong} &  \omega \times \tau_2 \ar[r] \ar@{=}[d] & 0   \\ 
0 \ar[r] & \omega \times \tau_1 \ar[r] & \omega \times \tau_1\oplus \omega \times \tau_2 \ar[r] & \omega \times \tau_2 \ar[r] & 0  
} .
\]
By applying Theorem \ref{theorem on fully faithful functor for infinte length} and the adjointness of $\omega \times$, one obtains that (\ref{eqn split short exact}) splits.

We now look at this from the viewpoint of projective resolutions, and illustrate the general idea of the following proof. We consider a projective resolution in $\widehat{\mathcal C}(\mathcal J)$:
\[     \ldots \stackrel{d^P_2}{\rightarrow} P_1 \stackrel{d^P_1}{\rightarrow} P_0 \stackrel{d^P_0}{\rightarrow} \tau_1 \rightarrow 0 ,
\]
and similarly we also have a projective resolution in the corresponding completed category,
\[                \ldots      \stackrel{d^Q_2}{\rightarrow}   Q_1     \stackrel{d^Q_1}{\rightarrow}  Q_0       \stackrel{d^Q_0}{\rightarrow}  \omega \rightarrow 0 .
\]
Let $Q_0'$ be the kernel of the map $d_0^Q$. Let $\widetilde{d}^P_1$ be the induced map from $d^P_1$. We consider the following commutative diagram:
\[ \xymatrix{    0 \ar[d]  &  0 \ar[d]  &   &    \\
Q_1\times P_0 +Q_0'\times P_1 \ar@{->>}[r] \ar@{^{(}->}[d] & Q_0' \times P_0 \ar@{^{(}->}[d]  &   \\
                Q_1\times P_0+Q_0\times P_1 \ar[r]^s \ar@{->>}[d]^h & Q_0 \times P_0 \ar[r] \ar@{->>}[d]^{h'} \ar@{-->}[ldd]^{f'} & \omega \times \tau_1 \ar[r] \ar[d]  & 0 \\ 
                \omega \times P_1 \ar[r]^{\widetilde{d}_1^P} \ar@{.>}[d]^f    &    \omega \times P_0 \ar[r] \ar@{-->}[ld]^{f''}  &  \omega \times \tau_1 \ar[r] & 0 \\
                \omega \times \tau_2
                }
\]
The above maps are the natural ones, and for example, the top horizontal map is the projection onto the second factor.

Given $\eta \in \mathrm{Ext}^1_{\widehat{\mathcal C}(\mathcal J)}(\tau_1, \tau_2)$, it can be represented by a map $\underline{f}$ from $P_1$ to $\tau_2$. This induces the map $f=\mathrm{Id} \times \underline{f}:\omega \times P_1\rightarrow \omega \times \tau_1$. Then the composition $f\circ h$ represents an element in $\mathrm{Ext}^1_{\widehat{\mathcal C}(\mathcal J')}(\omega \times \tau_1,\omega \times \tau_2)$.

Suppose there exists $f': Q_0 \times P_0 \rightarrow \omega \times \tau_1$ such that the bottom-left triangle commutes i.e.
\[    f \circ h=f' \circ s . 
\]
In other words, $f\circ h$ represents a zero element in $\mathrm{Ext}^1_{\widehat{\mathcal C}(\mathcal J')}(\omega \times \tau_1, \omega \times \tau_2)$. Note that the solid leftmost arrows form an exact sequence, and so the surjectivity implies that the map $Q_0' \times P_0 \subset \mathrm{ker}~f'$. This then implies that $f'$ factors through $h'$ i.e. there exists $f'': \omega \times P_0 \rightarrow \omega \times \tau_2$ such that $f'=f'' \circ h'$. 

The upshot is that now applying the functorial properties of parabolic inductions, one then has that 
\[   \underline{f}= \underline{f}'' \circ d_1^P 
\]
for some $\underline{f}'': P_0\rightarrow \tau_2$, and hence $\underline{f}$ represents a zero element in $\mathrm{Ext}^1_{\widehat{\mathcal C}(\mathcal J)}(\tau_1, \tau_2)$.

\subsection{Proof of Theorem \ref{theorem on extensions and parabolic induction}} \label{ss thm extensions parabolic ind}

Let us consider a projective resolution of $\tau_1$ in $\widehat{\mathcal C}(\mathcal J)$ as follows,
$$\tau_1 \xleftarrow{ d } P_0 \xleftarrow{ d_0 } P_1 \xleftarrow{ d_1 } P_2 \xleftarrow{ d_2 } P_3 \xleftarrow{ d_3 } \cdots $$ where the projective modules $P_i$ are finitely generated. We form the cochain complex:
$$ 0 \xrightarrow{} \mathrm{Hom}_{\widehat{\mathcal C}(\mathcal J)}(P_0, \tau_2) \xrightarrow{d_0^*} \mathrm{Hom}_{\widehat{\mathcal C}(\mathcal J)}(P_1, \tau_2) \xrightarrow{d_1^*} \mathrm{Hom}_{\widehat{\mathcal C}(\mathcal J)}(P_2, \tau_2) \xrightarrow{d_2^*} \mathrm{Hom}_{\widehat{\mathcal C}(\mathcal J)}(P_3, \tau_2) \xrightarrow{d_3^*} \cdots \, $$
Now we are given that,
$$\mathrm{Ext}^n_{\widehat{\mathcal C}(\mathcal J)}(\tau_1, \tau_2) = \ker(d_n^*) / \mathrm{Im}(d_{n-1}^*) \neq 0\,.$$
Fix a non-zero element $f \in \mathrm{Ext}^n_{\widehat{\mathcal C}(\mathcal J)}(\tau_1, \tau_2)$. Therefore, $f \in \mathrm{Hom}_{\widehat{\mathcal C}(\mathcal J)}(P_n, \tau_2)$ such that $f \circ d_n = 0$ and $f \neq g \circ d_{n-1}$ for any $g \in \mathrm{Hom}_{\widehat{\mathcal C}(\mathcal J)}(P_{n-1}, \tau_2)$. Now consider a projective resolution of $\omega$ given as follows, $$ \omega \xleftarrow{e} Q_0 \xleftarrow{ e_0 } Q_1 \xleftarrow{ e_1 } Q_2 \xleftarrow{ e_2 } Q_3 \xleftarrow{ e_3 } \cdots \,. $$ We obtain a projective resolution of $\omega \times \tau_1$ gives as,
$$\omega \times \tau_1 \xleftarrow{ h } Q_0 \times P_0 \xleftarrow{ h_0 } Q_1 \times P_0 + Q_0 \times P_1 \xleftarrow{ h_1 } Q_2 \times P_0 + Q_1 \times P_1 + Q_0 \times P_2 \xleftarrow{ h_2 } \cdots \,,$$
where the differentials $h_i$ are defined as:
\begin{align*}
& h = e \times d, \\
& h_0 = e_0 \times \mathrm{Id} + \mathrm{Id} \times d_0, \\
& h_1 = e_1 \times \mathrm{Id} + (e_0 \times \mathrm{Id} - \mathrm{Id} \times d_0) + \mathrm{Id} \times d_1,
\end{align*} and so on similarly for higher degrees. Similarly, for this projective resolution of $ \omega \times \tau_1$, we have the following cochain complex:
\begin{align*}
\mathrm{Hom}(Q_0\times P_1+ Q_1\times P_0, \omega \times \tau_2) 
 \xleftarrow{h_0^*} & \mathrm{Hom}(Q_0\times P_0, \omega \times \tau_2) 
 \xleftarrow{h^*} 0 \\
\ldots \xleftarrow{h^*_2}  \mathrm{Hom}(Q_0 \times P_2 +Q_1\times P_1 & + Q_2\times P_0, \omega\times \tau_2 ) 
 \xleftarrow{h^*_1} \\
\mathrm{Hom}(Q_0 \times P_{n} + \ldots +  Q_{n} \times P_0,  \omega \times \tau_2) &
 \xleftarrow{h^*_{n-1}}  \mathrm{Hom}(Q_0\times P_{n-1} + \ldots + Q_{n-1}\times P_0, \omega \times \tau_2) \\
\mathrm{Hom}(Q_0 \times P_{n+1} + \ldots +& Q_{n+1} \times P_0,  \omega \times \tau_2) 
 \xleftarrow{h^*_n}
\end{align*} We want to construct a non-zero element $\tilde{f} \in \ker(h^*_n)$ such that $\tilde{f} \notin \mathrm{Im}(h^*_{n-1})$. This means we need to construct a non-zero element
$\tilde{f} \in \mathrm{Hom}\left( Q_0 \times P_{n} + \cdots + Q_{n} \times P_0, \omega \times \tau_2 \right)$ satisfying: $$\tilde{f} \circ h_n = 0,$$ and such that for any $\tilde{g} \in \mathrm{Hom}(Q_0\times P_{n-1}  +  Q_1 \times P_{n-2} + \ldots + Q_{n-1} \times P_0, \omega \times \tau_2)$ 
$$\tilde{f} \neq \tilde{g} \circ h_{n-1}$$

\subsubsection{Defining the map} We define $\tilde{f} :  Q_0 \times P_{n} + \cdots + Q_{n}  \times P_0 \to  \omega \times \tau_2$ as follows:
$$
\tilde{f}(\mu) = 
\begin{cases}
(e \otimes f) \circ \mu & \text{if } \mu \in  Q_0 \times P_n, \\
0 & \text{if } \mu \in  Q_1 \times P_{n-1} + \cdots +  Q_n \times P_0.
\end{cases}
$$

\subsubsection{Verifying that $\tilde{f}$ lies in the kernel of $h_n^*$} We check that $\tilde{f} \circ h_n$ vanishes on all components:

\begin{enumerate}
    \item[(i)] On $ Q_0 \times P_{n+1}$: 
    For any $\mu \in Q_0 \times P_{n+1}$ and $g \in \mathrm{GL}_n$ with $\mu(g) = q_0 \otimes p_{n+1}$ where $p_{n+1} \in P_{n+1}$, $q_0 \in Q_0$, we have:
    \begin{align*}
        \tilde{f} \circ h_n \circ \mu(g) 
        &=  e(q_0) \otimes f \circ d_n(p_{n+1}) \\
        &= 0 \quad \text{(since $f \circ d_n = 0$)}
    \end{align*} Hence $\tilde{f} \circ h_n \big|_{P_{n+1} \times Q_0} = 0.$
    
    \item[(ii)] On $ Q_1 \times P_n$: Similarly we have that
    \begin{align*}
        \tilde{f} \circ h_n \big|_{ Q_1 \times P_n} = 0
    \end{align*}
    because $\tilde{f}|_{ Q_1 \times P_{n-1}} = 0$ and $e \circ e_0 = 0$.
    
    \item[(iii)] On $Q_2 \times P_{n-1} + \cdots + Q_{n+1} \times  P_0$: We have that
    \begin{align*}
        \tilde{f} \circ h_n \big|_{ Q_2 \times P_{n-1} + \cdots + Q_{n+1} \times P_0} = 0
    \end{align*}
    since by definition $\tilde{f}|_{ Q_1 \times P_{n-1} + \cdots + Q_n \times P_0} = 0$.
\end{enumerate}

Therefore, we conclude that:
\begin{equation*}
    \tilde{f} \circ h_n = 0.
\end{equation*}

\subsubsection{Verifying that $\tilde{f}$ does not lie in the image of $h_{n-1}^*$} Let us assume for the sake of contradiction, that there exists an element  
$\tilde{g} \in \mathrm{Hom}\left( Q_0 \times P_{n-1} + \cdots + Q_{n-1} \times P_0, \omega \times \tau_2\right)$
such that
\begin{equation} \label{eqn one in embedding theorem}
\tilde{f} = \tilde{g} \circ h_{n-1}.    
\end{equation}
Since $\tilde{f}|_{ Q_1 \times P_{n-1} + \cdots + Q_n \times P_0} = 0$, by equation \ref{eqn one in embedding theorem} we have that $\tilde{g} \circ h_{n-1}$ descends to an element in $\mathrm{Hom}\left(Q_0 \times P_{n}, \omega \times \tau_2\right)$.
Recall that we have the short exact sequence, 
\[ 0 \rightarrow \ker(e) \rightarrow Q_0 \xrightarrow{e} \omega \rightarrow 0 \] 
 and so we get, 
\begin{equation} \label{eqn two in embedding theorem}
0 \rightarrow  \ker(e)  \times P_n\rightarrow Q_0  \times P_n\rightarrow \omega  \times P_n\rightarrow 0    
\end{equation} Since $\tilde{f}|_{\ker(e) \times P_n} = 0$, $\tilde{f}$ descends to an element in $\mathrm{Hom}(\omega \times P_{n},\omega \times \tau_2)$ that by abuse of notation, we still denote as $\tilde{f}$. Also, because $\tilde{f}|_{\ker(e) \times P_n} = 0$, by equation \ref{eqn one in embedding theorem} and \ref{eqn two in embedding theorem} we conclude that $\tilde{g}\circ h_{n-1}$ also descends to an element in $\mathrm{Hom}(\omega \times P_{n},\omega \times \tau_2)$ via the map $e \times \mathrm{Id}$. 

Note that $h_{n-1}|_{Q_0 \times P_{n}} = \mathrm{Id} \times d_{n-1}$. Combining with equation \ref{eqn one in embedding theorem} and since $P_n$ is in $\widehat{C}(\mathcal J)$, we can apply Theorem \ref{theorem on fully faithful functor for infinte length} to conclude that $f = \tilde{g}^{\prime}\circ d_{n-1}$ for some $\tilde{g}^{\prime}$ which is the desired contradiction.

We thus obtain a nonzero mapping, \[ \phi \colon \operatorname{Ext}_{\widehat{\mathcal C}(\mathcal J)}^n(\tau_1, \tau_2) \hookrightarrow \operatorname{Ext}_{\widehat{\mathcal C}(\mathcal J')}^n(\omega \times \tau_1, \omega \times \tau_2 )
\] defined by $\phi(f) = \tilde{f}$. The construction immediately shows that $f \neq 0$ if and only if $\tilde{f} \neq 0$, which establishes the injectivity of $\phi$. 
\qed

\subsection{An Ext non-vanishing result} We obtain the following Ext non-vanishing result as a corollary of Theorem \ref{theorem on extensions and parabolic induction}.

\begin{corollary} \label{corollary on nonzeroness 1}
Let $\tau_1$ and $\tau_2$ belong to the category $\widehat{C}(\mathcal{J})$. Then,
 \[ \mathrm{Ext}_{\widehat{\mathcal C}(\mathcal J)}^i(\tau_1, \tau_2)\neq 0 \implies  \mathrm{Ext}_{\widehat{\mathcal C}(\mathcal J')}^i(\omega \times \tau_1, \omega \times \tau_2)\neq 0 \] for all integers $i\geq 0$.        
\end{corollary}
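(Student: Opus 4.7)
The plan is to simply invoke Theorem \ref{theorem on extensions and parabolic induction} as a black box. That theorem supplies an explicit $\mathbb{C}$-linear injection $\phi: \mathrm{Ext}_{\widehat{\mathcal C}(\mathcal J)}^i(\tau_1, \tau_2) \hookrightarrow \mathrm{Ext}_{\widehat{\mathcal C}(\mathcal J')}^i(\omega \times \tau_1, \omega \times \tau_2)$ for every $i \ge 0$. Given any non-zero class $f \in \mathrm{Ext}_{\widehat{\mathcal C}(\mathcal J)}^i(\tau_1, \tau_2)$, the injectivity of $\phi$ forces $\phi(f) \neq 0$, which immediately yields the required non-vanishing on the right-hand side.

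Since the entire content of the corollary is packaged into the injectivity clause of Theorem \ref{theorem on extensions and parabolic induction}, there is essentially nothing more to do beyond this one-line appeal. In practice I would simply write ``Immediate from Theorem \ref{theorem on extensions and parabolic induction}.'' The role of the corollary is really to isolate the qualitative consequence of the embedding theorem that will be fed into the proof of the $\Ext$ analogue of the non-tempered Gan--Gross--Prasad conjecture in Section \ref{section - theorem on ext branching}: one wants to promote a known $\Ext^i$ non-vanishing for a smaller pair to an $\Ext^i$ non-vanishing for the Speh-induced pair, and for that purpose only the existence of a non-zero image in the target matters, not a control of its size.

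The main obstacle is therefore not located in this corollary but in Theorem \ref{theorem on extensions and parabolic induction} itself, whose proof in Section \ref{ss thm extensions parabolic ind} must construct $\phi$ explicitly by comparing a projective resolution of $\tau_1$ in $\widehat{\mathcal C}(\mathcal J)$ with one of $\omega \times \tau_1$ in $\widehat{\mathcal C}(\mathcal J')$, and then exploit Theorem \ref{theorem on fully faithful functor for infinte length} (full faithfulness of $\omega \times -$ on the completed category) to rule out coboundary representatives. Any attempt to obtain a more conceptual derivation of the non-vanishing statement that bypasses the explicit cocycle bookkeeping would require, at minimum, a replacement for that full-faithfulness input; for the present corollary, however, the formal consequence of injectivity between vector spaces is all that is needed.
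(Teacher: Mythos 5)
Your proposal is correct and coincides with the paper's treatment: the corollary is stated as an immediate consequence of Theorem \ref{theorem on extensions and parabolic induction}, since the injectivity of $\phi$ sends any non-zero class in $\mathrm{Ext}_{\widehat{\mathcal C}(\mathcal J)}^i(\tau_1,\tau_2)$ to a non-zero class in $\mathrm{Ext}_{\widehat{\mathcal C}(\mathcal J')}^i(\omega\times\tau_1,\omega\times\tau_2)$. Nothing further is needed.
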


\subsection{Proof of Theorem \ref{theorem on injective mapping on ext modules}}





We now prove a slightly stronger version of Theorem \ref{theorem on injective mapping on ext modules}:

\begin{theorem} 
We use the notations as in Section \ref{subsection on the full faithfullness}. Let $\tau_1$ and $\tau_2$ belong to the category $\widehat{\mathcal C}(\mathcal J)$. Suppose either $\tau_1$ or $\tau_2$ is of finite-length, or more generally suppose that $\mathrm{dim}~\mathrm{Ext}^i_{\mathrm{G}_m}(\tau_1, \tau_2)<\infty$. Then, for any $i$, there is an injection
 \[ \phi: \mathrm{Ext}_{\G_m}^i(\tau_1, \tau_2)\hookrightarrow \mathrm{Ext}_{\G_{m+k}}^i(\omega \times \tau_1, \omega \times \tau_2) . \]
\end{theorem}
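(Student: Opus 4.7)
The map $\phi$ is the natural map induced by the exact functor $\omega \times -$, so what requires proof is injectivity. My plan is to reduce this to the injection
\[
\psi:\mathrm{Ext}^i_{\widehat{\mathcal C}(\mathcal J)}(\tau_1, \tau_2) \hookrightarrow \mathrm{Ext}^i_{\widehat{\mathcal C}(\mathcal J')}(\omega \times \tau_1, \omega \times \tau_2)
\]
from Theorem \ref{theorem on extensions and parabolic induction}, by means of the natural completion maps from the $\mathrm{Rep}$-level Ext groups to the completed-category Ext groups.

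The first step is to prove, under either hypothesis, that the natural completion map
\[
\alpha:\mathrm{Ext}^i_{\G_m}(\tau_1, \tau_2) \longrightarrow \mathrm{Ext}^i_{\widehat{\mathcal C}(\mathcal J)}(\tau_1, \tau_2)
\]
is an isomorphism. By Proposition \ref{prop ext identify} the target is isomorphic to $\widehat{\mathcal Z}\otimes_{\mathcal Z} \mathrm{Ext}^i_{\mathcal H}(\tau_1, \tau_2)$, so it suffices to show that $M := \mathrm{Ext}^i_{\mathcal H}(\tau_1, \tau_2)$ is $\mathcal J$-nilpotent. Since each $\tau_i$ is $\mathcal J$-adically complete, the support of $M$ as a $\mathcal Z$-module is contained in $\{\mathcal J\}$; in the finite-length case $\tau_i$ is annihilated by a power of $\mathcal J$ so $M$ is as well, and in the finite-dimensional case a finite-dimensional $\mathcal Z$-module supported at $\{\mathcal J\}$ is automatically $\mathcal J$-nilpotent. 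This is essentially the content of Corollary \ref{cor non-vanishing}.

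The second step applies Theorem \ref{theorem on extensions and parabolic induction} to produce $\psi$, with the additional check that $\psi$ coincides with the map functorially induced by $\omega \times -$. This follows from the explicit cocycle formula given in Section \ref{ss thm extensions parabolic ind}: $\tilde f = e \otimes f$ on the top summand $Q_0 \times P_n$ of the tensor product projective resolution of $\omega \times \tau_1$, and zero on the other summands, which is precisely the Yoneda pushforward of $f$ under $\omega \times -$.

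Finally, the commutative square with upper row $\phi$, lower row $\psi$, left vertical $\alpha$, and right vertical the analogous natural completion map $\beta:\mathrm{Ext}^i_{\G_{m+k}}(\omega\times\tau_1, \omega\times\tau_2) \to \mathrm{Ext}^i_{\widehat{\mathcal C}(\mathcal J')}(\omega\times\tau_1, \omega\times\tau_2)$ yields the conclusion: if $\phi(\eta) = 0$, then $\psi(\alpha(\eta)) = \beta(\phi(\eta)) = 0$, so $\alpha(\eta) = 0$ by injectivity of $\psi$, and hence $\eta = 0$ since $\alpha$ is an isomorphism. The main obstacle I foresee is the first step, in particular extending Proposition \ref{prop ext identify} to handle $\tau_i$ that are only finitely generated as $\widehat{\mathcal H}$-modules rather than as $\G_m$-representations, and carefully verifying the $\mathcal J$-nilpotency claim under each hypothesis.
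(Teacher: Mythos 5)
Your proposal is correct and follows essentially the same route as the paper: identify $\mathrm{Ext}^i_{\G_m}(\tau_1,\tau_2)$ with $\mathrm{Ext}^i_{\widehat{\mathcal C}(\mathcal J)}(\tau_1,\tau_2)$ via Proposition \ref{prop ext identify} and Corollary \ref{cor non-vanishing} (finite-dimensionality forcing $\mathcal J$-nilpotence, so tensoring with $\widehat{\mathcal Z}$ changes nothing), do the same on the $\G_{m+k}$ side, and then invoke Theorem \ref{theorem on extensions and parabolic induction}. Your extra commutative-square/naturality check is fine but not needed for the statement as the paper proves it, since the paper simply defines $\phi$ as the composite of these identifications with the completed-category embedding; you also correctly flag the same finite-generation subtlety (over $\mathcal H$ versus over $\widehat{\mathcal H}$) that the paper's own proof glosses over.
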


\begin{proof}
Again, we can view $\tau_1$ and $\tau_2$ as $\mathcal H$-modules. As $\mathcal H$ is noetherian, $\tau_1$ admits a finitely-generated free $\mathcal H$-resolution. Now, as the space $\mathrm{Hom}_{\mathcal H}(\mathcal H, \tau_2)\cong \tau_2$, and so $\mathrm{Ext}^i_{\mathcal H}(\tau_1, \tau_2)$ is also finitely-generated.  Since $\tau_1$ or $\tau_2$ is of finite length, we have that either $\tau_1$ or $\tau_2$ is annihilated by some powers of $\mathcal J$. We then also have that the Ext-groups $\mathrm{Ext}^i_{\mathrm{G}_m}(\tau_1, \tau_2)$ are annhilated by some powers of $\mathcal J$ and so are finite-dimensional.

Now, the finite-dimensionality of Ext-groups implies that 
$\widehat{\mathcal Z}\otimes_{\mathcal Z}\mathrm{Ext}^i_{\mathcal H}(\tau_1, \tau_2) \cong \mathrm{Ext}^i_{\mathcal H}(\tau_1, \tau_2)$. Similarly, $\widehat{\mathcal Z'}\otimes_{\mathcal Z'}\mathrm{Ext}^i_{\mathcal H'}(\omega \times \tau_1, \omega \times \tau_2) \cong \mathrm{Ext}^i_{\mathcal H'}(\omega \times \tau_1, \omega \times\tau_2)$. We now have that the theorem follows from Theorem \ref{theorem on extensions and parabolic induction} and Proposition \ref{prop ext identify}.
\end{proof}

\section{Notion of Strong Ext Relevance} \label{section -  definition of strong ext relevance}

In this Section, we recall the definition of strong $\Ext$ relevance of Arthur parameters as introduced in \cite{qa25}. The notion of strong $\Ext$ relevance involves the Aubert-Zelevinsky involution. Hence we first recall the definition of the Aubert-Zelevinsky involution.

\subsection{The Aubert-Zelevinsky Involution} 

Given an irreducible representation $\pi$ of $\GL_n(F)$ the Aubert-Zelevinsky involution (denoted by the symbol $D$) takes $\pi$ to another irreducible representation of $\GL_n(F)$. This involution can also be defined in the more general context of any reductive $p$-adic group. 

Let $G$ be a reductive $p$-adic group. Fix a minimal parabolic $P_0$ of $G$. Given a smooth irreducible representation $\pi$ of $G$, consider the virtual representation, \[ \widehat{D}_G(\pi) = \sum_{P} (-1)^{\mathrm{rank}(M)} \operatorname{Ind}_P^G \big( r_P (\pi)), \] where the above sum is taken over all parabolic subgroups $P=MN$ of $G$ containing $P_0$.

It is known from the work of Aubert \cite{au96} that there $\widehat{D}_G(\pi)$ is an irreducible representation of $G$ upto a sign, that is, there exists $\epsilon\in {\pm 1}$ such that $D(\pi) = \epsilon \widehat{D}_G(\pi)$ is an irreducible representation of $G$. We define $D(\pi)$ to be the Aubert-Zelevinsky dual of $\pi$. 

\subsubsection{Aubert-Zelevinsky Dual of a Speh Representation}

We recall the following result about the the Aubert-Zelevinsky dual of a Speh representation. 
\begin{theorem}\cite[Theorem B]{ta86} \label{Aubert-Zelevinsky Dual of a Speh Representation}
Let $a,b\in \mathbb{Z}_{\geq 0}$ and $\rho\in \Irr$ be a unitarizable cuspidal representation. Let $\pi=u_{\rho}(a,b)$ be a Speh representation of $\G_n$. Then, \[ D(u_{\rho}(a,b)) = u_{\rho}(b,a). \]   
\end{theorem}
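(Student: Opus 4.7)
The plan is to derive the identity directly from the two equivalent constructions of $u_\rho(a,b)$ recalled in Subsection 3.6, by applying the Aubert-Zelevinsky involution $D$ to the Langlands quotient description of $u_\rho(a,b)$ and matching the result with the Zelevinsky submodule description of $u_\rho(b,a)$. The key is that the parameters $(a,b)$ in the two descriptions get swapped under the duality.

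First I would record three preliminary properties of $D$, all of which can be read off Aubert's alternating sum definition. (i) $D$ commutes with $\nu^s$-twisting, since $\nu^s$ is a character of the ambient group. (ii) On the Grothendieck group level $D$ is compatible with parabolic induction, i.e.\ $D(\pi_1 \times \pi_2) = D(\pi_1) \times D(\pi_2)$ up to an overall sign, which follows from transitivity of induction and Mackey theory applied to the defining formula $\widehat{D}_G(\pi) = \sum_P(-1)^{\mathrm{rank}(M)}\mathrm{Ind}_P^G r_P(\pi)$. (iii) For a single segment $\Delta$, $D(Q(\Delta)) = Z(\Delta)$ and $D(Z(\Delta))=Q(\Delta)$; in particular $D(\delta_\rho(a)) = Z(\Delta(\rho,a))$ since $\delta_\rho(a) = Q(\Delta(\rho,a))$. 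Property (iii) can be proved by induction on the length of $\Delta$ using the explicit Jacquet modules of the essentially square-integrable $Q(\Delta)$ and of the Zelevinsky module $Z(\Delta)$ (the classical fact $D(\mathbbm{1}_n) = \mathrm{St}_n$ is the base case).

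Next, apply $D$ to the first defining description of $u_\rho(a,b)$ as the unique irreducible quotient of
\[
\Pi_Q := \nu^{(b-1)/2}\delta_\rho(a) \times \nu^{(b-3)/2}\delta_\rho(a)\times \cdots \times \nu^{-(b-1)/2}\delta_\rho(a).
\]
By (i)--(iii), up to sign $D$ sends $\Pi_Q$ to the induced module
\[
\Pi_Z := Z\bigl(\nu^{(b-1)/2}\Delta(\rho,a)\bigr) \times Z\bigl(\nu^{(b-3)/2}\Delta(\rho,a)\bigr) \times \cdots \times Z\bigl(\nu^{-(b-1)/2}\Delta(\rho,a)\bigr),
\]
which is precisely the standard module appearing in the second defining description of $u_\rho(b,a)$ as its unique irreducible submodule. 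So once one knows that $D$ interchanges the unique irreducible cosocle of $\Pi_Q$ with the unique irreducible socle of $\Pi_Z$, the identity $D(u_\rho(a,b)) = u_\rho(b,a)$ follows.

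The main obstacle is making this socle--cosocle interchange rigorous, since $\widehat{D}$ is a priori only a Grothendieck-group operation. I would handle this by combining two observations: first, that $\Pi_Z$ admits a \emph{unique} irreducible submodule (the inducing data forms an $a \times b$ ``rectangle'' of segments arranged so that no pair is linked in the wrong direction, forcing the socle to be multiplicity-free and unique); second, that $\widehat{D}(u_\rho(a,b))$ must occur as an irreducible constituent of $\Pi_Z$ with multiplicity one. Together with the analogous uniqueness of $u_\rho(b,a)$ as the socle, this will force $D(u_\rho(a,b)) = u_\rho(b,a)$. An alternative is to invoke compatibility of $D$ with the contragredient functor, which itself swaps socle and cosocle, and exploit the self-duality of the rectangular inducing data up to a $\nu$-twist.
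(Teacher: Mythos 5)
Your reduction of the problem is fine as far as it goes: $D$ commutes with unramified twists, Aubert's theorem gives $D(\pi_1\times\pi_2)=D(\pi_1)\times D(\pi_2)$ in the Grothendieck group, and $D(Q(\Delta))=Z(\Delta)$ is classical, so indeed $[D(\Pi_Q)]=[\Pi_Z]$ with $\Pi_Z$ the Zelevinsky standard module whose socle is $u_\rho(b,a)$. The genuine gap is in the step you yourself flag as the main obstacle, and your proposed fix does not close it. From ``$D(u_\rho(a,b))$ occurs in $[\Pi_Z]$ with multiplicity one'' together with ``$\Pi_Z$ has a unique irreducible submodule'' you cannot conclude that $D(u_\rho(a,b))$ \emph{is} that submodule: generically every irreducible constituent of $\Pi_Z$ occurs with multiplicity one, and a Grothendieck-group identity gives no information about which constituent sits in the socle. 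Identifying the constituent is precisely the content of the theorem; it is normally done either by Tadi\'c's original argument or by the M\oe glin--Waldspurger algorithm computing the Zelevinsky involution on multisegments, for which the dual of the rectangular multisegment attached to $u_\rho(a,b)$ is the transposed rectangle, giving $Z$ of the transposed rectangle, i.e.\ $u_\rho(b,a)$. Your alternative suggestion (compatibility of $D$ with the contragredient, ``which swaps socle and cosocle'') has the same defect: $D$ is a priori only defined on the Grothendieck group, so socle/cosocle statements about $D$ are exactly what needs to be proved, not something that can be invoked. Note also that your step (iii), $D(Q(\Delta))=Z(\Delta)$, is itself a special case of this identification problem, so the sketched induction there would need the same kind of input.

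For comparison with the paper: the paper gives no proof at all, it simply cites Tadi\'c \cite[Theorem B]{ta86}. So an acceptable write-up would either cite that result (or the M\oe glin--Waldspurger description of the involution) directly, or supply an actual identification argument at the socle; as written, your proposal stops one step short of the theorem.
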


\begin{remark}
The theorem admits a natural restatement in terms of Arthur parameters. Let $ u_{\rho}(a,b) $ be a Speh representation with Arthur parameter
$$\phi \otimes V_a \otimes V_b, $$
where $\phi$ is an irreducible representation of $ W_F $ attached to the cuspidal representation $\rho$. Then, the Aubert--Zelevinsky dual of $ u_{\rho}(a,b) $ is $ u_{\rho}(b,a) $, whose Arthur parameter is
$$\phi \otimes V_b \otimes V_a.$$
\end{remark}

\subsection{Strong Ext Relevance}
We recall the notion of \textit{strong} $\Ext$ relevance for pairs of Arthur parameters as in \cite{qa25}. We remark that this definition of strong $\Ext$ relevance is informed by the original GGP notion of relevance as well as a duality theorem due to Nori and Prasad \cite[Theorem 2]{np20}.

\begin{definition} \label{definition of strong ext relevance}
 Let $m, n \in \mathbb{Z}_{\geq 0}$ and let $\pi_1$ and $\pi_2$ be Arthur type representations of $\GL_m(F)$ and $\GL_n(F)$ respectively. Let $\A(\pi_1)$ and $\A(\pi_2)$ denote their respective Arthur parameters. We say that $\pi_1$ and $\pi_2$ are \textit{strong} $\Ext$ relevant if there exist admissible homomorphisms $\{\phi_i\}_{i=1}^{i=r+s}$ and $\{\psi_i\}_{i=1}^{i=t+u}$ of $W_F$ with bounded image and positive integers $a_1,a_2,\cdots a_r,$ $ b_{r+1},b_{r+2},\cdots b_{r+s},$ $ c_1,c_2,\cdots c_{r+s},$ $d_1,d_2,\cdots d_t, e_{t+1},e_{t+2},$ $\cdots e_{t+u},$ $f_1,f_2,\cdots f_{t+u}$ such that, \[ \A(\pi_1) = \sum_{i=1}^{r} \phi_i \otimes V_{c_i} \otimes V_{a_i} \oplus \sum_{i=r+1}^{r+s} \phi_i \otimes V_{c_i} \otimes V_{b_i - 1} \oplus \sum_{i=1}^{t} \psi_i \otimes V_{f_i} \otimes V_{d_i}  \oplus  \sum_{i=t+1}^{t+u} \psi_i \otimes V_{e_i-1} \otimes V_{f_i} \] and, \[ \A(\pi_2) = \sum_{i=1}^{r} \phi_i \otimes V_{c_i} \otimes V_{a_i-1}  \oplus\sum_{i=r+1}^{r+s} \phi_i \otimes V_{c_i} \otimes V_{b_i} \oplus \sum_{i=1}^{t} \psi_i \otimes V_{d_i-1} \otimes V_{f_i} \oplus  \sum_{i=t+1}^{t+u} \psi_i \otimes V_{f_i} \otimes V_{e_i}. \]   
\end{definition}

\begin{remark}
Note that if one were to restrict oneself to the first two terms in the above definition, one would precisely recover the original GGP notion of relevant $A$-parameters.   
\end{remark}

\begin{remark}
It is shown in \cite{qa25} that if $\pi_1$ and $\pi_2$ are products of tempered and anti-tempered (Aubert-Zelevinsky dual of tempered) representations then strong $\Ext$ relevance gives a necessary and sufficient condition for $\Ext$ branching.   
\end{remark}

\subsection{Reformulation in Representation Theoretic terms}
We also recall the representation-theoretic reformulation of the above definition of strong $\Ext$ relevance from \cite[Definition 1.2]{qa25}. Let $\pi$ be an irreducible representation of $\G_n(F)$. We denote its highest derivative by $\pi^{(h)}$, and for convenience, set $\pi^{-} = \nu^{1/2}\pi^{(h)}$. According to \cite[Theorem 14]{lm14}, the highest derivative of $\nu^{1/2}u_{\rho}(a,b)$ is $u_{\rho}(a,b-1)$, i.e., $u_{\rho}(a,b)^- = u_{\rho}(a,b-1).$ This leads to the following equivalent, representation-theoretic definition of strong $\Ext$ relevance.

\begin{definition}  \cite[Definition 1.2]{qa25}
 Let $m, n \in \mathbb{Z}_{\geq 0}$ and let $\pi_1$ and $\pi_2$ be Arthur type representations of $\GL_m(F)$ and $\GL_n(F)$ respectively. Then $\pi_1$ and $\pi_2$ are strong $\Ext$ relevant if there exist Speh representations, 
 \[ \pi_{m,1},\pi_{m,2},\cdots,\pi_{m,r},\pi_{n,1},\pi_{n,2},\cdots,\pi_{n,s}\] and, \[ \pi_{p,1},\pi_{p,2},\cdots,\pi_{p,t},\pi_{q,1},\pi_{q,2},\cdots,\pi_{q,u}\] such that, 
 \[ \pi_1 = \pi_{m,1}\times \cdots \pi_{m,r} \times \pi_{p,1}^- \times \cdots\times \pi_{p,t}^- \times \pi_{n,1}\times \cdots\times \pi_{n,s}  \times  D(\pi_{q,1}^-)\times \cdots\times D(\pi_{q,u}^-) \] and, \[ \pi_2 = \pi_{m,1}^-\times \cdots \pi_{m,r}^- \times \pi_{p,1} \times \cdots\times \pi_{p,t}  \times  D(\pi_{n,1}^-)\times \cdots\times D(\pi_{n,s}^-) \times \pi_{q,1} \times \cdots\times \pi_{q,u}. \]
\end{definition}

\section{Proof of Theorem \ref{theorem on non tempered ext branching}} \label{section - theorem on ext branching}

\subsection{Prelimnaries} We first recall and collect some preliminary lemmas that we shall need in the course of our proof of Theorem \ref{theorem on non tempered ext branching}.

\subsubsection{K\"unneth Formula}

We shall require the following K\"unneth formula in the course of our calculations of $\Ext$ modules. 
\begin{theorem}(\cite[Theorem 3.5]{pr24}) \label{kunneth formula}
Let $H_1$ and $H_2$ be $p$-adic groups and suppose that $H_1$ is reductive. Let $E_1$ and $F_1$ (resp. $E_2$ and $F_2$) be smooth representations of $H_1$ (resp. $H_2$). If $E_1$ and $F_1$ have finite lengths then, \[ \Ext^i_{H_1\times H_2} (E_1\otimes E_2,F_1\otimes F_2) = \sum_{k=0}^i \Ext^k_{H_1} (E_1,F_1) \otimes \Ext^{i-k}_{H_2} (E_2, F_2). \]
\end{theorem}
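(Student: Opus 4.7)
The plan is to derive the formula from a standard double-complex computation, combining compatible projective resolutions of $E_1$ and $E_2$ with the algebraic K\"unneth formula for complexes of $\mathbb C$-vector spaces. First I would pick a projective resolution $P_\bullet\to E_1$ in $\Rep(H_1)$ by \emph{finitely generated} projectives; this is possible because $E_1$ has finite length and the category of smooth representations of the reductive $p$-adic group $H_1$ is Noetherian on finitely generated objects, so every such object admits a resolution by finite direct sums of compact inductions $\cind_{K}^{H_1}(V)$ with $K$ a compact open subgroup and $V$ a finite dimensional smooth $K$-module. Simultaneously I would choose any projective resolution $Q_\bullet\to E_2$ in $\Rep(H_2)$.

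Next I would form the external tensor product double complex $\{P_p\otimes Q_q\}$ and verify two points: each $P_p\otimes Q_q$ is projective in $\Rep(H_1\times H_2)$, and its totalization is a resolution of $E_1\otimes E_2$. The first follows, after reducing to basic projectives, from the identification
\[ \cind_{K_1}^{H_1}(V_1)\otimes \cind_{K_2}^{H_2}(V_2)\;\cong\;\cind_{K_1\times K_2}^{H_1\times H_2}(V_1\otimes V_2) \]
together with Frobenius reciprocity. The second is automatic because tensoring over $\mathbb C$ is exact, so the elementary K\"unneth formula for complexes of vector spaces makes the totalization acyclic in positive degrees and recovers $E_1\otimes E_2$ in degree zero.

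The crucial step is the Hom-tensor identification
\[ \Hom_{H_1}(P_p,F_1)\otimes \Hom_{H_2}(Q_q,F_2)\;\xrightarrow{\sim}\;\Hom_{H_1\times H_2}(P_p\otimes Q_q,F_1\otimes F_2) . \]
Reducing to $P_p=\cind_{K_1}^{H_1}(V_1)$ with $V_1$ finite dimensional, Frobenius reciprocity converts $\Hom_{H_1}(P_p,F_1)$ into $\Hom_{K_1}(V_1,F_1)$, which is \emph{finite dimensional} because $F_1$ has finite length and is therefore admissible. With one of the two tensor factors finite dimensional, the tensor product can be pulled outside the remaining $\Hom_{H_2}$, yielding the desired isomorphism without further hypotheses on $Q_q$ or $F_2$. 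Applying this termwise identifies the Hom double complex with the external tensor product of complexes $\Hom_{H_1}(P_\bullet,F_1)\otimes \Hom_{H_2}(Q_\bullet,F_2)$.

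Finally, since we work over the field $\mathbb C$, the algebraic K\"unneth formula for the cohomology of a tensor product of complexes has no Tor contributions, so the cohomology of the total complex decomposes as $\bigoplus_{k=0}^i\Ext^k_{H_1}(E_1,F_1)\otimes \Ext^{i-k}_{H_2}(E_2,F_2)$, which is precisely the stated identity. The main obstacle will be the Hom-tensor identification in the previous paragraph: both finite-length hypotheses enter there essentially, $E_1$ to secure a finitely generated projective resolution on the $H_1$-side so that Frobenius reciprocity applies termwise to finite dimensional inducing modules, and $F_1$ to ensure admissibility so that $\Hom_{K_1}(V_1,F_1)$ is finite dimensional; without these one cannot in general extract the tensor product from the inner Hom, and the whole double-complex decomposition collapses.
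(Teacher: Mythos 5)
The paper offers no proof of this statement---it is imported verbatim from \cite[Theorem 3.5]{pr24}---and your argument is precisely the standard double-complex proof underlying that reference, so there is nothing divergent to compare. Your reasoning is correct: finite length of $E_1$ together with Bernstein's Noetherianity for the reductive group $H_1$ yields a resolution by finitely generated projectives built from $\cind_{K}^{H_1}(V)$ with $V$ finite dimensional, admissibility of the finite-length $F_1$ makes $\Hom_{K}(V,F_1)$ finite dimensional so that the termwise identification $\Hom_{H_1}(P_p,F_1)\otimes\Hom_{H_2}(Q_q,F_2)\cong\Hom_{H_1\times H_2}(P_p\otimes Q_q,F_1\otimes F_2)$ holds with no hypotheses on $E_2,F_2$, and the K\"unneth formula over $\mathbb{C}$ (no Tor terms) gives the stated decomposition of $\Ext^i_{H_1\times H_2}(E_1\otimes E_2,F_1\otimes F_2)$.
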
 

\subsubsection{Duality Theorem} We have the following cohomological duality theorem (see \cite[Theorem 2]{np20}) due to Nori and Prasad.

\begin{theorem} \label{duality theorem of nori and prasad}
Let $\pi_1$ be a smooth irreducible representations of $\G_n$ and $\pi_2$ be any smooth representation of $\G_n$. Then for any integer $i\geq 0$, \[ \Ext^i_{\G_n}(\pi_1,\pi_2)^{\vee} \simeq \Ext^{d(\pi_1)-i}_{\G_n}(\pi_2,D(\pi_1)).\] Here $d(\pi_1)$ is the split rank of the Levi subgroup of $\G_n$ which carries the cuspidal support of $\pi_1$. 
\end{theorem}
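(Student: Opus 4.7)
The plan is to establish the duality by realising $D(\pi_1)$, up to the homological shift by $d(\pi_1)$, as the value of a Serre functor on the derived category of the Bernstein block containing $\pi_1$. First I would use the Bernstein decomposition to reduce both sides of the proposed isomorphism to a single block $\mathcal R_{\mathfrak s}(\G_n)$, and transport the question through the equivalence recalled in Section \ref{s berstein decomp} to the category of modules over a finite-type affine Hecke algebra $\mathcal H = \mathcal H_{\mathfrak s}$. Under this equivalence $\pi_1$ becomes a simple $\mathcal H$-module $M_1$ whose central character reflects its cuspidal support, and the integer $d(\pi_1)$ becomes the combinatorial rank of the split torus sitting inside $\mathcal H$ (that is, the total number of $y_i$-generators across the tensor factors $\mathcal H(n_i,q_i)$).

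Second, I would construct an explicit projective resolution of $M_1$ of length exactly $d(\pi_1)$. The natural candidate is a Koszul-type resolution built from the commutative subalgebra $\mathbb{C}[y_1^{\pm 1},\dots,y_n^{\pm 1}] \subset \mathcal{H}$, which on the representation-theoretic side corresponds to the Schneider--Stuhler resolution associated with the Bruhat--Tits building of the Levi carrying the cuspidal support. This is the source of the shift: the projective dimension of $M_1$ inside the block equals the rank of the split torus supporting the cuspidal line of $\pi_1$, giving a finite resolution of length precisely $d(\pi_1)$.

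Third, I would set up a Serre-duality pairing. The affine Hecke algebras $\mathcal{H}(n,q)$ of type $A$ carry a canonical symmetrising form, and the induced form on $\mathcal{H}_{\mathfrak s}$ makes it Frobenius in the appropriate graded sense. Combined with the finite projective resolution this yields a natural perfect pairing
\[
\mathrm{Ext}^i_{\mathcal{H}}(M_1,M_2)\otimes \mathrm{Ext}^{d(\pi_1)-i}_{\mathcal{H}}(M_2, \Sigma(M_1)) \longrightarrow \mathbb{C},
\]
where $\Sigma$ is the Serre functor on the block. To finish, I would identify $\Sigma(M_1)$ with $D(\pi_1)$. The entry point is the Aubert alternating-sum formula
\[
D(\pi_1) \;=\; \pm \sum_{P} (-1)^{\mathrm{rank}(M)}\, \Ind_P^{\G_n} r_P(\pi_1),
\]
which I would lift from an equality in the Grothendieck group to an isomorphism in the derived category of $\mathcal R_{\mathfrak s}(\G_n)$ using the spectral sequence attached to the total complex of parabolic inductions of Jacquet restrictions, together with second adjointness of parabolic induction.

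The main obstacle will be the last step: upgrading the Aubert formula to an honest natural isomorphism $\Sigma \simeq D[-d(\pi_1)]$ of functors on the derived category, rather than just agreement on classes in $K_0$. I would handle this by first verifying the identification on a set of compactly induced projective generators of the block, where both $\Sigma$ and $D$ admit explicit combinatorial descriptions and their comparison reduces to a calculation with standard modules on the Hecke-algebra side; then extending by naturality to all of $\mathcal R_{\mathfrak s}(\G_n)$. Once this identification is secured, applying the Serre pairing to $M_2 = \pi_2$ gives the stated isomorphism, with no finite-length hypothesis on $\pi_2$ required since the pairing is natural in the second argument and the resolution of $M_1$ is finite and by finitely generated projectives.
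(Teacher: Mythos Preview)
The paper does not give its own proof of this statement: it is quoted verbatim as a preliminary result, attributed to Nori and Prasad \cite[Theorem 2]{np20}, and used as a black box in Section~\ref{section - theorem on ext branching}. So there is nothing in the paper to compare your proposal against.

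That said, your outline is broadly in the spirit of the original Nori--Prasad argument, which also proceeds by exhibiting the Aubert--Zelevinsky involution as (a shift of) a Serre functor on the block, with the Schneider--Stuhler resolution supplying the finite projective resolution of length $d(\pi_1)$. One point where your sketch diverges from their actual method: you propose to obtain the Serre pairing from a ``symmetrising form'' making $\mathcal H_{\mathfrak s}$ Frobenius in a graded sense. The affine Hecke algebras in question are of infinite dimension over $\mathbb C$ and are not Frobenius algebras in the usual sense, so this step would need to be made precise (e.g.\ via a trace pairing at the level of the completed block, or via Bernstein's second adjunction and the contragredient as in the original paper) rather than invoked as a standard Frobenius-algebra argument. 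The step you flag as the main obstacle---lifting the alternating-sum formula for $D$ to a derived-category isomorphism identifying it with the Serre functor---is indeed the heart of the matter, and in \cite{np20} it is handled essentially as you suggest, by checking compatibility on projective generators coming from compact induction and then propagating.
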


\subsubsection{Transfer Lemma}

The next lemma provides a bridge between the branching problem for the pair $(\G_n, \G_{n-1})$ and that for $(\G_{n-1}, \G_{n-2})$, allowing us to pass information in either direction. This connection will play an important role in the proof of Theorem \ref{theorem on non tempered ext branching}.

\begin{lemma}\cite[Proposition 4.1]{ch22} \label{transfer lemma}
Let $\pi_1$ and $\pi_2$ be irreducible representations of $\G_n$ and $\G_{n-1}$ respectively. Then for any cuspidal representation $\sigma$ of $\G_2$ such that $\sigma \not \in \csuppline(\nu^{-1/2}\pi_1^{\vee})\cup \csuppline(\pi_2)$ we have that, \[ \Ext^i_{\G_{n-1}}(\pi_1,\pi_2^{\vee}) = \Ext^i_{\G_{n}}(\pi_2 \times \sigma,\pi_1^{\vee}) \] for all integers $i\geq 0$.  
\end{lemma}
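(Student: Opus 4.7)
The plan is to analyze the right-hand side via the Bernstein--Zelevinsky filtration and isolate the unique contributing piece using the two cuspidal support hypotheses on $\sigma$. Interpreting $\pi_2 \times \sigma \in \Rep(\G_{n+1})$ and restricting along the standard embedding $\G_n \hookrightarrow \G_{n+1}$, I would first apply the Bernstein--Zelevinsky filtration recalled in Section \ref{subsection on derivatives} to $(\pi_2 \times \sigma)|_{\G_n}$. This produces a finite filtration whose graded subquotients take the form $\nu^{1/2}(\pi_2 \times \sigma)^{(j)} \times \cind_{U_{j-1}}^{\G_{j-1}}\psi_{j-1}$, and hence a long exact sequence (or spectral sequence) computing $\Ext^i_{\G_n}(\pi_2 \times \sigma, \pi_1^{\vee})$ in terms of the $\Ext$-groups of these pieces against $\pi_1^{\vee}$.

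Next, I would apply the Leibniz rule for BZ derivatives to decompose each $(\pi_2 \times \sigma)^{(j)}$ further: it admits a filtration whose graded subquotients are (up to twists by powers of $\nu$) of the form $\pi_2^{(a)} \times \sigma^{(b)}$ with $a+b=j$. Since $\sigma$ is cuspidal of $\G_2$, its only non-zero BZ derivatives are $\sigma^{(0)} = \sigma$ and $\sigma^{(2)} = \mathbbm{1}$. The pieces coming from the $\sigma^{(0)}=\sigma$ branch all contain $\nu^{1/2}\sigma$ in their cuspidal support; the hypothesis $\sigma \notin \csuppline(\nu^{-1/2}\pi_1^{\vee})$ then places each such piece in a Bernstein block of $\Rep(\G_n)$ disjoint from that of $\pi_1^{\vee}$, so all its $\Ext$-groups against $\pi_1^{\vee}$ vanish. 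Only the $\sigma^{(2)}=\mathbbm{1}$ branch contributes, producing surviving graded subquotients of the form $\nu^{1/2}\pi_2^{(a)} \times \cind_{U_{j-1}}^{\G_{j-1}}\psi_{j-1}$ with $j=a+2$.

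To finish, I would apply Frobenius reciprocity / second adjointness to each surviving piece and compare the resulting long exact sequence term-by-term with the one obtained by applying the BZ filtration to $\pi_1|_{\G_{n-1}}$ on the left-hand side. The second hypothesis $\sigma \notin \csuppline(\pi_2)$ enters here to ensure that the derivatives $\pi_2^{(a)}$ remain in a single Bernstein block that pairs cleanly with the derivatives $\pi_1^{(k)}$ appearing in the LHS filtration, preventing cross-contamination between distinct graded layers and forcing the spectral sequences on both sides to have the same $E_\infty$-page. The main obstacle is precisely this final matching: tracking the surviving pieces simultaneously through the BZ filtration of $(\pi_2\times\sigma)|_{\G_n}$ and the BZ filtration of $\pi_1|_{\G_{n-1}}$, using Frobenius reciprocity to align the resulting Whittaker-inductions, and checking that the two cuspidal support conditions are jointly strong enough to collapse all higher differentials into the single clean isomorphism asserted by the lemma.
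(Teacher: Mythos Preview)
The paper does not supply its own proof of this lemma; it is quoted directly from \cite[Proposition 4.1]{ch22}. So there is no in-paper argument to compare against, and your outline should be measured against the standard approach one expects in \cite{ch22}. On that score, your overall strategy is the right one: BZ-filter $(\pi_2\times\sigma)|_{\G_n}$, apply the Leibniz rule, use cuspidality of $\sigma$ to reduce to the $\sigma^{(0)}=\sigma$ and $\sigma^{(2)}=\mathbbm 1$ branches, and invoke the first hypothesis $\sigma\notin\csuppline(\nu^{-1/2}\pi_1^\vee)$ to kill the $\sigma^{(0)}$ branch by a Bernstein-block argument. Up to here the proposal is sound.

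There are two genuine gaps in the remainder. First, your account of the second hypothesis is not correct. You say $\sigma\notin\csuppline(\pi_2)$ is needed so that ``the derivatives $\pi_2^{(a)}$ remain in a single Bernstein block'' and to prevent cross-contamination between layers; but once the $\sigma^{(0)}$ branch has been discarded, none of the surviving objects $\pi_2^{(a)}$ contain $\sigma$ in their cuspidal support at all, so this condition cannot be acting in the way you describe. Its actual function is upstream: it guarantees that within each $(\pi_2\times\sigma)^{(j)}$ the two Leibniz subquotients $\pi_2^{(j)}\times\sigma$ and $\pi_2^{(j-2)}$ lie in disjoint Bernstein components, so the Leibniz filtration \emph{splits} and discarding the $\sigma^{(0)}$ pieces produces an honest direct summand of $(\pi_2\times\sigma)|_{\G_n}$, not merely a subquotient of each graded layer.

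Second, your proposed endgame --- run a second BZ filtration on $\pi_1|_{\G_{n-1}}$ and match two spectral sequences --- is where the argument is incomplete, as you yourself acknowledge. Matching $E_1$- or $E_\infty$-pages as graded pieces does not by itself yield an isomorphism of abutments; you would need either a map of filtered objects inducing the comparison, or a separate degeneration argument on both sides. You have not indicated how either would go. The cleaner route, and the one taken in \cite{ch22}, avoids filtering the left-hand side altogether: after the splitting above, the surviving summand of $(\pi_2\times\sigma)|_{\G_n}$ has layers $\nu^{1/2}\pi_2^{(k)}\times\Pi_{k+1}$, and one uses the adjunction between $-\times\Pi_m$ and the (left) Bernstein--Zelevinsky derivative, together with projectivity of the Gelfand--Graev module $\Pi_m$, to identify $\Ext^i_{\G_n}$ of this object against $\pi_1^\vee$ directly with $\Ext^i_{\G_{n-1}}(\pi_1,\pi_2^\vee)$, rather than attempting a term-by-term comparison of two independent filtrations.
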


\subsubsection{Main Reduction Lemma}

The following lemma is Lemma 4.3 from \cite{ch22}, where it is originally stated for $\Hom$ spaces, but applies equally to $\Ext$ modules, as noted in \cite[Lemma 6.1]{qa25}. This result reduces the $\Ext$ branching problem to a calculation of $\Ext$ modules over the same group. Although we do not recall the proof here, we remark that it is a consequence of a coarse version of the Bernstein-Zelevinsky filtration introduced in the work \cite{ch22}.

\begin{lemma} \label{lemma for the key reduction step}
Let $\pi_1$ and $\pi_2$ be Arthur type representations of $\G_n$ and $\G_{n-1}$ respectively. Suppose that \[ \pi_1= u_{\rho_1}(a_1,b_1)\times u_{\rho_2}(a_2,b_2)\times \cdots \times u_{\rho_r}(a_r,b_r)\] and \[ \pi_2 = u_{\tau_1}(c_1,d_1)\times u_{\tau_2}(c_2,d_2)\times \cdots \times u_{\tau_s}(c_s,d_s)\] where $\rho_i$ ($i=1,2,\cdots,r$) and $\tau_j$ ($j=1,2,\cdots,s$) are unitary cuspidal representations. Further suppose that $a_1+b_1 \geq a_i + b_i$ and $a_1+b_1 \geq c_j + d_j$  for all $i=1,2,\cdots,r$ and $j=1,2,\cdots,s$. Let $\sigma$ be a unitary cuspidal representation of $\G_{a_1n(\rho_1)}$ such that $\nu^{1/2}\sigma\not\in \csupp_{\mathbb{Z}}(\pi_2)$ and $\sigma\not\in \csupp_{\mathbb{Z}}(\pi_1)$. Then for all integers $i\geq 0$ we have, \[ \Ext^i_{\G_{n-1}}(\pi_1,\pi_2) \cong \Ext^i_{\G_{n-1}} (u_{\rho_1}(a_1,b_1-1)\times (\sigma \times \pi_{1}^{\prime})|_{\G_{a}},\pi_2)  \] where, $a=n - n(\rho_1)a_1(b_1-1)-1$ and $\pi_{1}^{\prime} = u_{\rho_2}(a_2,b_2)\times \cdots \times u_{\rho_r}(a_r,b_r)$.

\end{lemma}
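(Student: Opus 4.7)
The plan is to project both representations $\pi_1|_{\G_{n-1}}$ and $u_{\rho_1}(a_1,b_1-1)\times(\sigma\times \pi_1')|_{\G_a}$ onto the Bernstein block of $\pi_2$ in $\Rep(\G_{n-1})$ and show that these two block-projections are isomorphic as $\G_{n-1}$-representations. Since $\Ext^i_{\G_{n-1}}(-,\pi_2)$ depends only on this block, the desired identification will follow immediately.

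To analyze the left-hand side, I would consider the Bernstein--Zelevinsky filtration from Section \ref{subsection on derivatives}, whose graded pieces are $\nu^{1/2}\pi_1^{(i+1)}\times \cind_{U_i}^{\G_i}\psi_i$. Applying the Leibniz-type product rule for BZ derivatives to $\pi_1 = u_{\rho_1}(a_1,b_1)\times \pi_1'$ and invoking Lapid--Minguez's identification $u_{\rho}(a,b)^{(n(\rho)a)}\cong \nu^{-1/2}u_\rho(a,b-1)$ for the highest derivative of a Speh representation, the maximality hypothesis $a_1+b_1\ge a_i+b_i$ and $a_1+b_1\ge c_j+d_j$ together with a cuspidal support analysis force the only graded piece with nontrivial projection to the block of $\pi_2$ to sit at the distinguished level $i+1 = n(\rho_1)a_1$. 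After cancelling the $\nu^{1/2}$ twist, this block-projection equals
\[
u_{\rho_1}(a_1,b_1-1)\times \nu^{1/2}\pi_1' \times \cind_{U_{n(\rho_1)a_1-1}}^{\G_{n(\rho_1)a_1-1}}\psi_{n(\rho_1)a_1-1}.
\]

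For the right-hand side, I would apply the BZ filtration to $(\sigma\times\pi_1')|_{\G_a}$ and then parabolically induce with $u_{\rho_1}(a_1,b_1-1)$. Since $\sigma$ is cuspidal of $\G_{a_1 n(\rho_1)}$, its derivatives are concentrated in degrees $0$ and $n(\sigma) = a_1 n(\rho_1)$, with $\sigma^{(n(\sigma))}\cong \mathbb{C}$ (the Whittaker model). The Leibniz rule then decomposes each $(\sigma\times \pi_1')^{(j+1)}$ into summands of two kinds: those retaining $\sigma$ in the cuspidal support, which are killed in the block of $\pi_2$ by the assumption $\nu^{1/2}\sigma\notin \csuppline(\pi_2)$, and the surviving piece at level $j+1 = n(\sigma)$, contributing $(\pi_1')^{(0)} = \pi_1'$. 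After parabolic induction with $u_{\rho_1}(a_1,b_1-1)$ the block-projected graded piece coincides exactly with the expression displayed above, which gives the needed identification.

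The main obstacle will lie in the Leibniz-rule bookkeeping together with the block-projection argument: the Leibniz rule for BZ derivatives of parabolic inductions yields only a filtration rather than a direct sum, so one has to verify that all spurious graded pieces disappear after projection to the block of $\pi_2$ and that the extensions among the surviving pieces collapse as well. This is precisely the content of the ``coarse BZ filtration'' technique introduced in \cite{ch22}; the maximality hypothesis on $(a_1,b_1)$ and the condition $\sigma\notin \csuppline(\pi_1)$ play exactly the roles needed to isolate a single surviving graded piece on each side.
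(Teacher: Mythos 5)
The paper does not actually prove this lemma: it quotes it from \cite[Lemma 4.3]{ch22} (stated there for $\Hom$) and \cite[Lemma 6.1]{qa25} (for $\Ext$), remarking only that it follows from the coarse Bernstein--Zelevinsky filtration of \cite{ch22}. Your overall architecture --- BZ filtration on both sides, the Leibniz rule, the Lapid--M\'inguez highest derivative $u_{\rho}(a,b)^{(n(\rho)a)}\cong\nu^{-1/2}u_{\rho}(a,b-1)$, and the fact that a cuspidal $\sigma$ has derivatives only in degrees $0$ and $n(\sigma)$ --- is indeed the strategy the paper is alluding to. However, two of your specific claims are wrong, and they are not mere bookkeeping. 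First, it is not true that a single graded piece survives on each side. On the left, every layer $i+1=n(\rho_1)a_1+k$ with $k\geq 0$ contains the Leibniz piece $u_{\rho_1}(a_1,b_1-1)\times\nu^{1/2}(\pi_1')^{(k)}\times\cind_{U_i}^{\G_i}\psi_i$, and these contribute to $\Ext^*(-,\pi_2)$ in general (this is exactly the non-tempered phenomenon the theorem is about). Correspondingly, on the right the layers $j+1>n(\sigma)$ contribute $u_{\rho_1}(a_1,b_1-1)\times\nu^{1/2}(\pi_1')^{(j+1-n(\sigma))}\times\cind_{U_j}^{\G_j}\psi_j$, which are not killed by the $\sigma$-condition. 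The whole point of writing the right-hand side as $u_{\rho_1}(a_1,b_1-1)\times(\sigma\times\pi_1')|_{\G_a}$ --- using that the restriction of a cuspidal $\sigma$ to $\G_{n(\sigma)-1}$ is the Gelfand--Graev representation --- is to package this entire family of surviving layers into one honest representation; if your single-layer claim were correct, the lemma would instead identify the left side with $\Ext^*(u_{\rho_1}(a_1,b_1-1)\times\nu^{1/2}\pi_1'\times\cind_{U_{n(\sigma)-1}}^{\G_{n(\sigma)-1}}\psi,\pi_2)$, which is a different (and generally false) statement.

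Second, projection to the Bernstein block of $\pi_2$ is too coarse a mechanism, both for killing the unwanted layers and for the final identification. The Gelfand--Graev factors $\cind_{U_i}^{\G_i}\psi_i$ have nonzero components in every Bernstein block of $\G_i$, so many ``spurious'' layers (including ones involving $\sigma$ or lower derivatives of $u_{\rho_1}(a_1,b_1)$) have nonzero projection to the block of $\pi_2$; moreover the hypotheses $\nu^{1/2}\sigma\notin\csuppline(\pi_2)$, $\sigma\notin\csuppline(\pi_1)$ and the maximality of $a_1+b_1$ are conditions on cuspidal $\nu^{\mathbb Z}$-lines, not on inertial classes, so they cannot force block projections to vanish. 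What the cited proof actually does is establish $\Ext$-vanishing of the unwanted layers against $\pi_2$ --- via the adjunction relating $-\times\cind_{U_i}^{\G_i}\psi_i$ to derivative functors (as in \cite{cs21}) together with a cuspidal-support/central-character comparison after second adjointness and K\"unneth --- and then matches the surviving layers of the two filtrations in corresponding pairs, assembling the isomorphism of $\Ext$-groups by d\'evissage through the long exact sequences of the filtrations. In particular the two block projections need not be isomorphic as $\G_{n-1}$-representations, so the first sentence of your plan already asks for more than is true; as written, the proposal does not constitute a proof, and the part you defer to ``coarse BZ filtration bookkeeping'' is precisely where the real content lies.
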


\subsection{Proof of Theorem \ref{theorem on non tempered ext branching}}

We now give the proof of Theorem \ref{theorem on non tempered ext branching}. The initial part proceeds along the same lines as in \cite[Theorem 8.1]{qa25} and so we only sketch some of those steps referring to \cite{qa25} for the details.

\begin{theorem} 
Let $\pi_1$ and $\pi_2$ be irreducible Arthur type representations of $\G_n$ and $\G_{n-1}$ respectively. If $\pi_1$ and $\pi_2$ are strong $\Ext$ relevant then, \[ \Ext^i_{\G_{n-1}}(\pi_1,\pi_2)\neq 0 \] for some integer $i\geq 0$.
\end{theorem}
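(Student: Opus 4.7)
The plan is to prove the non-vanishing $\mathrm{Ext}^i_{\G_{n-1}}(\pi_1, \pi_2) \neq 0$ by strong induction on a complexity measure of the strong Ext relevance data of $(\pi_1, \pi_2)$, for instance the total sum $\sum n(\rho_\bullet) a_\bullet b_\bullet$ taken over all Speh factors in the four families $\{\pi_{m,i}\}, \{\pi_{p,j}\}, \{\pi_{n,k}\}, \{\pi_{q,l}\}$ appearing in Definition \ref{definition of strong ext relevance}. This measure strictly decreases whenever a Speh $u_\rho(a,b)$ is replaced by its highest derivative $u_\rho(a, b-1)$.

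The base case is when every Speh factor is already tempered ($b_\bullet = 1$) or anti-tempered ($a_\bullet = 1$); in that regime $\pi_1$ and $\pi_2$ are products of discrete series and their Aubert duals, and the second remark following Definition \ref{definition of strong ext relevance} records that strong Ext relevance is in fact necessary and sufficient for Ext branching, so the non-vanishing is already available from \cite{qa25}. The main computational inputs are the K\"unneth formula (Theorem \ref{kunneth formula}) and Nori--Prasad duality (Theorem \ref{duality theorem of nori and prasad}).

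For the inductive step, pick a Speh factor $\omega = u_\rho(a,b)$ with $a,b \geq 2$ and $a+b$ maximal (so $\omega$ is neither tempered nor anti-tempered). Apply Lemma \ref{lemma for the key reduction step}: up to introducing a cuspidal $\sigma$ outside the cuspidal lines of $\pi_1$ and $\pi_2$ and a restriction to a lower-rank group, the factor $\omega$ can be replaced by $u_\rho(a, b-1)$. Analyse the arising restriction $(\sigma \times \pi_1')|_{\G_a}$ via the Bernstein--Zelevinsky filtration of Section \ref{subsection on derivatives}, and use the transfer lemma (Lemma \ref{transfer lemma}) when needed to rewrite the branching Ext as a within-group Ext. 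Tracking the operation through the Arthur-parameter bookkeeping in Definition \ref{definition of strong ext relevance} then exhibits a strictly smaller pair $(\widetilde{\pi}_1, \widetilde{\pi}_2)$ which is again strong Ext relevant, and the inductive hypothesis supplies some $j \geq 0$ with $\mathrm{Ext}^j(\widetilde{\pi}_1, \widetilde{\pi}_2) \neq 0$.

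It remains to reinstate the peeled-off Speh factor. By the maximality of $a+b$ for $\omega$, the cuspidal supports of $\widetilde{\pi}_1$ and $\widetilde{\pi}_2$ satisfy the conditions defining $\mathcal C_\omega$, so Theorem \ref{theorem on injective mapping on ext modules} gives $\mathrm{Ext}^j(\omega \times \widetilde{\pi}_1, \omega \times \widetilde{\pi}_2) \neq 0$, which by the reduction and transfer steps is equivalent to the desired non-vanishing. The two main obstacles I anticipate are the following. First, one must verify that strong Ext relevance is genuinely preserved under the reduction, especially when the chosen $\omega$ sits in one of the ``dualised'' blocks involving $D(\pi_{n,i}^-)$ or $D(\pi_{q,j}^-)$; here one has to pass through the Aubert involution via Theorem \ref{Aubert-Zelevinsky Dual of a Speh Representation} and Theorem \ref{duality theorem of nori and prasad} before invoking Lemma \ref{lemma for the key reduction step}. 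Second, the restriction $(\sigma \times \pi_1')|_{\G_a}$ is only locally finitely generated, so the embedding theorem must be applied in its completed form (Theorem \ref{theorem on extensions and parabolic induction}) on $\widehat{\mathcal C}(\mathcal J)$, and the identification between Ext computed on $\widehat{\mathcal C}(\mathcal J)$ and Ext computed on the Hecke algebra provided by Corollary \ref{cor non-vanishing} is what makes this transfer legitimate.
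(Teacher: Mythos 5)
Your plan reproduces the paper's architecture (induction, the reduction Lemma \ref{lemma for the key reduction step} with an auxiliary cuspidal $\sigma$, the transfer Lemma \ref{transfer lemma}, completed categories together with Corollary \ref{cor non-vanishing} to handle the locally finitely generated restriction, and the embedding theorem to reinstate the peeled Speh factor), but the inductive step as you set it up is not licensed by the lemma you invoke. Lemma \ref{lemma for the key reduction step} requires the peeled factor $u_{\rho_1}(a_1,b_1)$ to satisfy $a_1+b_1\geq a_i+b_i$ and $a_1+b_1\geq c_j+d_j$ for \emph{every} factor of both $\pi_1$ and $\pi_2$, including the tempered ($b=1$) and anti-tempered ($a=1$) ones; your $\omega$ is chosen with $a,b\geq 2$ and $a+b$ maximal only among such factors, so whenever a tempered or anti-tempered factor is strictly larger you can neither apply the lemma to your $\omega$ nor fall back on your base case (which assumes \emph{all} factors are tempered or anti-tempered). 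The paper sidesteps this by always peeling the globally maximal factor regardless of its shape (after using Lemma \ref{transfer lemma} to place it on the $\G_n$ side) and by taking the all-cuspidal, i.e.\ generic, case as the base of the induction; note also that the transfer lemma only swaps which group carries the maximal factor --- the passage from a branching $\Ext$ to an $\Ext$ between two $\G_{n-1}$-representations is achieved by the reduction lemma, not by the transfer lemma.

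The more serious gap is the dualized case, which you only flag as an anticipated obstacle. After the reduction one faces $\Ext^i_{\G_{n-1}}(\pi_{1,1}^-\times \Sigma_{\mathfrak s_0},\, D(\pi_{1,1}^-)\times \pi_2'')$, and the difficulty is not that strong $\Ext$ relevance fails to be preserved (it is, by the parameter bookkeeping), but that the embedding theorem requires the \emph{same} Speh representation $\omega$ on both sides, whereas here the two sides carry $\pi_{1,1}^-$ and $D(\pi_{1,1}^-)$. The paper resolves this by applying Nori--Prasad duality (Theorem \ref{duality theorem of nori and prasad}) \emph{after} the reduction, to the within-group $\Ext$ --- legitimate because $\Sigma_{\mathfrak s_0}$ is finitely generated and $\pi_2''$ has finite length, so the relevant $\Ext$-spaces are finite-dimensional --- thereby replacing the target by $\pi_{1,1}^-\times D(\pi_2'')$ so that $\omega=\pi_{1,1}^-$ is common to both arguments; the induction hypothesis is dualized in the same way to produce $\Ext^*(D(\pi_2''),\Sigma_{\mathfrak s_0})\neq 0$ before Corollary \ref{corollary on nonzeroness 1} is applied, and one then dualizes back. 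Your suggestion to pass through the Aubert involution \emph{before} invoking the reduction lemma is not obviously workable, since Theorem \ref{duality theorem of nori and prasad} needs an irreducible representation in the first argument while $\pi_1|_{\G_{n-1}}$ is neither irreducible nor of finite length; in any case this step, which is where the genuine extra work of the theorem lies, is left unproved in your proposal.
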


\begin{proof}

Suppose $\pi_1 = \pi_{1,1} \times \pi_{1,2} \times \cdots \times \pi_{1,r}$ and 
$\pi_2 = \pi_{2,1} \times \pi_{2,2} \times \cdots \times \pi_{2,s}$ where,
\begin{align*}
\pi_{1,i} &= u_{\rho_i}(a_i,b_i) \quad \text{for } i = 1,\ldots,r, \\
\pi_{2,j} &= u_{\tau_j}(c_j,d_j) \quad \text{for } j = 1,\ldots,s,
\end{align*} where the $\rho_i$ ($i=1,2,\cdots,r$) and $\tau_j$ ($j=1,2,\cdots,s$) are some cuspidal representations. 

Given that $\pi_1$ and $\pi_2$ are strong $\operatorname{Ext}$-relevant, we proceed by induction on $m(\pi_1,\pi_2)$ which denotes the number of non-cuspidal factors of $\pi_1$ and $\pi_2$. The base case is when $m(\pi_1,\pi_2)=0$, in which case both $\pi_1$ and $\pi_2$ are generic and by \cite[Theorem 3]{pr93} we conclude that $\Hom_{\G_{n-1}}(\pi_1,\pi_2)\neq 0$. We now assume that the theorem holds for all integers less than $m(\pi_1,\pi_2)$.

Given that $\pi_1$ and $\pi_2$ are irreducible, we can permute their factors to arrange that one of the following cases occurs. We can ensure that either:
\begin{enumerate}
    \item $a_1+b_1\geq a_i+b_i$ and $a_1+b_1\geq c_j+d_j$ for all $i$ and $j$.
    \item $c_1+d_1\geq c_j+d_j$ and $c_1+d_1\geq a_i+b_i$ for all $i$ and $j$.
\end{enumerate}

\textbf{Reduction Step :} By an application of the transfer lemma, Lemma \ref{transfer lemma} arguing similarly as in Case 2 of the proof of \cite[Theorem 7.1]{qa25} we can without loss of generality assume that the first case above occurs.

\textbf{Back to the proof:} Hence without loss of generality let us suppose that $a_1+b_1\geq a_i+b_i$ and $a_1+b_1\geq c_j+d_j$ for all $i$ and $j$. Choose a unitary cuspidal representation $\sigma$ of $\G_2$ such that $\nu^{1/2}\sigma\not \in \csuppline(\pi_2)$ and $\sigma\not \in \csuppline(\pi_1)$. Then by Lemma \ref{lemma for the key reduction step} we have that for all integers $i\geq 0$, \begin{equation} \label{equation one in proof of if direction}
\Ext^i_{\G_{n-1}}(\pi_1,\pi_2) = \Ext^i_{\G_{n-1}} (\pi_{1,1}^{-}\times (\sigma \times \pi_{1}^{\prime})|_{\G_{a}},\pi_2)  \end{equation} where, $a=n - n(\rho_1)a_1(b_1-1)-1$ and $\pi_{1}^{\prime} = u_{\rho_2}(a_2,b_2)\times \cdots \times u_{\rho_r}(a_r,b_r)$. Here $\pi_{1,1}^{-} = u_{\rho_1}(a_1,b_1)^- = u_{\rho_1}(a_1,b_1-1)$.

Now since the representations $\pi_1$ and $\pi_2$ are strong $\Ext$ relevant one of the following two cases must occur. \begin{enumerate}
    \item [(A)] $\pi_{2,m} = \pi_{1,1}^-$ for some $m=1,2,\cdots,s$. In this case $\pi_2 = \pi_{1,1}^- \times \pi_2^{\prime}$ where, $\pi_2^{\prime} = \pi_{2,1} \times\cdots \times \pi_{2,m-1} \times \pi_{2,m+1} \times \cdots \times \pi_{2,s}$. Moreover, the representations $\pi_{1}^{\prime}$ and $\pi_2^{\prime}$ are strong $\Ext$ relevant.
    \item [(B)] $\pi_{2,m} = D(\pi_{1,1}^-)$ for some $m=1,2,\cdots,s$. In this case $\pi_2 = D(\pi_{1,1}^-) \times \pi_2^{\prime\prime}$ where, $\pi_2^{\prime\prime} = \pi_{2,1} \times\cdots \times \pi_{2,m-1} \times \pi_{2,m+1} \times \cdots \times \pi_{2,s}$. Moreover, the representations $\pi_{1}^{\prime}$ and $\pi_2^{\prime\prime}$ are strong $\Ext$ relevant.
\end{enumerate}

\textbf{Case (A):} Let us suppose that Case (A) above occurs, that is, $\pi_2 = \pi_{1,1}^- \times \pi_2^{\prime}$, and the representations $\pi_{1}^{\prime}$ and $\pi_2^{\prime}$ are strong $\Ext$ relevant. Since $\pi_{1}^{\prime}$ and $\pi_2^{\prime}$ are strong $\Ext$ relevant we have that the representations $\sigma\times \pi_{1}^{\prime}$ and $\pi_2^{\prime}$ are strong $\Ext$ relevant. By the induction hypothesis we conclude that, 
\begin{align} \label{equation one point five in proof of if direction}
     \Ext^*_{\G_a}(\sigma\times \pi_{1}^{\prime},\pi_2^{\prime})\neq 0.
\end{align}

By \cite{as20} we know that $(\sigma \times \pi_{1}^{\prime})|_{\G_{a}}$ is locally finitely generated, that is, each Bernstein component of $(\sigma \times \pi_{1}^{\prime})|_{\G_{a}}$ is finitely generated. We denote $\Sigma = (\sigma\times \pi_{1}^{\prime})|_{\G_{a}}$. Let $\mathfrak s_0$ denote the inertial class of $\pi_2^{\prime}$. We see that only Bernstein component of the class $\mathfrak s_0$ contributes to the above $\Ext$ module with other components making no contribution. So we conclude that, \[ \Ext^*_{\G_a}(\Sigma_{\mathfrak s_0},\pi_2^{\prime})\neq 0. \]

Let $\mathcal J_0$ be the maximal ideal annihilating $\pi_2^{\prime}$ in the center $\mathcal Z$ corresponding to Hecke algebra $\mathcal H$ for the Bernstein component of the class $\mathfrak s_0$. By Corollary \ref{cor non-vanishing} on taking completion we have that, \begin{equation} \label{equation two in proof of if direction}
\Ext^*_{\widehat{\mathcal C}(\mathcal J_0)}(\widehat{\Sigma_{\mathfrak s_0}},\pi_2^{\prime})\neq 0.    
\end{equation}  

Returning back to Equation \ref{equation one in proof of if direction} by applying second adjointness, taking the opposite Jacquet module of $\pi_2 = \pi_{1,1}^- \times \pi_2^{\prime}$ suppose that we get a composition factor of the form $\Psi_1 \otimes \Psi_2 $. Then by applying the Kunneth formula we can check that it contributes to the RHS of Equation \ref{equation one in proof of if direction} only if $\csupp(\Psi_1)= \csupp (\pi_{1,1}^-)$ and $\csupp(\Psi_2)= \csupp (\pi_2^{\prime})$. Hence $\Ext^i_{\G_{n-1}} (\pi_{1,1}^{-}\times \Sigma_{\mathfrak {s}},\pi_{1,1}^- \times \pi_2^{\prime}) = 0$ for any inertial class $\mathfrak s$ not equal to $\mathfrak s_0$.

Hence have that, \[ \Ext^i_{\G_{n-1}}(\pi_1,\pi_2) = \oplus_{\mathfrak s} \Ext^i_{\G_{n-1}} (\pi_{1,1}^{-}\times \Sigma_{\mathfrak {s}},\pi_{1,1}^- \times \pi_2^{\prime}) = \Ext^i_{\G_{n-1}} (\pi_{1,1}^{-}\times \Sigma_{\mathfrak {s_0}},\pi_{1,1}^- \times \pi_2^{\prime}). \] Applying Corollary \ref{cor non-vanishing} once again, we then conclude by taking completions that,
\begin{equation} \label{equation three in proof of if direction}
\Ext^i_{\widehat{\mathcal C}(\mathcal J_0')}(\pi_1,\pi_2) = \Ext^i_{\widehat{\mathcal C}(\mathcal J_0')} (\pi_{1,1}^{-}\times \widehat{\Sigma_{\mathfrak {s_0}}},\pi_{1,1}^- \times \pi_2^{\prime}).    
\end{equation} Note here that we use the fact that the completion commutes with induction (see e.g. Lemma \ref{lemma on commutation of completion and induction}).

By Corollary \ref{corollary on nonzeroness 1}, Equation \ref{equation two in proof of if direction} and Equation \ref{equation three in proof of if direction} we conclude that \[ \Ext^i_{\widehat{\mathcal C}(\mathcal J_0')}(\pi_1,\pi_2)\neq 0 \] for some integer $i\geq 0$. 

\textbf{Case (B):} On the other hand suppose that Case (B) occurs. In this case, $\pi_2 = D(\pi_{1,1}^-) \times \pi_2^{\prime\prime}$, and the representations $\pi_{1}^{\prime}$ and $\pi_2^{\prime\prime}$ are strong $\Ext$ relevant. In this case we have that \begin{equation} \label{equation four in proof of if direction}
\Ext^i_{\G_{n-1}}(\pi_1,\pi_2) = \Ext^i_{\G_{n-1}} (\pi_{1,1}^{-}\times \Sigma_{\mathfrak {s_0}},D(\pi_{1,1}^-) \times \pi_2^{\prime}).    
\end{equation}

By the duality theorem of Nori and Prasad \cite{np20} to show that $\Ext^*_{\G_{n-1}}(\pi_1,\pi_2)\neq 0$ it is enough to show that, 
\begin{equation} \label{equation reuction in proof of if}
    \Ext^*_{\G_{n-1}} (\pi_{1,1}^- \times D(\pi_2^{\prime}), \pi_{1,1}^{-}\times \Sigma_{\mathfrak {s_0}})\neq 0. 
\end{equation} Applying the duality theorem of Nori and Prasad to equation \ref{equation one point five in proof of if direction} we have that, \[ \Ext^*_{\G_{a-1}} ( D(\pi_2^{\prime}),  \Sigma_{\mathfrak {s_0}}) \neq 0. \]
Note that $\mathrm{Ext}^{i'}_{\mathrm G_{a-1}}(\Sigma_{\mathfrak s_0}, \pi_2')$ is finite-dimensional for all $i'$ as $\Sigma_{\mathfrak s_0}$ is finitely-generated and $\pi_2'$ is of finite length as $\mathrm{G}_{a-1}$-representations. So the ones obtained by the duality theorem are also finite-dimensional. We let $\mathcal J_0$ be the maximal ideal as in Case (A). Now, by Corollary \ref{cor non-vanishing},
\begin{equation} \label{equation five in proof of if direction}
     \mathrm{Ext}^*_{\widehat{\mathcal C}(\mathcal J_0)}(D(\pi_2'), \widehat{\Sigma_{\mathfrak s_0}}) \neq 0 .
\end{equation} Combining the above two equations and Corollary \ref{corollary on nonzeroness 1} we conclude that \[ \Ext^*_{\widehat{\mathcal C}(\mathcal J_0')}(\pi_{1,1}^- \times D(\pi_2^{\prime}),  \pi_{1,1}^{-}\times \widehat{\Sigma_{\mathfrak {s_0}}})\neq 0. \]
Now one also has that the above Ext-groups are finite-dimensional. By Corollary \ref{cor non-vanishing} again, we obtain equation \ref{equation reuction in proof of if} as desired.
\end{proof}

\section{Appendix: Commutativity of Completion and Induction} \label{section - commutativity of completion and induction} Let $n_1,n_2$ be positive integers and set $n=n_1+n_2$. Let $\pi_1$ and $\pi_2$ be finitely generated modules of $\mathcal H_{n_1}$ and $\mathcal H_{n_2}$ respectively. Consider the induced $\mathcal H_n$ module defined as, \[ \pi = \mathcal{H}_n \otimes_{\mathcal H_{n_1} \otimes \mathcal H_{n_2}} (\pi_1 \otimes \pi_2). \] Let $\mathcal J_1$ and $\mathcal J_2$ be maximal ideals in the centers $\mathcal Z_1$ and $\mathcal Z_2$ of $\mathcal H_{n_1}$ and $\mathcal H_{n_2}$ respectively. We let $\mathcal{J}_0 = \langle \mathcal J_1 \otimes \mathcal Z_2,\mathcal Z_1 \otimes \mathcal J_2 \rangle$ denote the ideal generated by $\mathcal J_1$ and $\mathcal J_2$ in $\mathcal Z_1 \otimes \mathcal Z_2$. We know that $\mathcal Z_1 \otimes \mathcal Z_2$ contains the center $\mathcal Z$ of $\mathcal H_n$. We set $\mathcal J = \mathcal J_0 \cap \mathcal Z$, that is, $\mathcal J$ is the ideal generated by $\mathcal J_1$ and $\mathcal J_2$ inside $\mathcal Z$.

\begin{lemma} \label{lemma on commutation of completion and induction}
Let $\widehat{\pi_1}$ and $\widehat{\pi_2}$ denote the $\mathcal J_{1}$-adic and $\mathcal J_{2}$-adic completions of $\pi_1$ and $\pi_2$ respectively. Let $\widehat{\pi}$ be the $\mathcal J$-adic completion of the module $\pi$. As $\mathcal{H}_n$ modules we have the isomorphism, \[ \widehat{\pi} = \mathcal{H}_n \otimes_{\mathcal H_{n_1} \otimes \mathcal H_{n_2}} (\widehat{\pi_1} \otimes \widehat{\pi_2}). \]
\end{lemma}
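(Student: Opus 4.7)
The plan is to reduce the claim to commutative algebra over the center $\mathcal Z$ of $\mathcal H_n$, and then to exploit the fact that $\mathcal Z$ is central in $\mathcal H_n$ in order to commute parabolic induction with completion.

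First, I would pass to finitely generated modules over the centers. Since affine Hecke algebras of type $A$ are finite modules over their centers, each $\pi_i$ is finitely generated as a $\mathcal Z_i$-module. Because $\mathcal H_n$ is free of finite rank (namely $\binom{n}{n_1}$) as a right $\mathcal H_{n_1}\otimes \mathcal H_{n_2}$-module, the induced module $\pi$ is finitely generated over $\mathcal H_n$ and hence over $\mathcal Z$. For finitely generated modules over Noetherian commutative rings, $I$-adic completion agrees with base change to the completed ring, so
\[ \widehat{\pi_i}\;\cong\;\widehat{\mathcal Z_i}\otimes_{\mathcal Z_i}\pi_i, \qquad \widehat{\pi}\;\cong\;\widehat{\mathcal Z}\otimes_{\mathcal Z}\pi. \]

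Second, I would compare the two completions of $\mathcal Z_1\otimes \mathcal Z_2$. Explicitly, $\mathcal Z = \mathbb C[y_1^{\pm 1},\ldots,y_n^{\pm 1}]^{S_n}$ sits inside $\mathcal Z_1\otimes \mathcal Z_2 = \mathbb C[y_1^{\pm 1},\ldots,y_n^{\pm 1}]^{S_{n_1}\times S_{n_2}}$ as a finite ring extension, and the $\mathcal J_0$-adic topology and the $\mathcal J(\mathcal Z_1\otimes \mathcal Z_2)$-adic topology coincide on the $\mathcal J_0$-local piece. This yields $\widehat{\mathcal Z_1}\otimes \widehat{\mathcal Z_2}\cong \widehat{\mathcal Z}\otimes_{\mathcal Z}(\mathcal Z_1\otimes \mathcal Z_2)$ on the relevant local component, and combining with the first step gives
\[ \widehat{\pi_1}\otimes \widehat{\pi_2}\;\cong\;\widehat{\mathcal Z}\otimes_{\mathcal Z}(\pi_1\otimes \pi_2). \]

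Third, centrality of $\mathcal Z$ in $\mathcal H_n$ produces the key commutation. For any commutative $\mathcal Z$-algebra $A$ and any $\mathcal H_{n_1}\otimes \mathcal H_{n_2}$-module $N$, the assignment $h\otimes(a\otimes v)\mapsto a\otimes(h\otimes v)$ defines an isomorphism
\[ \mathcal H_n\otimes_{\mathcal H_{n_1}\otimes \mathcal H_{n_2}}\!\bigl(A\otimes_{\mathcal Z}N\bigr)\;\cong\; A\otimes_{\mathcal Z}\bigl(\mathcal H_n\otimes_{\mathcal H_{n_1}\otimes \mathcal H_{n_2}}N\bigr), \]
whose well-definedness uses precisely that $\mathcal Z$ commutes with every element of $\mathcal H_n$. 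Taking $A=\widehat{\mathcal Z}$ and $N=\pi_1\otimes \pi_2$ and chaining the three steps together gives the desired $\mathcal H_n\otimes_{\mathcal H_{n_1}\otimes \mathcal H_{n_2}}(\widehat{\pi_1}\otimes \widehat{\pi_2})\cong \widehat{\pi}$.

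The main obstacle lies in the second step. Although $\mathcal Z\subset \mathcal Z_1\otimes \mathcal Z_2$ is a finite ring extension, there may be several maximal ideals of $\mathcal Z_1\otimes \mathcal Z_2$ lying over $\mathcal J$ (corresponding to distinct $S_{n_1}\times S_{n_2}$-orbits within the $S_n$-orbit attached to $\mathcal J$), and one must verify that the individual $\mathcal J_i$-adic completions of the factors isolate exactly the $\mathcal J_0$-contribution inside $\pi_1\otimes \pi_2$. Once this support bookkeeping is pinned down, the first and third steps are formal.
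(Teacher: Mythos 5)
Your overall strategy is the same as the paper's: your steps 1 and 3 (finiteness of $\mathcal H_{n_i}$ over $\mathcal Z_i$, finite freeness of $\mathcal H_n$ over $\mathcal H_{n_1}\otimes\mathcal H_{n_2}$, and centrality of $\mathcal Z$ to commute induction with completion/base change) correspond exactly to the paper's observation that $\mathcal J$ is central, so $\mathcal J^k\pi=\mathcal H_n\otimes_{\mathcal H_{n_1}\otimes\mathcal H_{n_2}}\mathcal J^k(\pi_1\otimes\pi_2)$ and the finite-rank free induction passes through the inverse limit, giving $\widehat{\pi}\cong\mathcal H_n\otimes_{\mathcal H_{n_1}\otimes\mathcal H_{n_2}}(\widehat{\pi_1\otimes\pi_2})$. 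The problem is your step 2: you assert the identification of $\widehat{\pi_1\otimes\pi_2}$ (the $\mathcal J$-adic completion) with $\widehat{\pi_1}\otimes\widehat{\pi_2}$, and then in your final paragraph you yourself flag it as the ``main obstacle'' and leave the support bookkeeping unverified. That bookkeeping is precisely the mathematical content of the lemma; it is the one step the paper supplies, by invoking Lusztig [Lu89, 7.5(a)] together with the Chinese remainder theorem to compare the completion of the Laurent-polynomial part $\mathcal A_n$ with $\widehat{\mathcal A_{n_1}}\otimes\widehat{\mathcal A_{n_2}}$ and deduce $\widehat{\pi_1\otimes\pi_2}\cong\widehat{\pi_1}\otimes\widehat{\pi_2}$. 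A proposal that stops at ``once this is pinned down the rest is formal'' has not proved the lemma.

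Moreover, the concern you raise is genuine, not cosmetic. The $\mathcal J$-adic completion of $\pi_1\otimes\pi_2$ decomposes over all maximal ideals of $\mathcal Z_1\otimes\mathcal Z_2$ lying over $\mathcal J$ that meet the support of $\pi_1\otimes\pi_2$ (all $S_{n_1}\times S_{n_2}$-orbits inside the $S_n$-orbit attached to $\mathcal J$), not only over $\mathcal J_0$; already for the regular modules $\pi_i=\mathcal H_{n_i}$ with a regular orbit, the two sides of your step 2 have different ranks, so the identification cannot hold without using extra information about the supports. There is also a second point your write-up elides: $\varprojlim\,(\pi_1/\mathcal J_1^k\pi_1)\otimes(\pi_2/\mathcal J_2^k\pi_2)$ is a completed tensor product, which differs from the algebraic $\widehat{\pi_1}\otimes\widehat{\pi_2}$ when both completions are infinite-dimensional. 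Both difficulties disappear in the situation where the lemma is applied in the paper (one factor is of finite length, hence finite-dimensional as a Hecke module and supported only at $\mathcal J_1$, so the only relevant maximal ideal over $\mathcal J$ in the support is $\mathcal J_0$ and the tensor-product subtlety is vacuous), but an argument of this kind, or an explicit hypothesis guaranteeing it, is exactly what is missing; as written, your chain of isomorphisms computes a completion that can be strictly larger than $\mathcal H_n\otimes_{\mathcal H_{n_1}\otimes\mathcal H_{n_2}}(\widehat{\pi_1}\otimes\widehat{\pi_2})$.
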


\begin{proof}
This lemma is quite standard in the context of graded Hecke algebras. We provide some details for the convenience for the reader. Let $\mathcal A_{n_i}=\mathbb C[y_1^{\pm 1}, \ldots, y_{n_i}^{\pm 1}]$ for $i=1,2$, and let $\mathcal A_n=\mathbb C[y_1^{\pm 1}, \ldots, y_n^{\pm 1}]$. Let $\widehat{\mathcal A_{n_1}}, \widehat{\mathcal A_{n_2}}$ and $\widehat{\mathcal A_n}$ be the $\mathcal J_1$-adic, $\mathcal J_2$-adic and $\mathcal J$-adic completions of $\mathcal A_{n_1}$, $\mathcal A_{n_2}$ and $\mathcal A_n$ respectively. We similarly use the notations for $\widehat{\pi_1}, \widehat{\pi_2}, \widehat{\pi_1\otimes \pi_2}$.

It follows from \cite[7.5(a)]{lu89} (and the Chinese remainder theorem) for $\mathcal A_n$, we have 
\[   \widehat{\mathcal A_n} \cong \widehat{\mathcal A_{n_1}} \otimes \widehat{\mathcal A_{n_2}} .
\]
This accordingly gives that $\widehat{\pi_1\otimes \pi_2} \cong \widehat{\pi_1}\otimes \widehat{\pi_2}$. Now, as $\mathcal J$ is in $\mathcal Z$ and is also in the center of $\mathcal H_{n_1}\otimes \mathcal H_{n_2}$, $\mathcal J$ commutes with $\mathcal H_n$ action and so we also have
\[  \widehat{\pi} \cong \mathcal H_n \otimes_{\mathcal H_{n_1} \otimes \mathcal H_{n_2}} (\widehat{\pi_1 \otimes \pi_2}) .
\]
Combining above isomorphisms, we obtain the lemma.
\end{proof}


\vskip 15pt

\end{document}